\newcommand{\R}{\mathbb R}
\newcommand{\C}{\mathbb C}
\newcommand{\Q}{\mathbb Q}
\newcommand{\Z}{\mathbb Z}
\newcommand{\N}{\mathbb N}
\newtheorem{theorem}{Theorem}[section]
\newtheorem{lemma}[theorem]{Lemma}
\newtheorem{definition}[theorem]{Definition}
\newtheorem{example}[theorem]{Example}
\newtheorem{proposition}[theorem]{Proposition}
\newtheorem{corollary}[theorem]{Corollary}
\newtheorem{conjecture}[theorem]{Conjecture}
\newtheorem{remark}[theorem]{Remark}
\newenvironment{acknowledgement}[1][Acknowledgements]
{\begin{trivlist} \item[\hskip \labelsep {\bfseries #1}]}
{\end{trivlist}}
\begin{document}
\title{Semi-topological Galois theory and the inverse Galois problem}
\author{Hsuan-Yi Liao, Jyh-Haur Teh}
\date{}

\maketitle

\begin{abstract}
We enhance the analogy between field extensions and covering spaces
by introducing the concept of splitting covering which
correspondences to the splitting field in Galois theory. We define
semi-topological Galois groups for Weierstrass polynomials and prove
the existence of a Galois correspondence. This new tool enables us
to study the inverse Galois problem from a new viewpoint.
\end{abstract}

\section{Introduction}
It is well known that there is a Galois correspondence between
subgroups of the fundamental group of a topological space $X$ and
covering spaces over it which is analogous to the Galois
correspondence between field extensions and Galois groups. To make
the analogy stronger, we study some covering spaces defined by some
polynomials in $\mathcal{C}(X)[z]$. We introduce the splitting
coverings for Weierstrass polynomials and define the
semi-topological Galois groups for them. We have the following
correspondences:
$$
\begin{array}{ccc}
fields & \longleftrightarrow & topological \ \ spaces\\
splitting \ \ fields & \longleftrightarrow  & splitting \ \ coverings\\
extension \ \ fields & \longleftrightarrow & covering \ \ spaces\\
Galois \ \ extensions & \longleftrightarrow & Galois \ \ covering \ \ spaces\\
separable \ \ polynomials & \longleftrightarrow & Weierstrass \ \ polynomials\\
Galois \ \ groups & \longleftrightarrow & semi-topological \ \ Galois \ \ groups \\
algebraic \ \ closures & \longleftrightarrow & universal \ \ coverings\\
\end{array}$$

A Weierstrass polynomial $f$ in $\mathcal{C}(X)[z]$ is a polynomial
such that each root is of degree one. We construct its splitting
covering $p:E_f \rightarrow X$ and show that it is smallest among
covering spaces that $f$ splits. We define the semi-topological
Galois group of $f$ and show that it is isomorphic to the group of
covering transformations of $E_f \rightarrow X$. The proof uses a
result of Chase-Harrison-Rosenberg about the existence of Galois
correspondence in commutative rings. In the final section, we use
semi-topological Galois groups to study the inverse Galois problem.
We gives some criterion such that a finite group can be realized as
the Galois group of some field extension over $\Q(i)$.

\section{Algebraic closures and universal coverings}
Throughout this article, unless otherwise stated, $X,Y,Z$ will
denote topological spaces which are Hausdorff, path-connected,
locally path-connected and semi-locally simply connected. Let $
\mathcal{C}(X)$ be the ring of all continuous functions from $X$ to
$\C$ and $f=f_{x}(z)=a_{n}(x)z^{n}+a_{n-1}(x)z^{n-1}+\cdots
+a_{0}(x)$ be an element in $\mathcal{C}(X)[z]$, the polynomial ring
with coefficients in $\mathcal{C}(X)$. In general, there may no
exist a continuous function $\alpha:X \rightarrow \C$ such that
$f_x(\alpha(x))=0$ for all $x\in X$. For example, on the unit sphere
$S^1$, there is no continuous function in $\mathcal{C}(S^1)$ which
satisfies the equation $z^2-x=0$.

\begin{definition}
Let $n \in \N$ and $f=f_{x}(z)=z^{n}+a_{n-1}(x)z^{n-1}+\cdots
+a_{0}(x) \in \mathcal{C}(X)[z]$. $f$ is called a
\textbf{Weierstrass polynomial} of degree $n$ on $X$ if for each $x
\in X$, $f_{x}$ has distinct $n$ roots. For such $f$, $E=\lbrace
(x,z) \in X \times \C : f_{x}(z)=0  \rbrace$ is called the
\textbf{solution space} of $f$. A \textbf{root} of $f$ is a continuous
function $\alpha: X \rightarrow \C$ such that $f_x(\alpha(x))=0$ for
all $x\in X$. We say that $f$ \textbf{splits} in $X$ if $f$ has $n$
distinct roots in $X$. A Weierstrass polynomial is called
\textbf{irreducible} if it is irreducible as an element in the ring
$\mathcal{C}(X)[z]$.
\end{definition}

It is well known that the solution space of a Weierstrass polynomial
under the first projection is a covering space over $X$, and the
solution space of a Weierstrass polynomial is connected if and only
if the Weierstrass polynomial is irreducible(\cite[Theorem 4.2, pg 141]{Ha}).

Since a Weierstrass polynomial $f\in \mathcal{C}(X)[z]$ may no have
solutions in $X$, it is natural to ask if we can find solutions of
$f$ in some covering spaces over $X$. This is analogous to finding
roots of a polynomial in some field extensions in Galois theory. We
will see soon that the universal cover of $X$ plays the role of
algebraic closure.

\begin{definition}
Let $\lambda:Y \rightarrow X$ be a continuous map. The \textbf{pullback}
$\lambda^{*}:\mathcal{C}(X) \rightarrow \mathcal{C}(Y)$ is defined
by
$$\lambda^{*}(\gamma):=\gamma \circ \lambda$$ which induces a ring
homomorphism $\lambda^{*}:\mathcal{C}(X)[z] \rightarrow
\mathcal{C}(Y)[z]$ by
$$\lambda^{*}(a_{n}z^{n}+a_{n-1}z^{n-1}+\cdots +a_{0}):=(a_{n}\circ\lambda
)z^{n}+(a_{n-1}\circ\lambda)z^{n-1}+\cdots +(a_{0}\circ\lambda).$$
\end{definition}

The following basic facts about covering spaces can be found in
\cite{GH} or \cite{Mu}. We quote them here since we need to use them often and
for the convenience of the reader.

\begin{theorem}\label{lift}
Let $X,Y,E$ be connected and locally path-connected topological spaces and
$x_{0} \in X, y_{0} \in Y, e_{0} \in E$. Suppose $(E,e_{0})\stackrel{p}{\rightarrow} (X,x_{0})$
is a covering space and $f:(Y,y_{0}) \rightarrow (X,x_{0})$ is a continuous map.
Then there exists a lifting $f^{'}$ of $f$ ,that is, $pf^{'}=f$ if and only if
$$f_{*}\pi_{1}(Y,y_{0}) \subset p_{*}\pi_{1}(E,e_{0}).$$ In particular, if $Y$
is simply connected, then $f$ always has a lifting.
\end{theorem}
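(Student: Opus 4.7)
The statement is the classical lifting criterion for covering spaces, so my plan is to follow the standard path-lifting construction.

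First I would dispatch the easy direction. If a lift $f'$ exists with $pf' = f$, then on fundamental groups $f_* = p_* \circ f'_*$, so $f_*\pi_1(Y,y_0) = p_*f'_*\pi_1(Y,y_0) \subset p_*\pi_1(E,e_0)$. This uses nothing beyond functoriality of $\pi_1$.

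For the substantive direction, I would construct $f'$ pointwise by path lifting. Given $y \in Y$, pick a path $\gamma$ in $Y$ from $y_0$ to $y$; then $f\circ\gamma$ is a path in $X$ starting at $x_0$, and by the unique path-lifting property of the covering $p$ there is a unique path $\widetilde{f\gamma}$ in $E$ starting at $e_0$ covering it. Define $f'(y) := \widetilde{f\gamma}(1)$. The first key step is well-definedness: if $\gamma'$ is another path from $y_0$ to $y$, then $\gamma\cdot\overline{\gamma'}$ is a loop at $y_0$ in $Y$, so $f_*[\gamma\cdot\overline{\gamma'}] \in f_*\pi_1(Y,y_0) \subset p_*\pi_1(E,e_0)$ by hypothesis. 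Hence $f\circ(\gamma\cdot\overline{\gamma'})$ is homotopic rel endpoints to a loop that lifts to a loop at $e_0$, and by the homotopy lifting property the lift of $f\circ(\gamma\cdot\overline{\gamma'})$ starting at $e_0$ is itself a loop. Splitting this lifted loop at its midpoint and using uniqueness of path lifting shows that the lifts of $f\circ\gamma$ and $f\circ\gamma'$ end at the same point, giving $f'(y)$ independent of the chosen path. By construction $pf' = f$ and $f'(y_0) = e_0$.

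The main obstacle, as usual, is continuity of $f'$; this is exactly where local path-connectedness of $Y$ is used. Given $y \in Y$, let $U \subset X$ be an evenly covered open neighborhood of $f(y)$, and let $V \subset E$ be the sheet over $U$ containing $f'(y)$, so that $p|_V : V \to U$ is a homeomorphism. Using continuity of $f$ and local path-connectedness of $Y$, I would choose a path-connected open neighborhood $W$ of $y$ with $f(W) \subset U$. For any $y' \in W$, pick a path $\delta$ in $W$ from $y$ to $y'$; concatenating a fixed path $\gamma$ from $y_0$ to $y$ with $\delta$ and lifting shows that $f'(y') = (p|_V)^{-1}(f(y'))$, because the lift of $f\circ\delta$ starting at $f'(y)$ stays inside $V$. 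Hence $f'|_W = (p|_V)^{-1} \circ f|_W$, which is continuous, and so $f'$ is continuous at $y$. The final sentence of the statement is immediate: if $Y$ is simply connected then $f_*\pi_1(Y,y_0)$ is trivial, hence trivially contained in $p_*\pi_1(E,e_0)$.
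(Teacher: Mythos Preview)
Your proof is correct and is precisely the standard argument for the lifting criterion. Note, however, that the paper does not actually prove this theorem: it is stated without proof as a quoted background fact from covering space theory, with a reference to standard textbooks (Greenberg--Harper and Munkres). So there is no ``paper's own proof'' to compare against; your argument is essentially what one would find in those references.
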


\begin{theorem}(Unique lifting property)\label{ULT}
Let $(E,e_{0})\stackrel{p}{\rightarrow} (X,x_{0})$ be a covering
space with based points and $f:(Y,y_{0}) \rightarrow (X,x_{0})$ be a
continuous map. Assume $Y$ is connected. If there exists a
continuous map $f^{'}:(Y,y_{0})\rightarrow (E,e_{0})$ such that
$pf^{'}=f$, then it is unique.
\end{theorem}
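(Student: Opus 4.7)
The plan is to run the standard connectedness argument for lifts. I would assume $f',f''\colon (Y,y_0)\to (E,e_0)$ are two lifts of $f$ and consider the locus
$$A \;=\; \{\,y\in Y : f'(y)=f''(y)\,\}.$$
Since $f'(y_0)=e_0=f''(y_0)$, the set $A$ is nonempty. The goal is to show that $A$ is both open and closed in $Y$, so that connectedness of $Y$ forces $A=Y$.

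To handle openness and closedness simultaneously, I would use the defining property of the covering $p$: every $x\in X$ has an open neighborhood $U$ such that $p^{-1}(U)=\bigsqcup_{\alpha}\tilde U_\alpha$ with each $p|_{\tilde U_\alpha}\colon \tilde U_\alpha\to U$ a homeomorphism. Fix $y\in Y$ and pick such an evenly covered neighborhood $U$ of $f(y)$. Let $\tilde U_1$ and $\tilde U_2$ be the sheets containing $f'(y)$ and $f''(y)$ respectively (possibly equal). By continuity of $f'$ and $f''$, there is a neighborhood $V$ of $y$ with $f'(V)\subset \tilde U_1$ and $f''(V)\subset \tilde U_2$.

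If $y\in A$, then $\tilde U_1=\tilde U_2=:\tilde U$, and on $V$ we have $p\circ f'=f=p\circ f''$ with values in $\tilde U$; since $p|_{\tilde U}$ is a homeomorphism, this forces $f'=f''$ on $V$, so $V\subset A$. If $y\notin A$, then $\tilde U_1\neq \tilde U_2$, and the disjointness $\tilde U_1\cap \tilde U_2=\emptyset$ rules out any equality of $f'$ and $f''$ on $V$, so $V\subset Y\setminus A$. Thus $A$ is both open and closed, and connectedness of $Y$ gives $A=Y$, i.e.\ $f'=f''$.

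The only genuinely substantive step is the local dichotomy: translating the set-theoretic equality/inequality of $f'(y)$ and $f''(y)$ into an open condition via the sheets of an evenly covered neighborhood. Once that observation is in place, everything else is formal; in particular the hypothesis that $Y$ be connected (rather than path-connected) is used only at the very end to conclude $A=Y$.
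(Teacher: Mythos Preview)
Your argument is correct and is exactly the standard open-and-closed proof of the unique lifting property. The paper itself does not supply a proof of this statement; it merely quotes it as a basic fact from the references \cite{GH} and \cite{Mu}, so there is nothing further to compare.
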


\begin{proposition}
If $f$ is a Weierstrass polynomial of degree $n$ and
$Y\stackrel{p}{\rightarrow} X$ is a connected covering space, then
any two roots of $p^{*}f$ are either equal everywhere or equal nowhere; in particular, $p^{*}f$ has at most $n$ roots.
\end{proposition}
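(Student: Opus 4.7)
The plan is to recast a root of $p^{*}f$ as a continuous section of a covering space over $Y$ and then deduce the dichotomy from the unique lifting property (Theorem \ref{ULT}) combined with the connectedness of $Y$.

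First, I would observe that $p^{*}f$ is itself a Weierstrass polynomial of degree $n$ on $Y$: for each $y\in Y$ the fibre polynomial is $f_{p(y)}$, which has $n$ distinct roots by hypothesis. Consequently its solution space $\tilde{E}=\{(y,z)\in Y\times\C : f_{p(y)}(z)=0\}$, equipped with the first projection $\tilde{p}:\tilde{E}\to Y$, is a covering space over $Y$ (by the fact quoted immediately after the definition of Weierstrass polynomial; one can also view it as the pullback of $E\to X$ along $p$). A root $\alpha:Y\to\C$ of $p^{*}f$ is exactly the same datum as a continuous section $s_{\alpha}:Y\to\tilde{E}$, $y\mapsto(y,\alpha(y))$, of $\tilde{p}$.

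Now let $\alpha,\beta$ be two roots of $p^{*}f$ and set $A=\{y\in Y:\alpha(y)=\beta(y)\}$. The core step is to show that $A$ is clopen in $Y$. Closedness is immediate, since $A$ is the preimage of the diagonal under the continuous map $(\alpha,\beta):Y\to\C\times\C$ and the diagonal is closed because $\C$ is Hausdorff. For openness, given $y_{0}\in A$, I would pick an evenly covered neighborhood $V$ of $y_{0}$ together with the sheet $U\subset\tilde{p}^{-1}(V)$ containing $s_{\alpha}(y_{0})=s_{\beta}(y_{0})$; by continuity of both sections, a sufficiently small neighborhood of $y_{0}$ is carried into $U$ by each, and since $\tilde{p}|_{U}$ is a homeomorphism with $\tilde{p}\circ s_{\alpha}=\tilde{p}\circ s_{\beta}=\mathrm{id}$, the two sections must coincide there. (Alternatively, one can apply Theorem \ref{ULT} to the restrictions of $s_{\alpha},s_{\beta}$ to any connected neighborhood of $y_{0}$.) Since $Y$ is connected, $A$ is either empty or all of $Y$, yielding the stated dichotomy.

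For the final assertion, fix any $y_{0}\in Y$ and consider the evaluation map $\alpha\mapsto\alpha(y_{0})$ from the set of roots of $p^{*}f$ into $\{z\in\C : f_{p(y_{0})}(z)=0\}$. The dichotomy just established shows this map is injective, while its target has exactly $n$ elements since $f$ is a Weierstrass polynomial; hence $p^{*}f$ has at most $n$ roots. The only mildly technical point is the local argument for openness of $A$, but once the covering-space structure on $\tilde{E}$ is in hand it is entirely routine.
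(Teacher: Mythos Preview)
Your proof is correct and follows essentially the same clopen argument as the paper: both show that the coincidence set $A=\{y:\alpha(y)=\beta(y)\}$ is closed by Hausdorffness and open via the local triviality of the relevant covering, then invoke connectedness of $Y$. Your packaging in terms of sections of the pullback covering $\tilde{E}\to Y$ is a clean reformulation, but the paper's version---working directly with the sheets of the solution space $E\to X$ and their disjoint images in $\C$---amounts to the same local uniqueness mechanism.
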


\begin{proof}
    Suppose $\alpha , \beta$ are roots of $p^{*}f$. Let $A=\lbrace y \in Y \mid \alpha(y)=\beta(y)\rbrace$. Then $A$ is closed in $Y$ as $\C$ is Hausdorff. Assume $E$ is the solution space of $f$, and $pr_{1}$, $pr_{2}$ are the first and second projections respectively. For $y \in A$, there is a neighborhood $U$ of $p(y)$ such that $pr_{1}^{-1}(U)=\coprod_{i=1}^{n}U_{i}$ is a trivial covering on $U$. Since $\C$ is Hausdorff, we may take a smaller neighborhood if necessary such that $pr_{2}(U_{1}),\cdots ,pr_{2}(U_{n})$ lie in some disjoint open subsets $V_{1}, \cdots ,V_{n}$ of $\C$ respectively. $\alpha(y)=\beta(y)$ lies in one of $V_{1}, \cdots ,V_{n}$. Assume $\alpha(y)=\beta(y) \in V_{1}$. Then $ W=\alpha^{-1}(V_{1}) \cap \beta^{-1}(V_{1}) \cap p^{-1}(U)$ is an open neighborhood of  $y$ in $Y$ and $W \subset A$. Hence $A$ is open in $Y$. Consequently, $A$ is empty or whole $Y$. In other words, two roots of $p^{*}f$ are either equal everywhere or equal nowhere; thus, $p^{*}f$ has at most $n$ roots.
\end{proof}

\begin{theorem}\label{sol} (Algebraic closure)
Let $f$ be a Weierstrass polynomial on $X$ of degree $n$. Then $f$
splits in $\widetilde{X}$ where $p:(\widetilde{X}, \widetilde{x}_0)
\rightarrow (X, x_0)$ is the universal covering of $X$.
\end{theorem}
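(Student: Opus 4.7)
The plan is to exhibit $n$ distinct roots of $p^{*}f$ by lifting the universal covering map $p$ through the solution space $E$ of $f$, using the fact that $\widetilde{X}$ is simply connected so Theorem~\ref{lift} guarantees the required lifts.

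More precisely, I would proceed as follows. Let $E=\{(x,z)\in X\times\C:f_x(z)=0\}$ be the solution space of $f$, with the first projection $pr_1:E\to X$, which is a covering of degree $n$, and the second projection $pr_2:E\to\C$. Since $f$ is Weierstrass, the fiber $pr_1^{-1}(x_0)$ consists of exactly $n$ distinct points $(x_0,z_1),\ldots,(x_0,z_n)$ with the $z_i$ pairwise distinct. Because $\widetilde{X}$ is simply connected, Theorem~\ref{lift} applied to the covering $pr_1:E\to X$ and the map $p:(\widetilde{X},\widetilde{x}_0)\to(X,x_0)$ produces, for each $i\in\{1,\ldots,n\}$, a continuous lift $\ell_i:(\widetilde{X},\widetilde{x}_0)\to(E,(x_0,z_i))$ with $pr_1\circ\ell_i=p$.

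Now set $\alpha_i:=pr_2\circ\ell_i:\widetilde{X}\to\C$. For every $y\in\widetilde{X}$, the point $\ell_i(y)=(p(y),\alpha_i(y))$ lies in $E$, which means $f_{p(y)}(\alpha_i(y))=0$; equivalently, $(p^{*}f)_y(\alpha_i(y))=0$. Hence each $\alpha_i$ is a root of $p^{*}f$. Moreover $\alpha_i(\widetilde{x}_0)=z_i$, so the values $\alpha_1(\widetilde{x}_0),\ldots,\alpha_n(\widetilde{x}_0)$ are pairwise distinct. By the previous proposition (applied to the connected covering $p:\widetilde{X}\to X$), any two roots of $p^{*}f$ are either equal everywhere or equal nowhere; disagreement at $\widetilde{x}_0$ therefore forces $\alpha_i\ne\alpha_j$ on all of $\widetilde{X}$ whenever $i\ne j$. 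This exhibits the required $n$ distinct roots of $p^{*}f$, so $f$ splits in $\widetilde{X}$.

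I do not expect a real obstacle here; the argument is essentially the standard covering-space lifting dictionary. The only point needing a small amount of care is making sure the lifts $\ell_i$, which a priori live in $E$, actually give rise to \emph{continuous} complex-valued functions on $\widetilde{X}$ satisfying the polynomial equation pointwise, and that the ``distinct at one point implies distinct everywhere'' step is invoked with the right connected covering, namely $p:\widetilde{X}\to X$ itself rather than $pr_1:E\to X$. Once the simple-connectivity of $\widetilde{X}$ has been used to produce the lifts, everything else reduces to unwinding definitions.
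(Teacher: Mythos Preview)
Your argument is correct and follows essentially the same route as the paper: lift $p$ through the solution space using simple connectivity of $\widetilde{X}$, then compose with the second projection to obtain $n$ roots that disagree at $\widetilde{x}_0$. The only cosmetic difference is that the paper first decomposes $E$ into its path-connected components $E_1,\ldots,E_k$ before invoking Theorem~\ref{lift}, since that theorem (as stated here) requires the target covering space to be connected; your direct application to $pr_1:E\to X$ tacitly relies on this, so to be fully rigorous you should lift into the component of $E$ containing $(x_0,z_i)$, which is exactly what the paper does.
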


\begin{proof}
Let $E_{1}, ... ,E_{k}$ be all path-connected components of the
solution space $\pi:E \rightarrow X$ of $f$. Then for each $i$,
$\pi_{i}:=\pi\vert_{E_{i}}:E_{i} \rightarrow X$ is a covering space
of $X$. Let
$$(\pi_{i})^{-1}(x_{0}) = \lbrace e_{i,1},... ,e_{i,r_{i}} \rbrace .$$
From Theorem~\ref{lift} and the unique lifting theorem, for each
$e_{i,j}$, there exists a unique lifting,
$\tilde{p}_{i,j}:(\tilde{X},\tilde{x}_{0}) \rightarrow
(E_{i},e_{i,j})$ of $p$. Define $\alpha_{i,j}:=q_{i} \circ
\tilde{p}_{i,j}$ which are roots of $p^*f$ where $q_{i}: E_{i}
\rightarrow \C$ is the projection to the second factor. We have the
following commutative diagram
$$\xymatrix{
\tilde{X} \ar[dd]_{p} \ar[rd]_{\tilde{p}_{i,j}} \ar[rrrd]^{\alpha_{i,j}} & & & \\
& E_{i} \ar[ld]^{\pi_{i}} \ar[rr]^{q_{i}} &  & \C \\
X & & & }$$ Note that if $(i,j) \neq (i^{'},j^{'})$, then
$q_{i}(e_{i,j}) \neq q_{i^{'}}(e_{i^{'},j^{'}})$. Hence
$\alpha_{i,j}(\tilde{x}_{0})\neq
\alpha_{i^{'},j^{'}}(\tilde{x}_{0})$. Since $r_{1}+r_{2}+\cdots
+r_{k}=n$, the maps
$$\alpha_{i, j}, j=1, ... , r_i, i=1, ... , k$$
are all the roots of $p^*f$.
\end{proof}

\section{Splitting coverings}
Let $Y \stackrel{p}{\rightarrow} X$ be a covering space of $X$. We
denote the group of covering transformations by $A(Y/X)$, that is,
$$A(Y/X)=\lbrace \Phi:Y\rightarrow Y \mid \Phi \; is \; a \;
homeomorphism \; such \; that \; p\Phi=p \rbrace.$$

\begin{definition}
Let $Y \stackrel{p}{\rightarrow} X$ be a covering space of $X$. $Y
\stackrel{p}{\rightarrow} X$ is called a \textbf{Galois covering} if
$A(Y/X)$ acts on a fibre of $X$ transitively (hence all fibres).
\end{definition}

\begin{remark}
From theory of covering spaces (\cite[pg 25]{GH}), the above
definition is equivalent to $$p_{*}\pi_{1}(Y,y_{0}) \lhd
\pi_{1}(X,x_{0}),$$ where $x_{0} \in X$, $y_{0} \in Y$, and
$p(y_{0})=x_{0}$. In particular, the universal covering is a Galois
covering.
\end{remark}

\subsection{The existence and uniqueness of splitting coverings}

\begin{definition}
Let $f$ be a Weierstrass polynomial of degree n on $X$ and $Y
\stackrel{p}{\rightarrow} X$ be a covering space where $Y$ is
path-connected. $Y$ is said to be a \textbf{splitting covering} of
$f$ if
\begin{enumerate}
\item $f$ splits in $Y$,
\item $Y$ is the smallest among such coverings, that is, if $Y^{'} \stackrel{p^{'}}{\rightarrow} X$
is another covering where $f$ splits, then there exists a covering map $\pi : Y^{'} \rightarrow Y$
such that the diagram $$\xymatrix{
Y^{'} \ar[rr]^{\pi} \ar[rd]_{p^{'}}& & Y \ar[ld]^{p} \\
& X & }$$ commutes.
\end{enumerate}
\end{definition}

Construction of a splitting covering of $f$: Let $h_{0}$ be an
irreducible component of $f$ in $\mathcal{C}(X)[z]$. Let
$E_{1}\stackrel{p_{1}}{\rightarrow} X$ be the solution space of
$h_{0}$ and $\pi_{1}:E_{1} \rightarrow \C$ be the projection to the
second component. Hence
$$(p_{1}^{*}f)(z)=(z-\pi_{1})g_{1}$$ in
$(p_{1}^{*}\mathcal{C}(X))[z]$. Inductively, assume for $i < n$, we
have
$$(p_{i}^{*}f)(z)=(z-q_{i}^{*}\cdots q_{2}^{*}\pi_{1})\cdots (z-q_{i}^{*}\pi_{i-1})(z-\pi_{i})g_{i}$$
in $(p_{i}^{*}\mathcal{C}(X))[z]$, where $$\xymatrix{
E_{i} \ar[r]^{q_{i}} \ar[rrrd]^{p_{i}} & E_{i-1} \ar[r]^{q_{i-1}}& \cdots \ar[r]^{q_{2}}& E_{1} \ar[d]^{p_{1}} \\
& & & X }$$ and $\pi_{j}:E_{j} \rightarrow \C$ is the projection of
the last component, $j=1, ... ,i$. Note that $g_{i}$ is a
Weierstrass polynomial in $E_{i}$. For $i+1$, let $h_{i}$ be an
irreducible component of $g_{i}$ in $(p_{i}^{*}\mathcal{C}(X))[z]$,
$E_{i+1}\stackrel{q_{i+1}}{\rightarrow} E_{i}$ be the solution space
of $h_{i}$, $p_{i+1}=p_{1}q_{2}\cdots q_{i+1}$ and
$\pi_{i+1}:E_{i+1} \rightarrow \C$ be the projection of the last
component. Hence
$$(p_{i+1}^{*}f)(z)=(z-q_{i+1}^{*}\cdots q_{2}^{*}\pi_{1})\cdots (z-q_{i+1}^{*}\pi_{i})(z-\pi_{i+1})g_{i+1}$$
in $(p_{i+1}^{*}\mathcal{C}(X))[z]$. By induction, we have
$E_{f}:=E_{n}\stackrel{q:=p_{n}}{\longrightarrow} X$ and
$$(q^{*}f)(z)=(z-\alpha_{1})\cdots (z-\alpha_{n-1})(z-\alpha_{n})$$ in
$(q^{*}\mathcal{C}(X))[z]$ where
$$\alpha_{j}:=q_{n}^{*}\cdots q_{j+1}^{*}\pi_{j},$$ $j=1,... ,n$. Hence
$E_{f}$ is connected, and $f$ splits in $E_{f}$. Note that an
element in $E_{f}$ is of the form $(\cdots (((x,z_{1}),z_{2})\cdots
,z_{n})$. We identify it as $(x,z_{1},... ,z_{n})$. Then
$\alpha_{j}$ becomes the projection of the $(j+1)^{th}$ component,
$q$ becomes the projection of the first component and $$E_{f}
\subset S_{f}$$ where $$S_{f}:= \lbrace (x,z_{1},... ,z_{n}) \in X
\times \C^{n} : f_{x}(z_{i})=0, \; i=1,... ,n, \; and \; z_{i} \neq
z_{j} \; if \; i \neq j \rbrace.$$ It is clear that
$(pr_{1},pr_{2}):E_{f} \rightarrow E$ is a covering map where $pr_1,
pr_2$ are the projection to the first and second factor
respectively.

\begin{theorem}\label{split}
Let $f$ be a Weierstrass polynomial of degree $n$ in $X$.
\begin{enumerate}
\item $E_{f} \stackrel{q}{\rightarrow} X$ is a splitting covering.
\item Splitting covering is unique up to covering isomorphisms.
\item $E_{f} \stackrel{q}{\rightarrow} X$ is a Galois covering.
\end{enumerate}
\end{theorem}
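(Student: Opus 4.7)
For (1), the splitting property is immediate from the construction: $\alpha_1, \dots, \alpha_n$ are $n$ distinct roots of $q^*f$ on $E_f$. For the minimality, suppose $p' : Y' \to X$ is any connected covering in which $f$ splits, with roots $\beta_1, \dots, \beta_n \in \mathcal{C}(Y')$, and fix $y_0' \in (p')^{-1}(x_0)$. Since $\beta_1(y_0'), \dots, \beta_n(y_0')$ form a permutation of the roots $c_j := \alpha_j(\tilde{e}_0)$ of $f_{x_0}$, I may relabel the $\beta_i$ so that $\beta_i(y_0') = c_i$, and then define
$$\phi : Y' \to X \times \C^n, \qquad \phi(y) := (p'(y), \beta_1(y), \dots, \beta_n(y)),$$
which lands in $S_f$ and sends $y_0'$ to $\tilde{e}_0$. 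The key observation is that $E_f$ is a single connected component of $S_f$: it is connected by construction, the inclusion $E_f \hookrightarrow S_f$ is over $X$, and an injection between coverings of the same base is automatically clopen. Hence, by connectedness of $Y'$, $\phi(Y') \subseteq E_f$. Finally, the relation $q\circ \phi = p'$ between two coverings of $X$ forces $\phi$ itself to be a covering map, yielding the required factorization.

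Part (2) is then almost formal. Given two splitting coverings $Y_1, Y_2$ of $f$, applying (1) in both directions produces covering maps $\pi : Y_2 \to Y_1$ and $\pi' : Y_1 \to Y_2$ over $X$. The self-coverings $\pi\pi'$ and $\pi'\pi$ of connected spaces have degree one by multiplicativity of degrees in a tower, hence are homeomorphisms, so $\pi$ and $\pi'$ are mutually inverse.

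For (3), rather than constructing deck transformations by hand, I plan to verify the normal-subgroup criterion from the remark following the Galois definition. The fiber of $S_f \to X$ over $x_0$ is the set of orderings of $\{c_1, \dots, c_n\}$, and the monodromy action of $\pi_1(X, x_0)$ on it is componentwise: if $\rho : \pi_1(X, x_0) \to \mathrm{Sym}\{c_1, \dots, c_n\}$, $\gamma \mapsto \sigma_\gamma$, is the monodromy representation of the solution space $E \to X$, then $\gamma \cdot (x_0, z_1, \dots, z_n) = (x_0, \sigma_\gamma z_1, \dots, \sigma_\gamma z_n)$. The stabilizer of $\tilde{e}_0 = (x_0, c_1, \dots, c_n)$ under this action is therefore $\ker \rho$, which is manifestly normal in $\pi_1(X, x_0)$; since this stabilizer equals $q_*\pi_1(E_f, \tilde{e}_0)$ by standard covering theory, the Galois property follows from the remark.

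The main obstacle I anticipate is the claim in (1) that $E_f$ is a single connected component of $S_f$, i.e.\ that the nested construction $E_1, \dots, E_n$ really traces out the full $\pi_1(X, x_0)$-orbit of $\tilde{e}_0$ inside $S_f$. This is believable from the way irreducible factors are peeled off at each stage, but needs careful bookkeeping. After that, everything reduces to the lifting and unique-lifting theorems already quoted.
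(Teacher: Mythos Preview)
Your argument is correct throughout, and for part (1) it is essentially the paper's argument: define the map $\phi(y)=(p'(y),\beta_1(y),\dots,\beta_n(y))$ into $S_f$, land in $E_f$ by connectedness, and observe that a map over $X$ between coverings is itself a covering. One remark: the ``main obstacle'' you flag at the end is already dispatched by your own clopen observation. You do not need to trace the $\pi_1$-orbit through the nested construction $E_1,\dots,E_n$; knowing that $E_f$ is connected (which is immediate, since each $E_{i+1}\to E_i$ is a connected solution space) and that $E_f\hookrightarrow S_f$ is an inclusion of coverings over $X$ (hence has clopen image) is all you need. So there is no bookkeeping left to do.

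Parts (2) and (3) are where you diverge from the paper. For (2) the paper chooses basepoints so that $\pi'\pi(e_0)=e_0$ and invokes the unique lifting theorem to get $\pi'\pi=\mathrm{id}$; your degree argument is fine but tacitly uses that a splitting covering has finite degree over $X$. This is true (any $Y$ in which $f$ splits admits a covering map $E_f\to Y$, so $[Y:X]$ divides $[E_f:X]\le n!$), but you should say it. The paper's route avoids this side remark.

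For (3) the paper reuses the construction from (1): given two points $e_0,e_1$ in the fibre, apply the map $\phi$ with $Y'=E_f$ and the roots reordered so as to send $e_0$ to $e_1$, then use unique lifting to see this is a deck transformation. Your monodromy argument is genuinely different and arguably cleaner: identifying $q_*\pi_1(E_f,\tilde e_0)$ with the stabiliser of $(c_1,\dots,c_n)$ under the diagonal action, hence with $\ker\rho$, gives normality for free. The paper's approach has the virtue of producing the deck transformations explicitly (which is used later, e.g.\ in showing $\omega_f$ is injective), while yours gives the Galois property with no extra constructions.
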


\begin{proof}
\begin{enumerate}
\item Let $Y \stackrel{p}{\rightarrow} X$ be a covering space such that $f$ splits in $Y$
with roots $\beta_{1},\cdots ,\beta_{n}$. Let $x_{0} \in X$,
$(x_{0},z_{0,1},\cdots ,z_{0,n}) \in E_{f}$ and $y_{0}$ be any
element in $p^{-1}(x_{0})$. After reordering $\beta_{1},\cdots
,\beta_{n}$ if necessary, we can assume
$\beta_{1}(y_{0})=z_{0,1},\cdots ,\beta_{n}(y_{0})=z_{0,n}$.
Define $\pi : Y \rightarrow E_{f}$ by $$\pi
(y)=(p(y),\beta_{1}(y),\cdots ,\beta_{n}(y)).$$ Then
\begin{align} p=q \circ \pi . \tag{1}
\end{align}
For any $e=(x,z_{1},\cdots ,z_{n}) \in E_f$ there exists a
path-connected open neighborhood $U$ of $x$ in $X$ such that
$q^{-1}(U)=\coprod_{i}U_{i}$, $p^{-1}(U)=\coprod_{j}V_{j}$, all
$U_{i}$ and $V_{j}$ are open in $E_{f}$ and $Y$ respectively,
and $q\vert_{U_{i}}$ and $p\vert_{V_{j}}$ are homeomorphisms.
Since $V_{j}$ is path-connected and by (1), $\pi (V_{j}) \subset
U_{i}$ for some $i$. Hence
\begin{align}
\pi\vert_{V_{j}}=(q\vert_{U_{i}})^{-1}\circ (p\vert_{V_{j}})
\tag{2}
\end{align}
is a homeomorphism. Therefore, if $\pi (Y) \cap U_{i} \neq \phi$,
then $\pi (V_{j}) \cap U_{i} \neq \phi$ for some $j$, and hence $U_{i}=\pi(V_{j}) \subset \pi (Y)$.
In other words, either $U_{i} \subset \pi(Y)$ or $U_{i} \subset \pi(Y)^{c}$. Therefore, $\pi(Y)$
is open and closed in $E_{f}$. Since $(x_{0},z_{0,1},\cdots ,z_{0,n}) \in \pi(Y)$ and $E_{f}$
is connected, $\pi$ is surjective. Hence by (2), $Y \stackrel{\pi}{\rightarrow} E_{f}$ is a covering space.

\item Let $x_{0} \in X$ and $e_{0} \in q^{-1}(x_{0})$ Suppose $Y \stackrel{p}{\rightarrow} X$
is also a splitting covering. Then by the proof of part one, there exists a covering map
$\pi:E_{f}\rightarrow Y$ and a covering map $\pi':Y \rightarrow
E_f$ such that the diagram
$$\xymatrix{E_{f} \ar[rr]^{\pi}
\ar[rd]_{q}  & & Y \ar @/^/[ll]^{\pi'}  \ar[ld]^{p} \\& X & }$$
commutes, and $\pi'(\pi(e_{0}))=e_{0}$. By the unique lifting theorem,
$$\pi\circ \pi^{'}=id_{Y}, \; \pi^{'}\circ\pi=id_{E_{f}}.$$
Hence the coverings $Y \stackrel{p}{\rightarrow} X$ and $E_{f}
\stackrel{q}{\rightarrow} X$ are isomorphic.

\item Let $x_{0} \in X$ and $e_{0}, e_{1} \in q^{-1}(x_{0})$.
By the argument in the first part, we have $\pi_{0}:(E_{f},e_{0}) \rightarrow (E_{f},e_{1})$
and $\pi_{1}:(E_{f},e_{1}) \rightarrow (E_{f},e_{0})$ such that $$q=q \circ \pi_{i}, \; i=0,1.$$
By the unique lifting theorem, $$\pi_{1} \circ \pi_{0}=\pi_{0}\circ\pi_{1}=id_{E_{f}},$$ and
hence $\pi_{0} \in A(E_{f}/X)$. Therefore, $A(E_{f}/X)$ acts transitively on $q^{-1}(x_{0})$.
\end{enumerate}
\end{proof}

Recall that in Galois theory, the splitting field of a separable
polynomial is Galois over the base field. The above result makes a
parallel correspondence between splitting fields and splitting
coverings.

\begin{proposition}
Let $f$ be an irreducible Weierstrass polynomial on $X$ of degree
$n$ with solution space $E\stackrel{\pi}{\rightarrow} X$. Suppose
$p: Y\rightarrow E$ is a covering space and $q:Y \rightarrow X$ is a
Galois covering of $X$ where $q=\pi\circ p$. Then $f$ splits in $Y$.
\end{proposition}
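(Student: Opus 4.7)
The plan is to produce $n$ roots of $q^{*}f$ on $Y$ by pushing one obvious root around via deck transformations of the Galois covering $q:Y\to X$, and then invoke the earlier ``equal everywhere or equal nowhere'' proposition to guarantee the roots stay distinct.

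First I would record the obvious root. Because $E\stackrel{\pi}{\to}X$ is the solution space of $f$, the second-coordinate projection $\pi_{2}:E\to\C$ satisfies $(\pi^{*}f)(\pi_{2})=0$. Setting $\alpha_{0}:=\pi_{2}\circ p$, we obtain one root of $q^{*}f=p^{*}\pi^{*}f$ on $Y$. Note also that $Y$ is connected (being the total space of the Galois covering $q$), so the earlier proposition applies: any two roots of $q^{*}f$ that agree at one point agree everywhere.

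Next I would generate more roots by the deck-transformation action. For any $\sigma\in A(Y/X)$ we have $q\sigma=q$, so $\alpha_{0}\circ\sigma$ is again a root of $q^{*}f$. The key step is to show that as $\sigma$ ranges over $A(Y/X)$, the values $\alpha_{0}(\sigma(y_{0}))=\pi_{2}(p(\sigma(y_{0})))$ realize all $n$ roots of $f_{x_{0}}$ at a fixed basepoint $y_{0}\in q^{-1}(x_{0})$. For this I would argue in two stages. Because $p:Y\to E$ is a covering map, $p$ restricts to a surjection $q^{-1}(x_{0})\twoheadrightarrow\pi^{-1}(x_{0})$ (the fiber on the left decomposes as the union of the fibers of $p$ over points of $\pi^{-1}(x_{0})$). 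Because $q$ is a Galois covering, $A(Y/X)$ acts transitively on $q^{-1}(x_{0})$. Composing these two facts, for each of the $n$ points $e\in\pi^{-1}(x_{0})$ we can pick $\sigma\in A(Y/X)$ with $p(\sigma(y_{0}))=e$, and the corresponding $\alpha_{0}\circ\sigma$ takes the value $\pi_{2}(e)$ at $y_{0}$. Since $\pi_{2}$ is injective on $\pi^{-1}(x_{0})$, this produces deck transformations $\sigma_{1},\dots,\sigma_{n}$ with $\alpha_{0}(\sigma_{i}(y_{0}))$ running over all $n$ distinct roots of $f_{x_{0}}$.

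Finally I would close the argument by invoking the proposition from the previous section: the $n$ roots $\alpha_{0}\circ\sigma_{1},\dots,\alpha_{0}\circ\sigma_{n}$ take pairwise distinct values at $y_{0}$, hence are equal nowhere, and in particular are pairwise distinct as continuous functions on $Y$. This exhibits $n$ distinct continuous roots of $q^{*}f$, so $f$ splits in $Y$. I expect the only delicate point to be the surjectivity of $p$ on the fiber $q^{-1}(x_{0})\to\pi^{-1}(x_{0})$; once that is stated carefully, everything else is a direct combination of the Galois hypothesis and the earlier uniqueness-of-roots proposition.
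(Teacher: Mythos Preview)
Your argument is correct and complete. The only point you flag as delicate---surjectivity of $p$ on $q^{-1}(x_{0})\to\pi^{-1}(x_{0})$---is immediate once you note that $E$ is connected (since $f$ is irreducible) and hence the covering map $p:Y\to E$ is surjective, so $p^{-1}(e_{i})\neq\emptyset$ for each $e_{i}\in\pi^{-1}(x_{0})$.

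The paper takes a different route. Rather than using the transitive action of $A(Y/X)$ on $q^{-1}(x_{0})$, it works with the fundamental-group characterization of Galois coverings: from $q_{*}\pi_{1}(Y,y_{0})\lhd\pi_{1}(X,x_{0})$ and $q_{*}\pi_{1}(Y,y_{0})\subset\pi_{*}\pi_{1}(E,e_{1})$, conjugation by the class of $\pi_{*}\gamma_{i}$ (where $\gamma_{i}$ is a path in $E$ from $e_{1}$ to $e_{i}$) shows $q_{*}\pi_{1}(Y,y_{0})\subset\pi_{*}\pi_{1}(E,e_{i})$ for every $i$. The lifting criterion then yields maps $p_{i}:(Y,y_{0})\to(E,e_{i})$ over $X$, and the roots are $pr_{2}\circ p_{i}$. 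The two proofs are related by $p_{i}=p\circ\sigma_{i}$: your deck transformations manufacture exactly the paper's lifts. Your approach is a bit more elementary in that it avoids the lifting criterion and the $\pi_{1}$-calculation entirely, using only the definition of Galois via transitivity and the earlier ``equal everywhere or nowhere'' proposition; the paper's approach makes the role of normality of $q_{*}\pi_{1}(Y)$ more explicit and does not need to cite that proposition, since the $p_{i}$ are visibly distinct at $y_{0}$.
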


\begin{proof}
Let $x_{0} \in X$, $\pi^{-1}(x_{0})=\lbrace e_{1},\cdots
,e_{n}\rbrace$ and $y_{0} \in p^{-1}(e_{1})$. Assume $\gamma_{i}$ is
a path from $e_{1}$ to $e_{i}$ in $E$. Since
$q_{*}\pi_{1}(Y,y_{0})=\pi_{*}p_{*}\pi_{1}(Y,y_{0}) \subset
\pi_{*}\pi_1(E,e_{1})$ and $q_{*}\pi_{1}(Y,y_{0}) \triangleleft
\pi_{1}(X,x_{0})$,
$$q_{*}\pi_{1}(Y,y_{0})=[\pi_{*}\gamma_{i}]q_{*}\pi_{1}(Y,y_{0})[\pi_{*}\gamma_{i}]^{-1}
\subset
[\pi_{*}\gamma_{i}]\pi_{*}\pi_1(E,e_{1})[\pi_{*}\gamma_{i}]^{-1}
=\pi_{*}\pi_1(E,e_{i}).$$ Consequently, there are $p_{i}:(Y,y_{0})
\rightarrow (E,e_{i})$ such that
$$\xymatrix{
(Y,y_{0}) \ar[dd]_{q} \ar[rd]_{p_{i}} \ar[rrrd] & & & \\
& (E,e_{i}) \ar[ld]^{\pi} \ar[rr]_{pr_{2}} &  & \C \\
(X,x_{0}) & & & }$$ commutes. Therefore, $q^{*}f=(z-pr_{2}\circ
p_{1})\cdots (z-pr_{2}\circ p_{n})$ where $pr_2: E \rightarrow \C$
is the projection to the second factor.
\end{proof}

\begin{corollary}\label{CorSplit}
Let $f$ be an irreducible Weierstrass polynomial of degree $n$ on $X$ and
$E \stackrel{\pi}{\rightarrow} X$ be its solution space. Suppose $E \stackrel{\pi}{\rightarrow} X$
is a Galois covering. Then $E \stackrel{\pi}{\rightarrow} X$ is a splitting covering.
\end{corollary}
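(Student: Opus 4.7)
The strategy is to verify both conditions in the definition of splitting covering for $E \stackrel{\pi}{\rightarrow} X$: that $f$ splits in $E$, and that $E$ is minimal among coverings of $X$ in which $f$ splits. Condition (1) will come directly from the preceding proposition, and condition (2) from the explicit construction of $E_{f}$ in Section~3 combined with Theorem~\ref{split}(1).

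For the splitting condition, I would apply the preceding proposition with $Y := E$ and $p := \mathrm{id}_{E}$. Then $q = \pi \circ \mathrm{id}_{E} = \pi$, which is Galois by hypothesis, so the proposition immediately produces $n$ distinct roots of $\pi^{*}f$ in $\mathcal{C}(E)$; that is, $f$ splits in $E$. For minimality, I would use the inductive construction of $E_{f}$. Because $f$ is irreducible, one may take the very first irreducible factor $h_{0}$ in that construction to be $f$ itself, so $E_{1}$ is literally the solution space $E \stackrel{\pi}{\rightarrow} X$. The ensuing tower $E_{f} = E_{n} \stackrel{q_{n}}{\rightarrow} E_{n-1} \rightarrow \cdots \rightarrow E_{2} \stackrel{q_{2}}{\rightarrow} E_{1} = E$ is a composition of solution-space projections of Weierstrass polynomials, each of which is a covering map, so it assembles to a covering map $\rho : E_{f} \rightarrow E$ over $X$. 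Given any covering $Y' \stackrel{p'}{\rightarrow} X$ in which $f$ splits, Theorem~\ref{split}(1) furnishes a covering map $\tau : Y' \rightarrow E_{f}$ over $X$, and the composite $\rho \circ \tau : Y' \rightarrow E$ is the desired covering map factoring $p'$ through $\pi$.

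The only point that requires any attention is the observation that irreducibility of $f$ allows the construction of $E_{f}$ to begin at $E$ itself, so that $E_{f}$ sits as a covering of $E$ rather than merely of $X$. Once this is noted, nothing non-trivial remains: splitting in $E$ is handed to us by the previous proposition, existence and minimality of $E_{f}$ are Theorem~\ref{split}, and finite composability of covering projections between locally path-connected Hausdorff spaces is standard. I do not anticipate a genuine obstacle; this corollary is essentially a clean repackaging of the previous proposition together with Theorem~\ref{split}.
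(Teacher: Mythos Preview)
Your proposal is correct and follows the route the paper clearly intends: apply the preceding proposition with $Y=E$, $p=\mathrm{id}_E$ to obtain that $f$ splits in $E$, and then use the explicit construction of $E_f$ (which begins with $E_1=E$ because $f$ is irreducible, so that $(pr_1,pr_2):E_f\to E$ is a covering, as the paper already notes) together with Theorem~\ref{split}(1) to get minimality. The paper leaves the corollary unproved, but your argument is exactly the natural unpacking; an equally short variant is to observe that the covering maps $E\to E_f$ (from minimality of $E_f$) and $E_f\to E$ (from the construction) are mutual inverses by unique lifting, whence $E\cong E_f$.
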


\subsection{Another construction of splitting coverings}

Recall that any symmetric polynomial in $n$ variables can be written
as a unique polynomial in the elementary symmetric polynomials,
$s_{0},\cdots ,s_{n-1}$  where
$$\prod_{i=1}^{n}(z-z_{i})=z^{n}+\sum_{i=0}^{n-1}(-1)^{n-i}s_{i}(z_{1},\cdots ,z_{n})z^{i}.$$
Hence there is a unique polynomial in $n$ variables $\delta
(a_{0},\cdots ,a_{n-1})$ such that
$$\delta(-s_{n-1}(z_{1},\cdots ,z_{n}),\cdots ,(-1)^{n-i}s_{i}(z_{1},\cdots ,z_{n}),\cdots ,(-1)^{n}s_{0}(z_{1},\cdots ,z_{n}))=\prod_{1\leqslant
i<j \leqslant n}(z_{i}-z_{j}).$$ The polynomial $\delta
(a_{0},\cdots ,a_{n-1})$ is called the \textbf{discriminant
polynomial}. Define $$B^{n}:=\C^{n}-Z(\delta)$$ where $Z(\delta)$ is
the set of zeros of $\delta$.

\begin{lemma}
\begin{enumerate}
\item
Let
$$S:= \lbrace (a_{0},\cdots ,a_{n-1},z_{1},\cdots ,z_{n}) \in
B^{n} \times \C^{n} : z^{n}+\sum_{i=0}^{n-1}a_{i}z^{i} =
\prod_{i=1}^{n}(z-z_{i}) \rbrace$$ and $\pi$ be the projection
to $B^{n}$. Then $\pi:S\rightarrow B^{n}$ is an $n!$-fold
covering space.

\item
Let $S_{f}:= \lbrace (x,z_{1},\cdots ,z_{n}) \in X \times \C^{n}
: f_{x}(z_{i})=0, \; i=1,\cdots ,n, \; and \; z_{i} \neq z_{j}
\; if \; i \neq j \rbrace$. Then $S_{f}
\stackrel{\tilde{q}}{\rightarrow} X$ is an $n!$-fold covering
space where $\tilde{q}$ is the projection to $X$.
\end{enumerate}
\end{lemma}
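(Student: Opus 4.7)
The plan is to derive both statements from the well-known fact that the Vieta map, sending an ordered tuple of distinct complex numbers to the coefficients of the monic polynomial it determines, is an $n!$-fold covering onto $B^n$.

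For part (1), I first identify $S$ with the open set $U := \{(z_1,\ldots,z_n)\in\C^n : z_i\neq z_j \text{ for } i\neq j\}$ by projecting onto the last $n$ coordinates; the inverse sends $(z_1,\ldots,z_n)$ to $(a_0,\ldots,a_{n-1},z_1,\ldots,z_n)$, where the $a_i$ are read off from $\prod_i(z-z_i)$ by Vieta's formulas. Under this identification $\pi$ becomes the map $U\to B^n$ recording the coefficients, and the fibre over $(a_0,\ldots,a_{n-1})$ consists of the $n!$ orderings of the (distinct) roots of $z^n+\sum a_i z^i$. To establish local triviality, I fix $b\in B^n$ with roots $w_1,\ldots,w_n$ and choose pairwise disjoint open discs $D_i$ around each $w_i$. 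By Rouch\'e's theorem applied on each $\partial D_i$, there is an open neighbourhood $V$ of $b$ in $B^n$ such that every $b'\in V$ has exactly one root in each $D_i$. The $n!$ sheets of $\pi^{-1}(V)$ are then indexed by permutations $\tau$ of $\{1,\ldots,n\}$: the $\tau$-sheet is $\{(b',z_1,\ldots,z_n) : z_i\in D_{\tau(i)}\}$, and each such sheet projects homeomorphically onto $V$.

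For part (2), the coefficients of $f$ assemble into a continuous map $\Phi:X\to\C^n$ given by $x\mapsto(a_0(x),\ldots,a_{n-1}(x))$. Since $f$ is a Weierstrass polynomial, $f_x$ has $n$ distinct roots for every $x$, so $\delta(a_0(x),\ldots,a_{n-1}(x))\neq 0$ and $\Phi$ factors through $B^n$. Directly from the definitions, $S_f$ coincides with the fibre product $X\times_{B^n} S$, and $\tilde{q}$ is the pullback of $\pi$ along $\Phi$. Since the pullback of a covering map along a continuous map is again a covering map of the same degree, it follows that $\tilde{q}:S_f\to X$ is an $n!$-fold covering.

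The main obstacle is the verification of local triviality in part (1), whose essential analytical input is the continuous dependence of simple roots on coefficients; I would handle this either by Rouch\'e's theorem as above or, equivalently, by the holomorphic implicit function theorem applied to $f(z)=0$ near each simple root. Once part (1) is settled, part (2) is a purely formal consequence of the pullback construction for covering spaces and requires no further analytic work.
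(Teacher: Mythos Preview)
Your proposal is correct and follows essentially the same route as the paper. For part (2) your argument is identical to the paper's: form the coefficient map $X\to B^n$ and exhibit $S_f$ as the pullback of $\pi:S\to B^n$; for part (1) the paper simply cites Hansen's book for the classical fact that the Vieta map is an $n!$-fold covering, whereas you spell out the standard Rouch\'e/implicit-function argument that underlies that citation.
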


\begin{proof}
\begin{enumerate}
\item Similar to \cite[pg 88, Lemma 2.2]{Ha}.

\item
Let $f=z^{n}+\sum_{i=0}^{n-1}a_{i}z^{i}$ and $a:X \rightarrow
B^{n}:$ $$a(x):=(a_{0}(x),\cdots ,a_{n-1}(x)).$$ Then we get the
induced fibre bundle
$$\xymatrix{a^{*}(S) \ar[r]^{a^{*}} \ar[d]_{\pi^{*}} & S \ar[d]^{\pi} \\
X \ar[r]^{a} & B^{n}. }$$ Define $a':S_{f} \rightarrow a^{*}(S)$ by
$$a'(x,z_{1},\cdots ,z_{n}):=(x,a(x),z_{1},\cdots ,z_{n}).$$ Then $a'$
is a homeomorphism such that the diagram $$\xymatrix{
S_{f}\ar[rr]^{a'} \ar[rd]_{\tilde{q}} & & a^{*}(S) \ar[ld]^{\pi^{*}} \\
& X &
}$$ commutes. Hence $S_{f} \stackrel{\tilde{q}}{\rightarrow} X$ is an $n!$-fold covering space.
\end{enumerate}
\end{proof}

\begin{proposition}
Let $E'$ be a connected component of $S_{f}$ and
$q:=\tilde{q}\vert_{E'}$. Then $E'\stackrel{q}{\rightarrow} X$ is a
splitting covering.
\end{proposition}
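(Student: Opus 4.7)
The plan is to verify the two defining properties of a splitting covering directly, following the template of Theorem \ref{split}(1). First I would note that since $S_f \stackrel{\tilde{q}}{\to} X$ is an $n!$-fold covering and $E'$ is a connected component of $S_f$, the set $E'$ is clopen in $S_f$; the covering charts for $\tilde{q}$ restrict to $E'$ (each sheet either sits entirely in $E'$ or entirely in its complement), so $q = \tilde{q}\vert_{E'} : E' \to X$ is itself a connected covering space.

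Next I would check that $f$ splits in $E'$. For $i = 1, \ldots, n$ the coordinate projection $\alpha_i : E' \to \C$, $\alpha_i(x, z_1, \ldots, z_n) := z_i$, is continuous. By the definition of $S_f$ we have $f_x(\alpha_i) = 0$ and $\alpha_i \neq \alpha_j$ everywhere for $i \neq j$, so $q^{*}f = \prod_{i=1}^{n}(z - \alpha_i)$ in $\mathcal{C}(E')[z]$; thus $f$ splits in $E'$ with $n$ distinct roots.

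The core step is minimality. Let $Y \stackrel{p}{\to} X$ be any path-connected covering in which $f$ splits, with roots $\beta_1, \ldots, \beta_n$. Define
$$\psi : Y \longrightarrow S_f, \quad \psi(y) := (p(y), \beta_1(y), \ldots, \beta_n(y)).$$
This is well-defined into $S_f$ because the $\beta_i$ are pairwise distinct, continuous, and satisfies $\tilde{q} \circ \psi = p$. The image $\psi(Y)$ is connected, hence contained in some single component $E''$ of $S_f$. Here is where I invoke a symmetry: the symmetric group $S_n$ acts on $S_f$ over $X$ by permuting the last $n$ coordinates. This action is free (as the $z_i$ are distinct) and transitive on each fibre $\tilde{q}^{-1}(x_0)$ (which has exactly $n!$ points, one per ordering of the roots), so it acts transitively on the set of connected components of $S_f$. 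Consequently I can choose a permutation $\sigma$ with $\sigma(E'') = E'$, and after relabeling the roots as $\beta_{\sigma(1)}, \ldots, \beta_{\sigma(n)}$ I may assume $\psi(Y) \subset E'$. Then the argument in the proof of Theorem \ref{split}(1), comparing trivializing neighbourhoods of $E' \to X$ and $Y \to X$, shows $\psi : Y \to E'$ is a covering map, establishing minimality.

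The main obstacle I anticipate is the reordering step: one has to justify why the image of $Y$ may be placed in the \emph{prescribed} component $E'$ rather than some other component, and this relies on the transitivity of the $S_n$-action on components of $S_f$. The rest is a faithful transcription of the argument used for $E_f$ in Theorem \ref{split}, so no new ideas beyond this symmetry observation are needed.
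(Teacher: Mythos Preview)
Your proof is correct and follows the same strategy as the paper, which simply notes that $E'\to X$ is a covering, that the coordinate projections $\alpha_i$ split $f$, and then defers to the argument of Theorem~\ref{split}(1). The only difference is in the reordering step: the paper's template fixes a base point $(x_0,z_{0,1},\dots,z_{0,n})\in E'$ first and then permutes the $\beta_i$ so that $\beta_j(y_0)=z_{0,j}$, which immediately forces $\psi(y_0)\in E'$; your detour through transitivity of the $S_n$-action on components of $S_f$ reaches the same conclusion but is a little more work than necessary.
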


\begin{proof}
By the previous lemma, $E'\stackrel{q}{\rightarrow} X$ is a covering
space. Moreover, $f$ splits in $E'$ with roots $\alpha_{1},\cdots
,\alpha_{n}$ where $\alpha_{i}$ is the projection of the
$(i+1)^{th}$ component, $i=1,\cdots ,n$. The result follows as in
the proof of Theorem~\ref{split}.
\end{proof}

Observe that $S \stackrel{\pi}{\rightarrow} B^{n}$ is a Galois
covering since the map
$(a=(a_{0},\cdots ,a_{n-1}),z_{1},\cdots ,z_{n})\mapsto
(a,z_{\sigma(1)},\cdots ,z_{\sigma(n)})$ is a covering
transformation for each $\sigma \in S_{n}$. Consequently, $S \stackrel{\pi}{\rightarrow} B^{n}$
becomes a locally trivial principal $A(S/B^{n})$-bundle
($A(S/B^{n})$ with discrete topology). Let's recall a classical
result about principal G-bundles (\cite[pg 51, Theorem 9.9]{Hu}).

\begin{proposition}
Let $\xi$ be a numerable principal G-bundle over $B$ (which is true
whenever $B$ is Hausdorff and paracompact), and let $f_{t}:B'
\rightarrow B$ be a homotopy. Then the principal G-bundles
$f_{0}^{*}(\xi)$ and $f_{1}^{*}(\xi)$ are isomorphic over
$B'$.
\end{proposition}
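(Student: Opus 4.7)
The natural strategy is to reduce the statement to a cylinder lemma and then prove the cylinder lemma via a partition-of-unity argument. First I would assemble the homotopy into a single continuous map $F : B' \times I \to B$ with $F(\cdot, 0) = f_0$ and $F(\cdot, 1) = f_1$, and pull $\xi$ back to obtain a numerable principal $G$-bundle $\eta := F^{*}\xi$ over $B' \times I$. Since $f_0^{*}\xi$ and $f_1^{*}\xi$ are the restrictions of $\eta$ to $B' \times \{0\}$ and $B' \times \{1\}$ respectively, it suffices to prove the following \emph{cylinder lemma}: for any numerable principal $G$-bundle $\eta$ over $B' \times I$, the bundle $\eta$ is isomorphic to the pullback $\mathrm{pr}^{*}(\eta|_{B' \times \{0\}})$ under the projection $\mathrm{pr} : B' \times I \to B'$. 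Restricting the resulting isomorphism to the top and bottom faces then yields $f_0^{*}\xi \cong f_1^{*}\xi$.

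The first step of the cylinder lemma is a trivialization-on-tubes statement: there exist a countable locally finite open cover $\{V_n\}_{n \geq 1}$ of $B'$ with a subordinate partition of unity $\{u_n\}$, such that $\eta$ is trivial over each $V_n \times I$. One obtains this by starting from any numerable trivializing cover of $\eta$, restricting to each vertical slice $\{b\} \times I$ (which is compact), applying a Lebesgue-number argument to cover the slice by finitely many trivializing open sets of the form $U_j \times J_j$ with $J_j \subset I$, and then amalgamating these finitely many local trivializations over $V \times I$ for a small neighborhood $V$ of $b$; the numerability hypothesis supplies the required global partition of unity.

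Set $s_n := u_1 + \cdots + u_n$, so $s_0 \equiv 0$ and $s_n$ is eventually constant near each point. For each $n$, use the trivialization of $\eta$ over $V_n \times I$ to define a $G$-equivariant self-map $\Phi_n$ of $\eta$ covering the map $(b, t) \mapsto (b, \max(t - u_n(b), 0))$, extended by the identity outside $V_n \times I$ (this is consistent because $u_n$ vanishes on $\partial V_n$). Since the partition of unity is locally finite, only finitely many $\Phi_n$ differ from the identity in a neighborhood of any given point, so the infinite composition $\Phi := \cdots \Phi_2 \Phi_1$ is a well-defined continuous $G$-equivariant map covering $(b, t) \mapsto (b, 0)$, and one checks that it induces the desired isomorphism $\eta \cong \mathrm{pr}^{*}(\eta|_{B' \times \{0\}})$. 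The main obstacle is the careful verification that $\Phi$ is a well-defined bundle morphism across the boundaries of the trivializing tubes, since the $\Phi_n$ are defined by patching local trivializations that need not agree globally; this is precisely the point at which numerability, rather than plain paracompactness, becomes indispensable, and it is what lets one glue the fibrewise contractions into a genuine principal-bundle isomorphism.
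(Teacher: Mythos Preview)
The paper does not actually prove this proposition: it is quoted verbatim as a classical result from Husemoller's \emph{Fibre Bundles} (\cite[pg 51, Theorem 9.9]{Hu}) and invoked without argument. Your plan is essentially the standard proof given in that reference---reduction to the cylinder lemma, construction of tube trivializations via compactness of $I$, and the locally finite composition of fibrewise retractions built from a subordinate partition of unity---so there is nothing to compare against in the paper itself. One small remark: your closing sentence slightly overstates the role of numerability versus paracompactness; the real point is that numerability of $\xi$ guarantees a trivializing cover of $B$ admitting a partition of unity, which after pullback and the tube argument yields the locally finite partition $\{u_n\}$ on $B'$ you need, and the gluing of the $\Phi_n$ across tube boundaries works simply because each $\Phi_n$ is the identity where $u_n$ vanishes.
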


\begin{corollary}\label{homotopy}
Let $f=z^{n}+\sum_{i=0}^{n-1}a_{i}z^{i}$ and
$g=z^{n}+\sum_{i=0}^{n-1}b_{i}z^{i}$ be two Weierstrass polynomials
on a Hausdorff and paracompact space $X$ and $a,b:X \rightarrow
B^{n}:$ $$a(x):=(a_{0}(x),\cdots ,a_{n-1}(x)), \;
b(x):=(b_{0}(x),\cdots ,b_{n-1}(x)).$$ Let
$E_{a}\stackrel{q_{a}}{\rightarrow} X$ and
$E_{b}\stackrel{q_{b}}{\rightarrow} X$ be the splitting cover of $f$
and $g$ respectively. If $a$ and $b$ are homotopic as maps of $X$
into $B^{n}$, then $E_{a}\stackrel{q_{a}}{\rightarrow} X$ and
$E_{b}\stackrel{q_{b}}{\rightarrow} X$ are equivalent covering
spaces.
\end{corollary}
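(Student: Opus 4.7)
The plan is to reduce the statement to the cited homotopy-invariance result for numerable principal $G$-bundles, applied to the Galois covering $S\stackrel{\pi}{\rightarrow}B^{n}$, whose deck group is $S_{n}$ acting by permutation of the last $n$ coordinates. Since $X$ is Hausdorff and paracompact, the pulled-back bundles $a^{*}(S)$ and $b^{*}(S)$ are numerable principal $S_{n}$-bundles over $X$. First I would record that the previous lemma already supplies homeomorphisms $S_{f}\cong a^{*}(S)$ and $S_{g}\cong b^{*}(S)$ over $X$ via the maps $a'$ and $b'$, and that these identifications are $S_{n}$-equivariant: this is immediate from the formula $a'(x,z_{1},\ldots,z_{n})=(x,a(x),z_{1},\ldots,z_{n})$, since both sides transform identically under any permutation of the $z_{i}$'s.

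With these identifications in place, the cited proposition gives a principal $S_{n}$-bundle isomorphism $\Phi:a^{*}(S)\rightarrow b^{*}(S)$ over $X$, which transports to an $S_{n}$-equivariant homeomorphism $\widetilde{\Phi}:S_{f}\rightarrow S_{g}$ with $\tilde{q}_{g}\circ\widetilde{\Phi}=\tilde{q}_{f}$; in particular $\widetilde{\Phi}$ is an isomorphism of $n!$-fold covering spaces over $X$. Being a homeomorphism, it sends each connected component of $S_{f}$ onto a connected component of $S_{g}$. Applying this to the component $E_{a}$, the image $\widetilde{\Phi}(E_{a})$ is a connected component of $S_{g}$, and the restriction of $\widetilde{\Phi}$ gives a covering isomorphism $E_{a}\cong\widetilde{\Phi}(E_{a})$ over $X$.

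To finish, I would invoke the proposition immediately preceding this corollary: every connected component of $S_{g}$ is itself a splitting covering of $g$. Then by the uniqueness of splitting coverings (Theorem~\ref{split}(2)), $\widetilde{\Phi}(E_{a})$ is isomorphic to $E_{b}$ as a covering space of $X$, and composing with the previous isomorphism yields $E_{a}\cong E_{b}$ over $X$. The only step that is not formal is the equivariance check for $a'$ and $b'$; the real content of the argument is carried by the homotopy-invariance proposition for numerable principal bundles, which is available precisely because of the Hausdorff/paracompact hypothesis on $X$.
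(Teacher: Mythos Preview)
Your argument is correct and is precisely the proof the paper leaves implicit: the corollary is stated without proof, but the preceding sequence (identification $S_f\cong a^*(S)$ in the lemma, the proposition that each component of $S_f$ is a splitting covering, the observation that $S\to B^n$ is a Galois/principal $S_n$-covering, and the cited homotopy-invariance proposition) is set up exactly so that this deduction goes through as you wrote it. One minor simplification: you do not actually need to verify $S_n$-equivariance of $a'$ and $b'$, since the lemma already gives $S_f\cong a^*(S)$ and $S_g\cong b^*(S)$ as covering spaces over $X$, and the principal-bundle isomorphism $a^*(S)\cong b^*(S)$ is in particular a covering isomorphism; composing these and then tracking a component, together with uniqueness of splitting coverings, finishes the argument without any equivariance check.
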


\section{Semi-topological Galois groups}
All rings are assumed to be commutative rings with identity if
without mentioned explicitly.

\begin{definition}
Let $\bar{T}$ be a ring and $T$ be a subring of $\bar{T}$. Define
\begin{align*}
Aut_{T}(\bar{T})&= \lbrace \phi :\bar{T} \rightarrow \bar{T} \mid \phi \; is \; a \; T-algebra \; automorphism \rbrace \\
&= \lbrace \phi :\bar{T} \rightarrow \bar{T} \mid \phi \; is \; a \; ring \; automorphism \; such \; that \; \phi(x)=x, \; \forall x \in T \rbrace.
\end{align*}

Let $f=f_{x}(z)=z^{n}+a_{n-1}(x)z^{n-1}+\cdots +a_{0}(x)$ be a
Weierstrass polynomial on $X$, and let
$E_{f}\stackrel{q}{\rightarrow} X$ be the splitting covering. We
define $$R=q^{*}\mathcal{C}(X)= \lbrace \gamma \circ q : \gamma \in
\mathcal{C}(X) \rbrace,$$ which is a subring of
$\mathcal{C}(E_{f}).$ The \textbf{semi-topological Galois group} of
$f$ is defined to be
$$G_{f}:=Aut_{R}(R[\alpha_{1},\cdots ,\alpha_{n}])$$ where
$\alpha_{1},\cdots ,\alpha_{n}:E_{f}\rightarrow \C$ are the
solutions of $f$.
\end{definition}

    Although the domain of solutions is mentioned to be the splitting covering in the definition of semi-topological group of $f$, it is, in fact, not important. To be precise, we have the following proposition.

\begin{proposition}\label{indep}
    Let $E_{f}\stackrel{q}{\rightarrow} X$ be the splitting covering of $f$ with roots $\alpha_{1}, \cdots , \alpha_{n}: E_{f}\rightarrow \C$. Suppose $f$ splits on $Y \stackrel{p}{\rightarrow}X$ with solutions $\alpha_{1}',\cdots ,\alpha_{n}':Y \rightarrow \C$.
Then there exists an isomorphism $\Phi :q^{*}\mathcal{C}(X)[\alpha_{1},\cdots ,\alpha_{n}] \rightarrow p^{*}\mathcal{C}(X)[\alpha_{1}',\cdots ,\alpha_{n}']$
such that $\Phi(q^{*}\mathcal{C}(X))=p^{*}\mathcal{C}(X)$, $\Phi(\lbrace \alpha_{1},\cdots ,\alpha_{n}\rbrace)=\lbrace \alpha_{1}',\cdots ,\alpha_{n}'\rbrace$,
and hence, $$ G_{f}\cong Aut_{p^{*}\mathcal{C}(X)}p^{*}\mathcal{C}(X)[\alpha_{1}',\cdots ,\alpha_{n}'].$$
\end{proposition}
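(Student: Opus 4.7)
The plan is to build $\Phi$ directly from the pullback along a covering map $\pi : Y \to E_f$ provided by the minimality of the splitting covering, and then show that this pullback carries the relevant generators to the relevant generators.

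First I would apply Theorem~\ref{split}(1): since $E_f \xrightarrow{q} X$ is a splitting covering and $f$ also splits on $Y \xrightarrow{p} X$, there is a covering map $\pi : Y \to E_f$ with $q \circ \pi = p$. Pulling back continuous functions then gives a ring homomorphism $\pi^* : \mathcal{C}(E_f) \to \mathcal{C}(Y)$. Because $\pi$ is a covering map between path-connected spaces it is surjective, so $\pi^*$ is injective. I take $\Phi$ to be the restriction of $\pi^*$ to the subring $q^{*}\mathcal{C}(X)[\alpha_1,\ldots,\alpha_n]$.

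Next I would check that $\Phi$ lands in $p^{*}\mathcal{C}(X)[\alpha'_1,\ldots,\alpha'_n]$ with the stated properties. For $\gamma \in \mathcal{C}(X)$ we have $\Phi(q^{*}\gamma) = \gamma \circ q \circ \pi = \gamma \circ p = p^{*}\gamma$, which gives the equality $\Phi(q^{*}\mathcal{C}(X)) = p^{*}\mathcal{C}(X)$. For the roots, $\pi^{*}\alpha_i$ is a root of $p^{*}f = \pi^{*}q^{*}f$, so it is one of the $n$ roots of $p^*f$ on $Y$; by the proposition stating that any two roots of $p^{*}f$ on a connected space are either equal everywhere or nowhere, each $\pi^{*}\alpha_i$ coincides globally with some $\alpha'_{\sigma(i)}$. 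Because the $\alpha_i(\tilde x_0)$ are pairwise distinct and $\pi^*$ is injective, $\sigma$ is a permutation in $S_n$, so $\Phi(\{\alpha_1,\ldots,\alpha_n\}) = \{\alpha'_1,\ldots,\alpha'_n\}$. Surjectivity of $\Phi$ onto $p^{*}\mathcal{C}(X)[\alpha'_1,\ldots,\alpha'_n]$ then follows because the image contains a generating set, and injectivity was already observed.

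Finally, since $\Phi$ is a ring isomorphism sending $q^{*}\mathcal{C}(X)$ onto $p^{*}\mathcal{C}(X)$, conjugation by $\Phi$ gives a group isomorphism
\[
\mathrm{Aut}_{q^{*}\mathcal{C}(X)}\bigl(q^{*}\mathcal{C}(X)[\alpha_1,\ldots,\alpha_n]\bigr) \;\xrightarrow{\;\sim\;}\; \mathrm{Aut}_{p^{*}\mathcal{C}(X)}\bigl(p^{*}\mathcal{C}(X)[\alpha'_1,\ldots,\alpha'_n]\bigr),
\]
which is the claimed identification of $G_f$. The only real subtlety is the identification $\pi^{*}\alpha_i = \alpha'_{\sigma(i)}$; for this one needs the earlier connectedness argument applied to $Y$, which is legitimate under the paper's standing assumption that $Y$ is path-connected.
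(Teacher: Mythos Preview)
Your proof is correct and follows essentially the same route as the paper: both take $\Phi=\pi^{*}$ for the covering map $\pi:Y\to E_f$ supplied by the minimality of the splitting covering, verify that $\pi^{*}$ carries $q^{*}\mathcal{C}(X)$ onto $p^{*}\mathcal{C}(X)$ and the $\alpha_i$ to the $\alpha'_j$, and then obtain the group isomorphism by conjugation $\phi\mapsto \pi^{*}\circ\phi\circ(\pi^{*})^{-1}$. Your write-up is in fact a bit more explicit than the paper's in justifying injectivity of $\pi^{*}$ and in invoking the ``equal everywhere or nowhere'' proposition to match the roots.
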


\begin{proof}
    By the definition of splitting coverings, there is a covering map $\pi :Y \rightarrow E_{f}$ such that $$\xymatrix{
  Y \ar[dr]_{p} \ar[rr]^{\pi} & & E_{f} \ar[ld]^{q} \\
 & X &  }$$ commutes.
Observe that $\pi^{*}\alpha_{1},\cdots,\pi^{*}\alpha_{n}$ are all roots as for $i=1,\cdots,n$, $ (p^{*}f)(\pi^{*}\alpha_{i}) =(\pi^{*}q^{*}f)(\pi^{*}\alpha_{i})=\pi^{*}((q^{*}f)(\alpha_{i}))=0$
and $\pi^{*}\alpha_{1},\cdots,\pi^{*}\alpha_{n}$ are distinct.
Therefore, $\Phi := \pi^{*}:q^{*}\mathcal{C}(X)[\alpha_{1},\cdots,\alpha_{n}]\rightarrow p^{*}\mathcal{C}(X)[\alpha_{1}',\cdots,\alpha_{n}']$ is an isomorphism which carries $q^{*}\mathcal{C}(X)$ onto $p^{*}\mathcal{C}(X)$ and $\lbrace \alpha_{1},\cdots,\alpha_{n} \rbrace$ to $\lbrace \alpha_{1}',\cdots,\alpha_{n}' \rbrace$. As a result, we obtain an isomorphism $\Psi : G_{f} \rightarrow Aut_{p^{*}\mathcal{C}(X)}p^{*}\mathcal{C}(X)[\alpha_{1}',\cdots,\alpha_{n}']$ which is defined by $\Psi(\phi)(g)=(\pi^{*})\phi((\pi^{*})^{-1}g).$
\end{proof}

\begin{example}\label{ExS}
Let $f_x(z)=z^{n}-x$ for $x \in S^{1}$ where  $n \in \N$. Then $f$
is a Weierstrass polynomial, and its solution space $E$ is an n-fold
covering of $S^{1}$.

Let $p:\R \rightarrow X$  be defined by $p(s)=e^{2\pi si}$ which is
the universal covering space of $S^{1}$.
$(p^{*}f)_s(z)=z^{n}-e^{2\pi si}$, where $s \in \R$. It is easy to
see that roots of $p^*f$ are $\alpha_{j}(s)=e^{\frac{2\pi
i(s+j-1)}{n}}, \; j=1,\cdots ,n$. Note that for $j=1,\cdots ,n-1$,
$e^{\frac{2\pi i}{n}}\alpha_{j}=\alpha_{j+1}$, and the constant
function $e^{\frac{2\pi i}{n}}$ is an element in
$R=p^{*}\mathcal{C}(S^{1}).$ Therefore, for $\phi \in G_{f}$,
$$e^{\frac{2\pi i}{n}}\phi(\alpha_{j})=\phi(e^{\frac{2\pi
i}{n}}\alpha_{j})=\phi(\alpha_{j+1}).$$ Hence $\phi$ is uniquely
determined by $\phi(\alpha_{1})$. Let $\sigma :R[\alpha_{1},\cdots
,\alpha_{n}] \rightarrow R[\alpha_{1},\cdots ,\alpha_{n}]$ be
defined by
$$\sigma(\tilde{\gamma})(s):=\tilde{\gamma}(s+1).$$  Then for
$p^{*}\gamma \in R$, $$\sigma(p^{*}\gamma)(s)=\sigma(\gamma \circ
p)(s)=\gamma (p(s+1))=\gamma (p(s))=(p^{*}\gamma)(s).$$ Hence
$\sigma\vert_{R}=id_{R}$. For $j=1,\cdots ,n-1$,
$$\sigma(\alpha_{j})=\alpha_{j+1}, \sigma(\alpha_{n})=\alpha_{1}.$$ Therefore $\sigma \in G_{f}$.
Furthermore, for $j=0,1,\cdots ,n-1$,
$\sigma^{j}(\alpha_{1})=\alpha_{1+j}$, and $\sigma^{n}=id$. Thus,
from the above observations, we have $$G_{f} \cong <\sigma>
\cong \Z_{n}.$$
\end{example}

\begin{proposition}(Functoriality)
Suppose that $\lambda:Y \rightarrow X$ is a covering map and $f_1$ is a
Weierstrass polynomial of degree $n$ in $X$. Let $f_2=\lambda^*f_1$.
\begin{enumerate}
\item
There is a covering map $\widetilde{\lambda}:E_{f_2} \rightarrow
E_{f_1}$ such that the following diagram commutes:
$$\xymatrix{E_{f_2} \ar[r]^{\widetilde{\lambda}} \ar[d] _{q} & E_{f_1}
\ar[d]^p\\
Y \ar[r]^{\lambda} & X}$$

\item
If $\alpha_1, \cdots , \alpha_n:E_{f_1} \rightarrow \C$ are all the
roots of $p^*f_1$, then $\widetilde{\lambda}^*\alpha_1, \cdots ,
\widetilde{\lambda}^*\alpha_n$ are all the roots of $q^*(f_2)$.

\item
$\widetilde{\lambda}^*:p^*\mathcal{C}(X)[\alpha_1, \cdots , \alpha_n] \rightarrow
q^*\mathcal{C}(Y)[\widetilde{\lambda}^*\alpha_1, \cdots ,
\widetilde{\lambda}^*\alpha_n]$ is injective.

\item
The map $\widetilde{\lambda}$ induces a group monomorphism
$$\widehat{\lambda}:G_{f_2} \rightarrow G_{f_1}$$ defined by
$$\widehat{\lambda}(\phi)(\alpha)=(\widetilde{\lambda}^*)^{-1}\phi(\alpha\circ
\widetilde{\lambda})$$
\end{enumerate}
\end{proposition}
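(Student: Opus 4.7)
The plan is to handle the four parts sequentially, using the minimality clause in the definition of splitting covering to produce $\widetilde{\lambda}$ and then reducing everything else to functoriality of pullback plus a conjugation argument.

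For part (1), I would first observe that $E_{f_2} \xrightarrow{q} Y \xrightarrow{\lambda} X$ is a covering map and that $f_1$ splits on it: indeed $(\lambda q)^* f_1 = q^*(\lambda^* f_1) = q^* f_2$, and the latter splits on $E_{f_2}$ with roots $\alpha'_1, \ldots, \alpha'_n$ coming from the splitting covering $E_{f_2}$ of $f_2$. Since $E_{f_1} \to X$ is, by Theorem~\ref{split}, the splitting covering of $f_1$, the minimality property immediately furnishes a covering map $\widetilde{\lambda}: E_{f_2} \to E_{f_1}$ satisfying $p \circ \widetilde{\lambda} = \lambda \circ q$, which is the desired commutative square.

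For parts (2) and (3), direct computation suffices. Since $\widetilde{\lambda}^*$ is a ring homomorphism and $p^* f_1 = \prod_{i=1}^n (z - \alpha_i)$, we have $q^* f_2 = \widetilde{\lambda}^* p^* f_1 = \prod_{i=1}^n (z - \widetilde{\lambda}^* \alpha_i)$. The functions $\widetilde{\lambda}^* \alpha_i = \alpha_i \circ \widetilde{\lambda}$ are pairwise distinct because $\widetilde{\lambda}$, being a covering map onto the connected space $E_{f_1}$, is surjective; so they exhaust the $n$ roots of the degree-$n$ Weierstrass polynomial $q^* f_2$, proving (2). The same surjectivity makes $\widetilde{\lambda}^*$ injective at the level of continuous function rings, and its restriction to the subring $p^*\mathcal{C}(X)[\alpha_1, \ldots, \alpha_n]$ stays injective, giving (3).

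For part (4), the idea is to read $\widehat{\lambda}(\phi)$ as the conjugate of $\phi$ by $\widetilde{\lambda}^*$. Before the formula in the statement is meaningful, I must check that $\phi$ preserves the image of $\widetilde{\lambda}^*$ inside $q^*\mathcal{C}(Y)[\alpha'_1, \ldots, \alpha'_n]$. This rests on two observations: by part (2) the set $\{\widetilde{\lambda}^*\alpha_1, \ldots, \widetilde{\lambda}^*\alpha_n\}$ equals the root set $\{\alpha'_1, \ldots, \alpha'_n\}$ of $q^* f_2$, which $\phi$ merely permutes; and $\phi$ fixes $q^*\mathcal{C}(Y)$ pointwise, hence in particular its subring $\widetilde{\lambda}^*(p^*\mathcal{C}(X)) = q^*\lambda^*\mathcal{C}(X)$. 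Thus $\phi$ restricts to an automorphism of $\mathrm{im}(\widetilde{\lambda}^*)$, and $\widehat{\lambda}(\phi) := (\widetilde{\lambda}^*)^{-1} \circ \phi \circ \widetilde{\lambda}^*$ is a well-defined automorphism of $p^*\mathcal{C}(X)[\alpha_1, \ldots, \alpha_n]$ fixing $p^*\mathcal{C}(X)$, hence an element of $G_{f_1}$. Multiplicativity is automatic from the conjugation shape, and injectivity follows since $(\widetilde{\lambda}^*)^{-1} \phi \widetilde{\lambda}^* = \mathrm{id}$ forces $\phi$ to fix every $\widetilde{\lambda}^*\alpha_i$, hence every $\alpha'_j$, and together with its fixing $q^*\mathcal{C}(Y)$ this yields $\phi = \mathrm{id}$. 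The main obstacle I expect is precisely this well-definedness step, which hinges on identifying $\{\widetilde{\lambda}^*\alpha_i\}$ with the entire root set of $q^* f_2$ from part (2) and ultimately on the surjectivity of $\widetilde{\lambda}$; everything else is routine pullback bookkeeping.
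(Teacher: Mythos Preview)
Your proposal is correct and follows essentially the same route as the paper: minimality of the splitting covering for (1), direct pullback computation for (2) and (3) via surjectivity of $\widetilde{\lambda}$, and the conjugation description $\widehat{\lambda}(\phi)=(\widetilde{\lambda}^*)^{-1}\phi\,\widetilde{\lambda}^*$ for (4), with well-definedness coming from $\phi$ fixing $\widetilde{\lambda}^*p^*\mathcal{C}(X)\subset q^*\mathcal{C}(Y)$ and permuting the root set $\{\widetilde{\lambda}^*\alpha_i\}$. Your treatment of (2) and (4) is in fact slightly more explicit than the paper's (you spell out distinctness of the $\widetilde{\lambda}^*\alpha_i$ and the invariance of $\mathrm{im}\,\widetilde{\lambda}^*$), but the underlying argument is identical.
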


\begin{proof}
\begin{enumerate}
\item Since $f_2$ splits in $E_{f_2}$ and
$q^*f_2=q^*\lambda^*f_1=(\lambda\circ q)^*f_1$, hence $f_1$ also
splits in $E_{f_2}$. By the definition of splitting covering, there
is a covering map $\widetilde{\lambda}:E_{f_2} \rightarrow E_{f_1}$.

\item This follows from a direct computation:
$(q^*(f_2))_y((\widetilde{\lambda}^*\alpha_j)(y))=((\lambda\circ
q)^*f_1)_y(\alpha_j(\widetilde{\lambda}(y)))=((p\circ
\widetilde{\lambda})^*f_1)_y(\alpha_j(\widetilde{\lambda}(y)))=
(p^*f_1)_{\widetilde{\lambda}(y)}(\alpha_j(\widetilde{\lambda}(y)))=0$.

\item Since $\widetilde{\lambda}$ is surjective, so
$\widetilde{\lambda}^*:\mathcal{C}(E_{f_1}) \rightarrow
\mathcal{C}(E_{f_2})$ is injective. For $g\in \mathcal{C}(X)$,
$\widetilde{\lambda}^*(p^*g)=(p\circ
\widetilde{\lambda})^*g=(\lambda \circ q)^*g=q^*(\lambda^*g)\in
q^*\mathcal{C}(Y)$. Thus the restriction of $\widetilde{\lambda}^*$ to
$p^*\mathcal{C}(X)[\alpha_1, \cdots , \alpha_n]$ is also injective.

\item Observe that for $\phi \in G_{f_2}$, $\phi$ fixes $\widetilde{\lambda}^*p^*\mathcal{C}(X) \subset q^*\mathcal{C}(X)$; hence, $\phi(\widetilde{\lambda}^* (p^*\mathcal{C}(X)[\alpha_1, \cdots , \alpha_n])) \subset \widetilde{\lambda}^* (p^*\mathcal{C}(X)[\alpha_1, \cdots , \alpha_n])$. Therefore, $\widehat{\lambda}$ is well defined. It is a direct checking that $\widehat{\lambda}$ is a group
homomorphism. Suppose that
$\widehat{\lambda}(\phi_1)=\widehat{\lambda}(\phi_2)$. Then for
any $\alpha_j$,
$\phi_1(\widetilde{\lambda}^*\alpha_j)=\phi_1(\alpha_j\circ
\widetilde{\lambda})=\widetilde{\lambda}^*\widehat{\lambda}(\phi_1)(\alpha_j)=\widetilde{\lambda}^*\widehat{\lambda}(\phi_2)(\alpha_j)=
\phi_2(\widetilde{\lambda}^*\alpha_j)$. So $\phi_1=\phi_2$ and
hence $\widehat{\lambda}$ is injective.
\end{enumerate}
\end{proof}

\begin{proposition}\label{homo}
Let $f$ be a Weierstrass polynomial of degree $n$ in $X$ and split
in $Y$ where $Y\stackrel{q}{\rightarrow} X$ is a covering. Let
$\alpha_{1},\cdots ,\alpha_{n}$ be the roots of $q^*f$ in $Y$.
Suppose that $T$ is a subring of $q^{*}\mathcal{C}(X)$ and
$G=Aut_{T}T[\alpha_{1},\cdots ,\alpha_{n}]$. Then we have the
following group homomorphism $\omega_{Y,T}:A(Y/X)\rightarrow G$
defined by
$$\omega_{Y,T}(\Phi)(\beta)(y):=(\Phi^{-1})^{*}(\beta)(y):=\beta(\Phi^{-1}(y)).$$
In particular, we have a group homomorphism
$\omega_{f}=\omega_{E_{f},q^{*}\mathcal{C}(X)}:A(E_{f}/X)\rightarrow
G_{f}$ where $E_{f}\stackrel{q}{\rightarrow} X$ is the splitting
covering of $f$.
\end{proposition}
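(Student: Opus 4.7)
The plan is to check, in turn, that $\omega_{Y,T}(\Phi)$ really lands in $G$ (i.e.\ is a $T$-algebra automorphism of $T[\alpha_1,\dots,\alpha_n]$), and that the assignment $\Phi \mapsto \omega_{Y,T}(\Phi)$ is a group homomorphism. The key algebraic fact we will exploit throughout is that, since $\Phi \in A(Y/X)$ satisfies $q \circ \Phi = q$ (and hence also $q \circ \Phi^{-1} = q$), pullback by $\Phi^{-1}$ leaves every element of $q^{*}\mathcal{C}(X)$, and in particular every element of $T \subset q^{*}\mathcal{C}(X)$, pointwise fixed.

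First, I would define $\omega_{Y,T}(\Phi)$ by the formula $\omega_{Y,T}(\Phi)(\beta) := \beta \circ \Phi^{-1}$ on all of $\mathcal{C}(Y)$; this is a ring automorphism of $\mathcal{C}(Y)$ with inverse $\beta \mapsto \beta \circ \Phi$, because $\Phi$ is a homeomorphism of $Y$. The restriction to $T$ is the identity by the paragraph above. Next I need to see that this automorphism preserves $T[\alpha_1,\dots,\alpha_n]$. Since the coefficients of $q^{*}f$ lie in $q^{*}\mathcal{C}(X)$ and are therefore fixed by pullback along $\Phi^{-1}$, one gets
\[
(q^{*}f)(\alpha_i \circ \Phi^{-1}) \;=\; \bigl((q^{*}f)(\alpha_i)\bigr) \circ \Phi^{-1} \;=\; 0,
\]
so $\alpha_i \circ \Phi^{-1}$ is again a root of $q^{*}f$ in $Y$. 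Because $Y$ is connected and the roots of $q^{*}f$ are either equal everywhere or nowhere (Proposition on roots of $p^{*}f$), the function $\alpha_i \circ \Phi^{-1}$ must coincide with some $\alpha_{\sigma(i)}$. Therefore $\omega_{Y,T}(\Phi)$ carries the generating set $\{\alpha_1,\dots,\alpha_n\}$ into itself (and in fact permutes it), so it restricts to a $T$-algebra endomorphism of $T[\alpha_1,\dots,\alpha_n]$. Applying the same reasoning to $\Phi^{-1}$ produces a two-sided inverse, so $\omega_{Y,T}(\Phi) \in G$.

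Finally, for the homomorphism property I would compute directly: for $\Phi_1, \Phi_2 \in A(Y/X)$, $\beta \in T[\alpha_1,\dots,\alpha_n]$, and $y \in Y$,
\[
\omega_{Y,T}(\Phi_1 \Phi_2)(\beta)(y) \;=\; \beta\bigl((\Phi_1\Phi_2)^{-1}(y)\bigr) \;=\; \beta\bigl(\Phi_2^{-1}\Phi_1^{-1}(y)\bigr),
\]
while
\[
\bigl(\omega_{Y,T}(\Phi_1)\,\omega_{Y,T}(\Phi_2)\bigr)(\beta)(y) \;=\; \omega_{Y,T}(\Phi_2)(\beta)\bigl(\Phi_1^{-1}(y)\bigr) \;=\; \beta\bigl(\Phi_2^{-1}\Phi_1^{-1}(y)\bigr),
\]
and the two agree; this is exactly the reason the definition uses $\Phi^{-1}$ rather than $\Phi$. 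The special case $\omega_f = \omega_{E_f, q^{*}\mathcal{C}(X)}$ then follows immediately by specializing $Y = E_f$ and $T = q^{*}\mathcal{C}(X)$.

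There is no real obstacle here; the only point worth care is verifying the well-definedness, i.e.\ that $\omega_{Y,T}(\Phi)$ actually sends $T[\alpha_1,\dots,\alpha_n]$ into itself, which is where the covering-transformation identity $q \circ \Phi = q$ combines with the earlier fact that roots of $q^{*}f$ are either globally equal or globally distinct.
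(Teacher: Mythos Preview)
Your proof is correct and follows the same approach as the paper's proof, which is extremely terse: it simply asserts that $(\Phi^{-1})^*$ is a ring automorphism of $T[\alpha_1,\dots,\alpha_n]$ fixing $T$. You have filled in the details the paper leaves implicit, in particular the verification that $(\Phi^{-1})^*$ permutes the roots $\alpha_1,\dots,\alpha_n$ (using that the coefficients of $q^*f$ lie in $q^*\mathcal{C}(X)$ and are therefore fixed, together with the earlier proposition that distinct roots agree nowhere) and the explicit check of the homomorphism identity.
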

\begin{proof}
For $\Phi \in A(Y/X)$, it is easy to check that
$(\Phi^{-1})^*:T[\alpha_1, ..., \alpha_n] \rightarrow T[\alpha_1,
..., \alpha_n]$ is a ring automorphism. Since
$(\Phi^{-1})^{*}\vert_{T}=id_{T}$, $(\Phi^{-1})^*\in Aut_T
T[\alpha_1, ..., \alpha_n]$.
\end{proof}

\section{Galois correspondence}
\subsection{Correspondences between commutative rings and groups}
For a Weierstrass polynomial $f$ in $X$, we have two groups
associated to $f$: $A(E_f/X)$ and $G_f$. We will show that these two
groups are actually isomorphic. In order to do that, we use the
Galois theory of commutative rings developed by
Chase-Harrison-Rosenberg (\cite{CHR}, \cite{Gr}). All rings are
supposed to be commutative rings with identity and connected, that
is, have no idempotent other than 0 and 1 unless otherwise stated.

\begin{definition}
\begin{enumerate}
\item
A commutative $R$-algebra $S$ is separable if $S$ is a projective
$S^e$-module where $S^e=S\otimes_R S^0$ is the enveloping algebra of
$S$.

\item
Let $R$ be a subring of $S$ and $G$ be a finite subgroup of
$Aut_{R}(S)$. $S$ is said to be \textbf{G-Galois} over $R$ if
$R=S^{G}:=\lbrace x \in S : \sigma (x)=x, \; \forall \sigma \in
G \rbrace$, and there exist elements $x_{1},\cdots
,x_{n};y_{1},\cdots ,y_{n}$ of $S$ such that
$$\sum_{i=1}^{n}x_{i}\sigma(y_{i})=\delta_{e,\sigma}, \; \forall
\sigma \in G,$$ where $e$ is the identity in $G$ and
$$\delta_{e,\sigma}=\left\{
\begin{array}{cl}
1 & ,\mbox{if $\sigma = e$} \\
0 & ,\mbox{if $\sigma \neq e.$}
\end{array} \right. $$
\end{enumerate}
\end{definition}

\begin{theorem}(\cite[Theorem 2.3]{CHR})
Let S be G-Galois over R. Then there is a one-to-one
lattice-inverting correspondence between subgroups of $G$ and
separable $R$-subalgebras of $S$. If $T$ is a separable
$R$-subalgebra of $S$, then the corresponding subgroup is
$H_{T}=\lbrace \sigma \in G: \sigma (t)=t, \; \forall t \in T
\rbrace$. If $H$ is a subgroup of $G$, then the corresponding
separable $R$-subalgebra is $S^{H}=\lbrace x \in S : \sigma (x)=x,
\; \forall \sigma \in H \rbrace.$
\end{theorem}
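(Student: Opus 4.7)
My plan is to establish the correspondence by showing that the two maps $H \mapsto S^H$ and $T \mapsto H_T$ are mutually inverse; the lattice-inverting property is then automatic from the definitions. The central technical input I would isolate is an \emph{intermediate Galois theorem}: for every subgroup $H \leq G$, the extension $S^H \subseteq S$ is itself $H$-Galois. This is easy to obtain: the same coordinates $x_i, y_i$ witnessing the $G$-Galois structure on $S/R$ also witness the $H$-Galois structure on $S/S^H$, since the identity $\sum_i x_i \sigma(y_i) = \delta_{e,\sigma}$ for $\sigma \in G$ specializes to the required identity for $\sigma \in H$.

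The intermediate theorem yields the $S^H$-algebra isomorphism $S \otimes_{S^H} S \cong \prod_{h \in H} S$ sending $x \otimes y$ to $(x \cdot h(y))_{h \in H}$, and via this I would prove $H_{S^H} = H$. The inclusion $H \subseteq H_{S^H}$ is tautological. Conversely, any $\sigma \in G$ fixing $S^H$ is an $S^H$-algebra endomorphism of $S$, hence the map $x \otimes y \mapsto x \cdot \sigma(y)$ is an $S^H$-algebra homomorphism $S \otimes_{S^H} S \to S$ that under the isomorphism above becomes projection onto some factor indexed by $h_0 \in H$; comparing gives $\sigma(y) = h_0(y)$ for all $y \in S$, and faithfulness of the $G$-action on $S$ (itself a consequence of the $G$-Galois coordinates) forces $\sigma = h_0$ in $G$.

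For a separable $R$-subalgebra $T$, the inclusion $T \subseteq S^{H_T}$ is tautological. For the reverse, applying the intermediate theorem to $H_T$ presents $S$ as $H_T$-Galois over $S^{H_T}$, so $S$ is a projective $S^{H_T}$-module of rank $|H_T|$, whence $\mathrm{rank}_R(S^{H_T}) = [G : H_T]$. Separability of $T$ together with the decomposition $S \otimes_R S \cong \prod_{\sigma \in G} S$ forces the image of $T \otimes_R T$ there to be supported on a union of $H_T$-cosets, and a rank count yields $\mathrm{rank}_R(T) = [G : H_T]$ as well; since $T$ sits as an $R$-direct summand inside $S^{H_T}$ of the same rank, the inclusion $T \subseteq S^{H_T}$ must be an equality.

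The main obstacle I anticipate is this last step. Separability of $T$ cannot be dropped; its role is to produce, via the separability idempotent of $T \otimes_R T$, a clean decomposition of $T \otimes_R T$ inside $S \otimes_R S \cong \prod_\sigma S$ that descends through the Galois isomorphism and identifies $T$ with the fixed ring $S^{H_T}$. Correctly packaging this descent—so that one controls the rank of $T$ from the orbit structure on $G$ rather than from any a priori knowledge of $T$—is the technical heart of the theorem, and is where the full strength of the Chase--Harrison--Rosenberg framework is used.
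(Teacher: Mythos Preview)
The paper does not prove this statement at all: it is quoted as an external result from Chase--Harrison--Rosenberg \cite[Theorem~2.3]{CHR} and used as a black box throughout (in Lemma~\ref{RG}, Theorem~\ref{FT}, and Corollary~\ref{conti}). There is therefore no in-paper argument against which to compare your proposal.

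For what it is worth, your outline is essentially the strategy of the original CHR proof: the intermediate-Galois step ($S$ is $H$-Galois over $S^H$ via the same coordinates), the tensor decomposition $S\otimes_{S^H}S\cong\prod_{h\in H}S$, and the recovery of $H$ from $S^H$ by classifying $S^H$-algebra maps $S\to S$ are all standard and correctly stated. You are also right that the delicate direction is $T=S^{H_T}$ for separable $T$, and your rank-matching sketch is along the right lines, though in the actual CHR argument one needs somewhat more than a rank count (one uses that $S^{H_T}$ is separable over $R$ and that $T$, being separable, is an $R$-module direct summand of $S$, together with faithful flatness to conclude equality). Your own caveat about this step is well placed.
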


\begin{definition}\label{Notation}
Let $\alpha_{1},\cdots ,\alpha_{n}$ be some symbols. For $\sigma \in
S_{n}$, we denote $\sum_{i_{1},\cdots ,i_{n}}
\alpha_{\sigma(1)}^{i_{1}}\alpha_{\sigma(2)}^{i_{2}}\cdots
\alpha_{\sigma(n)}^{i_{n}}$ by $\sigma(\sum_{i_{1},\cdots ,i_{n}}
\alpha_{1}^{i_{1}}\alpha_{2}^{i_{2}}\cdots \alpha_{n}^{i_{n}})$. For
$(i_{1},\cdots ,i_{n-1}),(j_{1},\cdots ,j_{n-1}) \in \N^{n-1}$, we
define $$ (i_{1},\cdots ,i_{n-1}) \prec (j_{1},\cdots ,j_{n-1})
\Leftrightarrow i_{n-k}=j_{n-k}, \; i_{n-l} < j_{n-l}, \; k =
1,\cdots ,l-1, \; for \; some \; l.$$ We denote
$\alpha^{(i_{1},\cdots
,i_{n-1})}:=\alpha_{1}^{i_{1}}\alpha_{2}^{i_{2}}\cdots
\alpha_{n-1}^{i_{n-1}}$ for $i_{k}=0,\cdots ,n-k$. Then from the
above \textquotedblleft $\prec$\textquotedblright, we give an
ordering to $A=\lbrace (i_{1},\cdots ,i_{n-1}) : i_{k}=0,\cdots
,n-k, \; k=1,\cdots ,n-1 \rbrace$. We list all elements of $A$
according to the ordering:
$$I_1=(0, 0,..., 0), I_2=(1, 0, ..., 0), I_3=(2, 0, ..., 0), ...,
I_{n!}=(n-1, n-2, ..., 1)$$ and define
$$x_j=\alpha^{I_j}, j=1, 2, ..., n!$$
Let $\sigma_{i}:=(i \; i+1 \; \cdots  \; n) \in S_{n}$ (note that
the order of $\sigma_{i}$ is $n-i+1$.) and $\sigma^{(i_{1},\cdots
,i_{n-1})}:=\sigma_{1}^{i_{1}}\sigma_{2}^{i_{2}}\cdots
\sigma_{n-1}^{i_{n-1}}$ for $i_{k}=0,\cdots ,n-k$. Similarly, we
define $\phi_{i}=\sigma^{I_i}$ for $i=1, 2, ..., n!$ Finally, we
define $V_{n}$ to be the following $n! \times n!$ matrix:
$$V_{n}:=(\phi_{i}(x_{j}))_{i,j=1,\cdots ,n!}.$$
\end{definition}

\begin{example}
For $n =2$, $V_{2}=\left( \begin{array}{cc}
1 & \alpha_{1} \\
1 & \alpha_{2}
\end{array} \right)$, and note that $det(V_{2})=\alpha_{2}-\alpha_{1}$.
\end{example}

\begin{example}
For $n=3$, $x_{1}=1$, $x_{2}=\alpha_{1}$, $x_{3}=\alpha_{1}^{2}$, $x_{4}=\alpha_{2}$,
$x_{5}=\alpha_{1}\alpha_{2}$, $x_{6}=\alpha_{1}^{2}\alpha_{2}$, $\phi_{1}=id$,
$\phi_{2}=\sigma_{1}=(1 \; 2 \; 3)$, $\phi_{3}=\sigma_{1}^{2}=(1 \; 3 \; 2)$,
$\phi_{4}=\sigma_{2}=(2 \; 3)$, $\phi_{5}=\sigma_{1}\sigma_{2}=(1 \; 2)$,
$\phi_{6}=\sigma_{1}^{2}\sigma_{2}=(1 \; 3)$, and $$V_{3}=\left( \begin{array}{cccccc}
1 & \alpha_{1} & \alpha_{1}^{2} & \alpha_{2} & \alpha_{1}\alpha_{2} & \alpha_{1}^{2}\alpha_{2} \\
1 & \alpha_{2} & \alpha_{2}^{2} & \alpha_{3} & \alpha_{2}\alpha_{3} & \alpha_{2}^{2}\alpha_{3} \\
1 & \alpha_{3} & \alpha_{3}^{2} & \alpha_{1} & \alpha_{3}\alpha_{1} & \alpha_{3}^{2}\alpha_{1} \\
1 & \alpha_{1} & \alpha_{1}^{2} & \alpha_{3} & \alpha_{1}\alpha_{3} & \alpha_{1}^{2}\alpha_{3} \\
1 & \alpha_{2} & \alpha_{2}^{2} & \alpha_{1} & \alpha_{2}\alpha_{1} & \alpha_{2}^{2}\alpha_{1} \\
1 & \alpha_{3} & \alpha_{3}^{2} & \alpha_{2} & \alpha_{3}\alpha_{2} & \alpha_{3}^{2}\alpha_{2} \\
\end{array} \right).$$
Note that \begin{align*}
det(V_{3})& =det \left( \begin{array}{cccccc}
1 & \alpha_{1} & \alpha_{1}^{2} & \alpha_{2} & \alpha_{1}\alpha_{2} & \alpha_{1}^{2}\alpha_{2} \\
1 & \alpha_{2} & \alpha_{2}^{2} & \alpha_{3} & \alpha_{2}\alpha_{3} & \alpha_{2}^{2}\alpha_{3} \\
1 & \alpha_{3} & \alpha_{3}^{2} & \alpha_{1} & \alpha_{3}\alpha_{1} & \alpha_{3}^{2}\alpha_{1} \\
1 & \alpha_{1} & \alpha_{1}^{2} & \alpha_{3} & \alpha_{1}\alpha_{3} & \alpha_{1}^{2}\alpha_{3} \\
1 & \alpha_{2} & \alpha_{2}^{2} & \alpha_{1} & \alpha_{2}\alpha_{1} & \alpha_{2}^{2}\alpha_{1} \\
1 & \alpha_{3} & \alpha_{3}^{2} & \alpha_{2} & \alpha_{3}\alpha_{2} & \alpha_{3}^{2}\alpha_{2} \\
\end{array} \right) \\
& = det \left( \begin{array}{cccccc}
1 & \alpha_{1} & \alpha_{1}^{2} & \alpha_{2} & \alpha_{1}\alpha_{2} & \alpha_{1}^{2}\alpha_{2} \\
1 & \alpha_{2} & \alpha_{2}^{2} & \alpha_{3} & \alpha_{2}\alpha_{3} & \alpha_{2}^{2}\alpha_{3} \\
1 & \alpha_{3} & \alpha_{3}^{2} & \alpha_{1} & \alpha_{3}\alpha_{1} & \alpha_{3}^{2}\alpha_{1} \\
0 & 0 & 0 & (\alpha_{3}-\alpha_{2}) & \alpha_{1}(\alpha_{3}-\alpha_{2}) & \alpha_{1}^{2}(\alpha_{3}-\alpha_{2}) \\
0 & 0 & 0 & (\alpha_{1}-\alpha_{3}) & \alpha_{2}(\alpha_{1}-\alpha_{3}) & \alpha_{2}^{2}(\alpha_{1}-\alpha_{3}) \\
0 & 0 & 0 & (\alpha_{2}-\alpha_{1}) & \alpha_{3}(\alpha_{2}-\alpha_{1}) & \alpha_{3}^{2}(\alpha_{2}-\alpha_{1}) \\
\end{array} \right) \\
& =-[(\alpha_{2}-\alpha_{1})(\alpha_{3}-\alpha_{1})(\alpha_{3}-\alpha_{2})]^{3}.
\end{align*}
\end{example}

\begin{lemma}\label{LemS}
$$S_{n}=\lbrace \sigma^{(i_{1},\cdots ,i_{n-1})} : i_{k}=0,\cdots ,n-k \rbrace .$$
\end{lemma}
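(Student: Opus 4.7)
The plan is to prove this by induction on $n$, together with a counting argument. First I would observe that the index set $\{(i_1,\ldots,i_{n-1}) : 0 \le i_k \le n-k\}$ has cardinality $n(n-1)\cdots 2 = n!$, which matches $|S_n|$. Consequently it suffices to show that the map $(i_1,\ldots,i_{n-1}) \mapsto \sigma^{(i_1,\ldots,i_{n-1})}$ is surjective onto $S_n$; distinctness of the factorizations will then be automatic.

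For the inductive step, the key observation is that for every $k \ge 2$ the cycle $\sigma_k = (k \; k{+}1 \; \cdots \; n)$ fixes the element $1 \in \{1,\ldots,n\}$. Hence for any choice of $i_2,\ldots,i_{n-1}$, the product $\sigma_2^{i_2}\cdots\sigma_{n-1}^{i_{n-1}}$ fixes $1$, and applying $\sigma_1^{i_1}$ afterwards sends $1$ to $1+i_1 \pmod n$ (interpreted in $\{1,\ldots,n\}$). Given $\tau \in S_n$, I would therefore choose the unique $i_1 \in \{0,1,\ldots,n-1\}$ such that $\sigma_1^{i_1}(1) = \tau(1)$, namely $i_1 = \tau(1)-1$. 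Then $\tau' := \sigma_1^{-i_1}\tau$ fixes $1$ and can be viewed as an element of the symmetric group on $\{2,\ldots,n\}$, which is generated by the cycles $\sigma_2,\ldots,\sigma_{n-1}$ (these being the analogues of $\sigma_1,\ldots,\sigma_{n-2}$ for the set $\{2,\ldots,n\}$).

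The inductive hypothesis, applied to the symmetric group on $\{2,\ldots,n\}$, then yields $\tau' = \sigma_2^{i_2}\sigma_3^{i_3}\cdots\sigma_{n-1}^{i_{n-1}}$ with $0 \le i_k \le n-k$ for $k=2,\ldots,n-1$. Combining, $\tau = \sigma_1^{i_1}\sigma_2^{i_2}\cdots\sigma_{n-1}^{i_{n-1}} = \sigma^{(i_1,\ldots,i_{n-1})}$, as required. The base case $n=2$ is immediate since $\sigma_1 = (1\;2)$ and the tuples $(0),(1)$ give $\mathrm{id}$ and $(1\;2)$, exhausting $S_2$.

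There is no real obstacle here; the only thing to be careful about is the indexing convention and the observation that $\sigma_k$ for $k \ge 2$ fixes $1$, which is what allows $i_1$ to be determined independently of the remaining factors. Once surjectivity is established, the cardinality count $n!$ on both sides forces the decomposition to be unique, giving the stated equality.
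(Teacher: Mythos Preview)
Your proof is correct and rests on the same core idea as the paper's: a coset decomposition that peels off one cyclic factor at a time, combined with the cardinality count $n!$. The organizational difference is that you induct on $n$ and strip off the \emph{leftmost} factor $\sigma_1^{i_1}$ (using that $\sigma_k$ fixes $1$ for $k\ge 2$, so $i_1$ is determined by $\tau(1)$), reducing to the symmetric group on $\{2,\ldots,n\}$; the paper instead works inside a fixed $S_n$ and builds up from the \emph{right}, showing inductively that the products $\sigma_{n-l+1}^{i_{n-l+1}}\cdots\sigma_{n-1}^{i_{n-1}}$ exhaust the stabilizer of $\{1,\ldots,n-l\}$ by a coset count. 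Your left-to-right argument is slightly more direct, since it proves surjectivity constructively (and uniqueness then follows from the cardinality match), whereas the paper establishes the cardinality first via disjoint cosets and then concludes by inclusion; but the two are mirror images of the same idea.
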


\begin{proof}
Let $B_{l}=\lbrace \sigma^{(0,\cdots ,0,i_{n-l+1},\cdots ,i_{n-1})}
: i_{k}=0,\cdots ,n-k, \; for \; k=n-l+1,\cdots ,n-1 \rbrace$, for
$l=2,\cdots ,n$. We claim that $$ B_{l}= permu \lbrace n-l+1,\cdots
,n \rbrace := \lbrace \sigma \in S_{n} : \sigma(j)=j, \; \forall
j=1,\cdots ,n-l \rbrace.$$ The cases $l=1,2,3$ are obvious. Assume $
B_{l-1}= permu \lbrace n-l+2,\cdots ,n \rbrace$, $l \leqslant n$.
Then $B_{l-1} \leqslant S_{n}$, and we have a partition
$S_{n}/B_{l-1}$. Recall that for $\tau, \rho \in S_{n}$, $$\tau
B_{l-1}=\rho B_{l-1} \Leftrightarrow \tau \rho^{-1} \in B_{l-1}.$$
Also note that if $0 \leqslant p,q \leqslant l-1, \; p \neq q$, then
$(\sigma_{n-l+1}^{p})(\sigma_{n-l+1}^{q})^{-1}=\sigma_{n-l+1}^{p-q}$
is a cycle of length $l$, where $\sigma_{n-l+1}$ is defined as in
Definition~\ref{Notation}. Thus
$$(\sigma_{n-l+1}^{p})(\sigma_{n-l+1}^{q})^{-1}=\sigma_{n-l+1}^{p-q}
\in S_{n}-B_{l-1}.$$ Hence
$B_{l}=\coprod_{p=0}^{l-1}\sigma_{n-l+1}^{p}B_{l-1}$. Therefore,
$$\vert B_{l} \vert = l \vert B_{l-1} \vert = l \vert permu \lbrace
n-l+2,\cdots ,n \rbrace \vert = l!.$$ Moreover, just from the
definition, we have $B_{l} \subseteq permu \lbrace n-l+1,\cdots ,n
\rbrace$, and $\vert permu \lbrace n-l+1,\cdots ,n \rbrace \vert =
l!=\vert B_{l} \vert$. This implies $B_{l}= permu \lbrace
n-l+1,\cdots ,n \rbrace$. Thus, by induction, we proved what we
claimed. In particular, $$S_{n}=B_{n}=\lbrace \sigma^{(i_{1},\cdots
,i_{n-1})} : i_{k}=0,\cdots ,n-k , \; k=1,\cdots ,n \rbrace.$$
\end{proof}

\begin{lemma}\label{LemC}
Let $T$ be an integral domain. Suppose $\alpha_{1},\cdots
,\alpha_{n} \in T$ are distinct. Then for each $n \in \N$,
$det(V_{n}) \neq 0$.
\end{lemma}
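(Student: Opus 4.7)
The plan is to prove $\det(V_n)\neq 0$ by expressing it as a product of Vandermonde determinants on subsets of $\{\alpha_1,\ldots,\alpha_n\}$, each of which is nonzero in $T$ by the distinctness hypothesis. This will be carried out via a recursive block decomposition of $V_n$.

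First, I would partition the rows and columns of $V_n$ according to the value of the most significant coordinate $i_{n-1}\in\{0,1\}$, giving a $2\times 2$ block structure $V_n=\bigl(\begin{smallmatrix}A&B\\ C&D\end{smallmatrix}\bigr)$ with each block of size $n!/2$. Since $\sigma_{n-1}=(n-1\;n)$ fixes $\{1,\ldots,n-2\}$, a row indexed by $\phi=\phi'\sigma_{n-1}$ agrees with the row indexed by $\phi'$ on every column whose monomial does not involve $\alpha_{n-1}$, so $C=A$. A parallel computation on the right-hand columns shows $B=\mathrm{diag}(\alpha_{\phi'(n-1)})\,A$ and $D=\mathrm{diag}(\alpha_{\phi'(n)})\,A$. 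The row operation (bottom)$\leftarrow$(bottom)$-$(top) triangulates the matrix and yields
\[
\det(V_n)=(\det A)^{2}\cdot\prod_{\phi'}\bigl(\alpha_{\phi'(n)}-\alpha_{\phi'(n-1)}\bigr),
\]
where $\phi'$ ranges over all products $\sigma_1^{i_1}\cdots\sigma_{n-2}^{i_{n-2}}$; each factor in the product is nonzero by distinctness.

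The main obstacle is that the block $A$ is not itself a copy of $V_{n-1}$ (it has $n!/2$ rows, not $(n-1)!$), so induction cannot run directly on $n$. To handle this I would introduce the natural family $V_n^{(m)}$, $1\leq m\leq n-1$, whose rows and columns are indexed by tuples $(i_1,\ldots,i_m)$ with $i_k\in\{0,\ldots,n-k\}$ and whose $(\phi,x)$-entry is $\alpha_{\phi(1)}^{i_1}\cdots\alpha_{\phi(m)}^{i_m}$ for $\phi=\sigma_1^{i_1}\cdots\sigma_m^{i_m}$. Then $V_n^{(n-1)}=V_n$ and $A=V_n^{(n-2)}$. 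Applying the same block-decomposition idea to $V_n^{(m)}$ through the top coordinate $i_m\in\{0,\ldots,n-m\}$ gives an $(n-m+1)\times(n-m+1)$ block structure in which every block is a diagonal matrix (indexed by $\phi''=\sigma_1^{i_1}\cdots\sigma_{m-1}^{i_{m-1}}$) times $V_n^{(m-1)}$. Factoring out $V_n^{(m-1)}$ on the right and permuting block indices so that the $\phi''$-index becomes outermost gives
\[
\det V_n^{(m)}=\pm\bigl(\det V_n^{(m-1)}\bigr)^{n-m+1}\prod_{\phi''}\prod_{m\leq i<j\leq n}\bigl(\alpha_{\phi''(j)}-\alpha_{\phi''(i)}\bigr),
\]
and each inner Vandermonde factor is nonzero since $\phi''(m),\ldots,\phi''(n)$ are distinct elements of $\{1,\ldots,n\}$.

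The induction base is $V_n^{(1)}$, which is exactly the $n\times n$ Vandermonde matrix in $\alpha_1,\ldots,\alpha_n$ with determinant $\prod_{i<j}(\alpha_j-\alpha_i)\neq 0$. Induction on $m$ then gives $\det V_n^{(m)}\neq 0$ for all $m$, and in particular $\det V_n\neq 0$. The main technical points requiring care are verifying that $\sigma_m^{p}$ acts as the identity on $\{1,\ldots,m-1\}$ and sends $m$ to $m+p$ for $0\leq p\leq n-m$, and tracking (though not needing to evaluate) the sign introduced by the block reorderings, which is irrelevant to nonvanishing.
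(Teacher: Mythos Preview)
Your argument is correct and follows the same overall inductive scheme as the paper: both introduce the family of principal submatrices $V_n^{(m)}$ (the paper calls them $C_m$), anchor the induction at the ordinary Vandermonde $V_n^{(1)}=C_1$, and climb up to $V_n^{(n-1)}=V_n$. The difference lies in the inductive step. The paper performs block-row operations on $C_{k+1}$, which leaves a residual matrix $D$ whose determinant is not immediately recognizable; they then need a second, nested induction through auxiliary matrices $D_m$ and blocks $E^m_{r,s}$ to show $\det D\neq 0$. Your step is cleaner: you observe that the $(p,q)$ block of $V_n^{(m)}$ equals $\mathrm{diag}\bigl(\alpha_{\phi''(m+p)}^{\,q}\bigr)\cdot V_n^{(m-1)}$, factor $V_n^{(m)}=N\cdot\mathrm{diag}\bigl(V_n^{(m-1)},\ldots,V_n^{(m-1)}\bigr)$, and then note that a row/column permutation making $\phi''$ the outer index turns $N$ into a block-diagonal matrix whose diagonal blocks are ordinary Vandermonde matrices in $\alpha_{\phi''(m)},\ldots,\alpha_{\phi''(n)}$. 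This yields the closed recursion $\det V_n^{(m)}=\pm(\det V_n^{(m-1)})^{n-m+1}\prod_{\phi''}\prod_{m\le i<j\le n}(\alpha_{\phi''(j)}-\alpha_{\phi''(i)})$ in a single stroke, bypassing the paper's nested induction entirely. The payoff is a shorter proof with an explicit product formula (up to sign); the paper's approach, while more laborious, stays closer to elementary row reduction and does not require the block-factorization observation.
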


\begin{proof}
Let $C_{k}=C_{k,0}=(\phi_{i}(x_{j}))_{i,j=1,\cdots
,\frac{n!}{(n-k)!}}$, and $C_{k,l}=(\phi_{i}(x_{j}))_{i=1,\cdots
,\frac{n!}{(n-k)!},j=l\frac{n!}{(n-k)!}+1,\cdots
,(l+1)\frac{n!}{(n-k)!}}$, for $l=0,\cdots ,(n-k-1)$. Moreover, for
$\sigma \in S_{n}$ , we define
$$C_{k,l}\sigma:=(\phi_{i}(\sigma(x_{j})))_{i=1,\cdots ,\frac{n!}{(n-k)!},j=l\frac{n!}{(n-k)!}+1,\cdots ,(l+1)\frac{n!}{(n-k)!}}.$$
Then for $i_{k+1}=0,\cdots ,n-k-1$,
\begin{align*}
C_{k,l}\sigma_{k+1}^{i_{k+1}}&=(\phi_{i}(\sigma_{k+1}^{i_{k+1}}(x_{j})))_{i=1,\cdots ,\frac{n!}{(n-k)!},j=
l\frac{n!}{(n-k)!}+1,\cdots ,(l+1)\frac{n!}{(n-k)!}}\\
&=((\sigma^{(i_{1},\cdots ,i_{k},0,\cdots ,0)}\circ \sigma_{k+1}^{i_{k+1}})(x_{j}))_{I_i=(i_{1},\cdots ,i_{k}),i=1,\cdots ,\frac{n!}{(n-k)!},
j=l\frac{n!}{(n-k)!}+1,\cdots ,(l+1)\frac{n!}{(n-k)!}}\\
&=(\sigma^{(i_{1},\cdots ,i_{k},i_{k+1},0, \cdots
,0)}(x_{j}))_{I_i=(i_{1},\cdots ,i_{k}),i=1,\cdots ,\frac{n!}{(n-k)!},
j=l\frac{n!}{(n-k)!}+1,\cdots ,(l+1)\frac{n!}{(n-k)!}}\\
&=(\phi_i(x_j))_{i=(i_{k+1})\frac{n!}{(n-k)!}+1, ..., (i_{k+1}+1)\frac{n!}{(n-k)!}, j=l\frac{n!}{(n-k)!}+1,\cdots ,(l+1)\frac{n!}{(n-k)!}}
\end{align*}
Hence, by definition, we can divide $C_{k+1}$ into several blocks in the following way:
$$C_{k+1}=\left( \begin{array}{cccc}
C_{k,0} & C_{k,1} & \ldots & C_{k,(n-k-1)} \\
C_{k,0}\sigma_{k+1} & C_{k,1}\sigma_{k+1} & \ldots & C_{k,(n-k-1)}\sigma_{k+1} \\
\vdots & \vdots & \ddots & \vdots \\
C_{k,0}\sigma_{k+1}^{n-k-1} & C_{k,1}\sigma_{k+1}^{n-k-1} & \ldots & C_{k,(n-k-1)}\sigma_{k+1}^{n-k-1} \\
\end{array} \right).$$

We want to express $det(C_{k})$ in term of $det(C_{k+1})$ and then
use induction. For $j=l\frac{n!}{(n-k)!}+1,\cdots
,(l+1)\frac{n!}{(n-k)!}$ and $x_{j}=\alpha^{(j_{1},\cdots
,j_{n-1})}$, we have $j_{k+2}=\cdots =j_{n-1}=0$ and
$$x_{j}=\alpha_{1}^{j_{1}}\cdots \alpha_{k}^{j_{k}}\alpha_{k+1}^{l}.$$
Therefore, for $p=0,1,\cdots ,n-k-1$,
$$\sigma_{k+1}^{p}(x_{j})=\alpha_{1}^{j_{1}}\cdots \alpha_{k}^{j_{k}}\alpha_{k+p+1}^{l}.$$

If $l\neq 0$,
\begin{align*}
C_{k,l}\sigma_{k+1}^{p}-C_{k,l}&=(\phi_{i}(\sigma_{k+1}^{p}(x_{j})-x_{j}))_{i=1,\cdots ,\frac{n!}{(n-k)!},j=l\frac{n!}{(n-k)!}+1,\cdots ,(l+1)\frac{n!}{(n-k)!}}\\
&=(\phi_{i}(\alpha_{1}^{j_{1}}\cdots \alpha_{k}^{j_{k}})\phi_{i}(\alpha_{k+p+1}^{l}-\alpha_{k+1}^{l}))_{i=1,\cdots ,\frac{n!}{(n-k)!},
j=l\frac{n!}{(n-k)!}+1,\cdots ,(l+1)\frac{n!}{(n-k)!}}\\
&=(\phi_{i}(\alpha_{1}^{j_{1}}\cdots\alpha_{k}^{j_{k}})\phi_{i}(\sum\limits_{q=0}^{l-1}\alpha_{k+1}^{l-1-q}\alpha_{k+p+1}^{q})
\phi_{i}(\alpha_{k+p+1}-\alpha_{k+1}))_{i=1,\cdots,\frac{n!}{(n-k)!},j=l\frac{n!}{(n-k)!}+1,\cdots,(l+1)\frac{n!}{(n-k)!}}.
\end{align*}
If $l=0$, $C_{k,0}\sigma_{k+1}^{p}-C_{k,0}=0$.

Multiplying the first row of $C_{k+1}$ by $(-1)$ and added to each
other row, we have
$$\begin{array}{ll}det(C_{k+1})&=det\left( \begin{array}{cccc}
C_{k,0} & C_{k,1} & \ldots & C_{k,(n-k-1)} \\
0 & C_{k,1}\sigma_{k+1}-C_{k,1} & \ldots & C_{k,(n-k-1)}\sigma_{k+1}-C_{k,(n-k-1)} \\
\vdots & \vdots & \ddots & \vdots \\
0 & C_{k,1}\sigma_{k+1}^{n-k-1}-C_{k,1} & \ldots & C_{k,(n-k-1)}\sigma_{k+1}^{n-k-1}-C_{k,(n-k-1)} \\
\end{array} \right) \\
\end{array}
$$
Note that $\phi_i(\alpha_{k+p+1}-\alpha_{k+1})$ is a common factor
of entries of $(p+1)$-row. Take all of them out and let the
remaining matrix be $D$, we get
$$det(C_{k+1})=
det(C_{k})det(D)(\prod\limits_{p=1}^{n-k-1}\prod\limits_{i=1}^{\frac{n!}{(n-k)!}}\phi_{i}(\alpha_{k+p+1}-\alpha_{k+1})),$$

Let the matrix
$$E^m_{r,s}
=(\phi_i(x_j)\phi_i(\sum_{q_0+\cdots+q_m=s-1}\alpha^{q_1}_{k+1}\alpha^{q_2}_{k+2}\cdots
\alpha^{q_m}_{k+m} \alpha^{q_0}_{k+r+m}))_{i, j=1, ...,
\frac{n!}{(n-k)!}}$$ and the matrix
$$D_m=(E^m_{r, s})_{r, s=1, ..., n-k-m}$$
Note that $D_1=D$ and $E^m_{r, 1}=C_k$.

Then \begin{align*} det(D_{m}) & =det \left( \begin{array}{cccc}
E^m_{1, 1} & E^m_{1, 2} & \ldots & E^m_{1, n-k-m} \\
E^{m}_{2,1} &  E^{m}_{2,2} & \ldots &  E^{m}_{2,n-k-m}\\
\vdots & \vdots & \ddots & \vdots \\
E^{m}_{n-k-m,1} & E^{m}_{n-k-m,2}& \ldots &  E^{m}_{n-k-m,n-k-m}\\
\end{array} \right)
\\ & = det \left( \begin{array}{cccc}
C_k & E^m_{1, 2} & \ldots & E^m_{1, n-k-m} \\
0 &  E^{m}_{2,2}-E^m_{1, 2} & \ldots &  E^{m}_{2,n-k-m}-E^m_{1, n-k-m}\\
\vdots & \vdots & \ddots & \vdots \\
0 & E^{m}_{n-k-m,2}-E^m_{1, 2}& \ldots &  E^{m}_{n-k-m,n-k-m}-E^m_{1, n-k-m}\\
\end{array} \right)
\end{align*}

Note that
$$E^m_{r, s}-E^m_{1, s}=(\phi_i(x)\phi_i(\sum_{q_1+\cdots
+q_m=s-1}\alpha^{q_1}_{k+1}\alpha^{q_2}_{k+2}\cdots
\alpha^{q_m}_{k+m}(\alpha^{q_0}_{k+r+m}-\alpha^{q_0}_{k+1+m}))$$
$$=(\phi_i(x)\phi_i(\sum_{q_0+\cdots
+q_m=s-1}\alpha^{q_1}_{k+1}\alpha^{q_2}_{k+2}\cdots
\alpha^{q_m}_{k+m}(\sum_{a+b=q_0-1}\\ \alpha^a_{k+m+1}\alpha^b_{k+r+m})
\phi_i(\alpha_{k+r+m}-\alpha_{k+1+m})))$$
$$=(\phi_i(x)\phi_i(\sum_{q_0+\cdots
+q_{m+1}=s-2}\alpha^{q_1}_{k+1}\alpha^{q_2}_{k+2}\cdots
\alpha^{q_m}_{k+m}\alpha^{q_{m+1}}_{k+m+1}\alpha^{q_0}_{k+(r-1)+(m+1)})
\phi_i(\alpha_{k+r+m}-\alpha_{k+1+m}))$$
$$=(E^{m+1}_{r-1, s-1}\phi_i(\alpha_{k+r+m}-\alpha_{k+1+m}))$$

So we have
$$det(D_{m})=det(C_{k})det(D_{m+1})(\prod_{r=2}^{n-k-m}\prod_{i=1}^{\frac{n!}{(n-k)!}}\phi_{i}(\alpha_{k+r+m}-\alpha_{k+m+1})).$$
Since $det(D_{n-k-1})=det(C_{k})$, inductively we get $det(D_1)\neq
0$. Therefore $det(C_{k+1})\neq 0$. Because of
$det(C_{1})=\prod_{1\leqslant i < j \leqslant
n}(\alpha_{j}-\alpha_{i})\neq 0$, by induction, $det(C_k)\neq 0$ for
$k\in \N$. In particular,
$$det(V_{n})=det(C_{n-1}) \neq 0.$$
\end{proof}

\begin{definition}
Suppose that $f$ is a Weierstrass polynomial in $X$ and $p:Y
\rightarrow X$ is a covering where $f$ splits with roots $\alpha_1,
..., \alpha_n$. Define $V:Y \rightarrow M_{n}(\C)$ by
$$V(y):=(\phi_i(x_j(y)))_{i,j=1, ..., n!}$$
and $\Delta: Y \rightarrow \C$ by
$$\Delta(y):=det(V(y))$$
where $\phi_i, x_j$ are defined in Definition ~\ref{Notation}.
\end{definition}

\begin{lemma}\label{LemGC}
Let $Y \stackrel{p}{\rightarrow} X$ be a Galois covering. Suppose
$\lambda : Y \rightarrow \C$ is a continuous map satisfying
$$\lambda \circ \Phi =\lambda , \; \forall \Phi \in A(Y/X).$$ Then
$\lambda \in p^{*}\mathcal{C}(X)$.
\end{lemma}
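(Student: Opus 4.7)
The plan is to exhibit a continuous function $\mu: X \to \C$ with $\lambda = \mu\circ p$, which is exactly the assertion $\lambda \in p^{*}\mathcal{C}(X)$. The two things to check are that $\mu$ is well defined as a set function and that it is continuous; both will follow from the two hypotheses (Galois $+$ invariance) combined with elementary properties of covering maps.

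First I would use the invariance hypothesis together with the fact that $Y\stackrel{p}{\rightarrow}X$ is Galois to show that $\lambda$ is constant on every fiber of $p$. Indeed, if $y_1,y_2\in p^{-1}(x)$ for some $x\in X$, then by definition of Galois covering there is some $\Phi\in A(Y/X)$ with $\Phi(y_1)=y_2$; then $\lambda(y_2)=\lambda(\Phi(y_1))=(\lambda\circ\Phi)(y_1)=\lambda(y_1)$. Consequently the set-theoretic recipe
\[
\mu(x) := \lambda(y)\quad\text{for any }y\in p^{-1}(x)
\]
gives a well-defined function $\mu:X\to\C$ satisfying $\mu\circ p=\lambda$ (surjectivity of $p$ ensures $\mu$ is defined on all of $X$).

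Next I would verify continuity of $\mu$ locally, using that $p$ is a covering map. Fix $x\in X$ and pick a path-connected evenly-covered neighborhood $U$ of $x$, so $p^{-1}(U)=\coprod_{j}V_j$ with each $p|_{V_j}:V_j\to U$ a homeomorphism. Choose any index $j$; then on $U$ one has $\mu|_U = \lambda|_{V_j}\circ(p|_{V_j})^{-1}$, which is a composition of continuous maps, hence continuous. (Equivalently, $p$ is an open map, so for any open $W\subset\C$ the preimage $\mu^{-1}(W)=p(\lambda^{-1}(W))$ is open.) Thus $\mu\in\mathcal{C}(X)$ and $\lambda = p^{*}\mu \in p^{*}\mathcal{C}(X)$.

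There is no real obstacle here: the transitivity of $A(Y/X)$ on fibers (the definition of Galois) immediately collapses $\lambda$ to a fiberwise constant function, and the local triviality of $p$ promotes the resulting set-function on $X$ to a continuous one. The only subtlety worth flagging is that one needs $A(Y/X)$ to act transitively on \emph{every} fiber, not just one; but this is part of the definition of Galois covering as recorded earlier in the paper, so no extra work is required.
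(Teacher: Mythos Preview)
Your proof is correct and follows essentially the same approach as the paper: both arguments use the transitivity of $A(Y/X)$ on fibers to show $\lambda$ is constant on each fiber, then descend to a continuous map on $X$. The only cosmetic difference is that the paper invokes the fact that $p$ is a quotient map to get continuity of $\mu$ in one line, whereas you spell it out via local sections (or openness of $p$); these are equivalent justifications of the same step.
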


\begin{proof}
Since $p:Y \rightarrow X$ is a quotient map, it suffices to show
that for each $x_{0} \in X$, $x,y \in p^{-1}(x_{0})$,
$\lambda(x)=\lambda(y)$. The group $A(Y/X)$ acts on $p^{-1}(x_{0})$
transitively, so there is $\Phi \in A(Y/X)$ such that $\Phi(x)=y$.
Hence $$\lambda(y)=\lambda(\Phi(x))=\lambda(x).$$
\end{proof}

\begin{lemma}\label{RG}\label{onto}
Let $f$ be a Weierstrass polynomial in $X$ and split in
$Y\stackrel{q}{\rightarrow} X$ with roots $\alpha_{1},\cdots
,\alpha_{n}$. Let $R=q^{*}\mathcal{C}(X)$, and let $T$ be a subring
of $R$ containing coefficients of $f$ such that
\begin{enumerate}
\item $T[\alpha_{1},\cdots ,\alpha_{n}] \cap R = T$
\item $\dfrac{1}{\Delta} \in T[\alpha_{1},\cdots ,\alpha_{n}]$.
\end{enumerate} Then
\begin{enumerate}
\item
$T[\alpha_{1},\cdots ,\alpha_{n}]$ is $G$-Galois over $T$ where
$G=Aut_{T}T[\alpha_{1},\cdots ,\alpha_{n}]$.
\item
Then the group homomorphism $\omega_{Y,T}:A(Y/X)\rightarrow
G=Aut_{T}T[\alpha_{1},\cdots ,\alpha_{n}]$ is surjective.
\end{enumerate}
\end{lemma}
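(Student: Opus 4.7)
My plan is to prove part (1) --- that $T[\alpha_{1},\ldots,\alpha_{n}]$ is $G$-Galois over $T$ --- by verifying the two defining axioms directly, and then to derive part (2) as a formal consequence of (1) via the Chase--Harrison--Rosenberg lattice-inverting correspondence.

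For the fixed-ring axiom $T=T[\alpha_{1},\ldots,\alpha_{n}]^{G}$, the inclusion $T\subseteq T[\alpha_{1},\ldots,\alpha_{n}]^{G}$ is immediate. For the reverse, any $\gamma\in T[\alpha_{1},\ldots,\alpha_{n}]$ fixed by $G$ is in particular fixed by every $\omega_{Y,T}(\Phi)$, $\Phi\in A(Y/X)$ (Proposition~\ref{homo}), i.e., $\gamma\circ\Phi^{-1}=\gamma$; Lemma~\ref{LemGC} then places $\gamma$ in $R$, and hypothesis (1) narrows this to $T$. For the Galois-element axiom, I would use the monomials $x_{1},\ldots,x_{n!}$ of Definition~\ref{Notation} and set $\vec{y}:=V_{n}^{-1}\vec{e}_{1}$, the first column of $V_{n}^{-1}$; this inverse exists in $T[\alpha_{1},\ldots,\alpha_{n}]$ because $\det V_{n}=\Delta$ is invertible by hypothesis (2) together with Lemma~\ref{LemC}. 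The identity $V_{n}\vec{y}=\vec{e}_{1}$ unfolds to $\sum_{j}\phi_{i}(x_{j})\,y_{j}=\delta_{i,1}$ as a formal equality in $T[\alpha_{1},\ldots,\alpha_{n}]$, where $\phi_{i}(x_{j})$ is interpreted merely as the permuted monomial and does not require $\phi_{i}$ to extend to a ring automorphism. For $\sigma\in G$ we have $\sigma=\phi_{i_{0}}$ for some $i_{0}$, and $\sigma^{-1}\in G$; applying the genuine automorphism $\sigma^{-1}$ to this identity yields $\sum_{j}x_{j}\,\sigma^{-1}(y_{j})=\delta_{\sigma,e}$, and relabeling $\sigma^{-1}\mapsto\sigma$ gives the required $\sum_{j}x_{j}\,\sigma(y_{j})=\delta_{e,\sigma}$.

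For part (2), I would argue by contradiction. Let $H:=\omega_{Y,T}(A(Y/X))$ and suppose $H\lneq G$. By part (1) and the CHR correspondence (Theorem 2.3 of \cite{CHR}), $T[\alpha_{1},\ldots,\alpha_{n}]^{H}$ is a separable $T$-subalgebra strictly containing $T[\alpha_{1},\ldots,\alpha_{n}]^{G}=T$. But any element of $T[\alpha_{1},\ldots,\alpha_{n}]^{H}$ is fixed by $\omega_{Y,T}(\Phi)$ for every $\Phi\in A(Y/X)$, so by Lemma~\ref{LemGC} lies in $R$, and by hypothesis (1) lies in $T$, contradicting the strict inclusion. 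Hence $H=G$ and $\omega_{Y,T}$ is surjective.

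The main obstacle is the Galois-element construction in part (1): the Vandermonde-type matrix $V_{n}$ is indexed by the whole of $S_{n}$, while the Galois identity is only needed for $\sigma\in G\subseteq S_{n}$. The resolution is to read $V_{n}\vec{y}=\vec{e}_{1}$ as a purely formal identity in $T[\alpha_{1},\ldots,\alpha_{n}]$ (with $\phi_{i}$ acting only on monomials by reindexing) and to invoke the ring-automorphism structure of $G$ only at the final step, where it is genuinely available. A secondary technicality is that Lemma~\ref{LemGC} implicitly requires $Y\to X$ to be a Galois covering; this is built into the standing setup of the coverings considered in this paper.
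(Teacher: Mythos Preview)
Your approach is essentially the paper's: both verify the fixed-ring axiom via the chain $T\subseteq T[\alpha_1,\ldots,\alpha_n]^G\subseteq T[\alpha_1,\ldots,\alpha_n]^{\omega_{Y,T}(A(Y/X))}\subseteq R\cap T[\alpha_1,\ldots,\alpha_n]=T$ using Lemma~\ref{LemGC}, construct the Galois elements by inverting $V_n$, and deduce surjectivity in part (2) from the CHR correspondence applied to the equality $T[\alpha_1,\ldots,\alpha_n]^G=T[\alpha_1,\ldots,\alpha_n]^{\omega_{Y,T}(A(Y/X))}$.

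One correction is needed. Your final paragraph asserts that $Y\to X$ being Galois ``is built into the standing setup of the coverings considered in this paper,'' but the lemma's hypothesis only says $f$ splits in $Y$, with no Galois assumption on $Y$. Lemma~\ref{LemGC} genuinely requires the covering to be Galois, so your fixed-ring argument (and hence also part~(2)) is not yet justified for arbitrary $Y$. The paper closes this gap by invoking Proposition~\ref{indep} to transport the problem to the splitting covering $E_f$, which is Galois by Theorem~\ref{split}(3); you should do the same. Apart from this, your extra manoeuvre of applying $\sigma^{-1}$ and relabelling to pass from $\sum_j\sigma(x_j)y_j=\delta_{e,\sigma}$ to $\sum_j x_j\sigma(y_j)=\delta_{e,\sigma}$ is correct but unnecessary: the first identity already satisfies the definition with the roles of the $x$'s and $y$'s interchanged, which is how the paper leaves it.
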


\begin{proof}
\begin{enumerate}
\item
We have
$$T \subset T[\alpha_{1},\cdots ,\alpha_{n}]^{G} \subset
T[\alpha_{1},\cdots ,\alpha_{n}]^{\omega_{Y, T}(A(Y/X))}.$$ By Proposition~\ref{indep}, we may assume $Y=E_{f}$ which is Galois, and hence Lemma~\ref{LemGC} and the assumptions demonstrate that $$T \subset T[\alpha_{1},\cdots
,\alpha_{n}]^{G}\subset T[\alpha_{1},\cdots ,\alpha_{n}] \cap R
=T.$$

Since $\dfrac{1}{\Delta} \in T[\alpha_{1},\cdots ,\alpha_{n}]$, we
may define
$$\left( \begin{array}{c}
y_{1}(t) \\
y_{2}(t) \\
\vdots \\
y_{n!}(t) \\
\end{array} \right) := (V(t))^{-1}\left( \begin{array}{c}
1 \\
0 \\
\vdots \\
0 \\
\end{array} \right) \mbox{ for } t\in Y$$
then
$$ V(t) \left( \begin{array}{c}
y_{1}(t) \\
y_{2}(t) \\
\vdots \\
y_{n!}(t) \\
\end{array} \right) = \left( \begin{array}{c}
1 \\
0 \\
\vdots \\
0 \\
\end{array} \right)$$ In other words,
$$\sum_{i=1}^{n!}\sigma(x_{i})y_{i}=\delta_{e,\sigma}, \; \sigma \in S_{n}$$ by Lemma~\ref{LemS}.
Since $T$ contains all coefficients of $f$, $G$ merely permutes $\alpha_1,\cdots,\alpha_n$. Therefore,
$$\sum_{i=1}^{n!}\sigma(x_{i})y_{i}=\delta_{e,\sigma}, \; \sigma \in G$$ which implies $T[\alpha_{1},\cdots ,\alpha_{n}]$
is $G$-Galois over $T$.

\item
Since $Y$ is a Galois covering over $X$, from Lemma~\ref{LemGC},
$$T \subset T[\alpha_{1},\cdots ,\alpha_{n}]^{G} \subset
T[\alpha_{1},\cdots ,\alpha_{n}]^{\omega_{Y,T}(A(Y/X))} \subset
R \cap T[\alpha_{1},\cdots ,\alpha_{n}]=T$$ which implies $$
T[\alpha_{1},\cdots ,\alpha_{n}]^{G} = T[\alpha_{1},\cdots
,\alpha_{n}]^{\omega_{Y,T}(A(Y/X))}.$$ Therefore, by
part one and Chase-Harrison-Rosenberg Theorem,
$$\omega_{Y,T}(A(Y/X))=G$$
\end{enumerate}
\end{proof}

\begin{theorem}\label{delta}
Let $f$ be a Weierstrass polynomial in $X$ and split in
$Y\stackrel{q}{\rightarrow} X$ with roots $\alpha_{1},\cdots
,\alpha_{n}$. Let $R=q^{*}\mathcal{C}(X)$. Then $R[\alpha_{1},\cdots
,\alpha_{n}]$ contains $\Delta^{-1}$. In particular, $R[\alpha_1,
\cdots , \alpha_n]$ is $G_f$-Galois over $R$ and the group homomorphisms
$$\omega_{E_{f}}:A(E_{f}/X)\rightarrow G_{f}$$ is surjective.
\end{theorem}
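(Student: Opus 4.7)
The plan is to establish the key containment $\Delta^{-1}\in R[\alpha_1,\ldots,\alpha_n]$ and then invoke Lemma~\ref{RG} with $T=R$ to deduce the Galois structure and the surjectivity of $\omega_{E_f}$. The remaining hypotheses of Lemma~\ref{RG} are immediate: $R$ contains the coefficients $q^{*}a_i$ of $f$, and $R[\alpha_1,\ldots,\alpha_n]\cap R=R$ is trivial. Taking $Y=E_f$, which is Galois by Theorem~\ref{split}, the Chase--Harrison--Rosenberg machinery packaged in Lemma~\ref{RG} then yields both assertions at once.

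For the containment, my first step is to read off from the proof of Lemma~\ref{LemC} that $\Delta=\det V_n$ is, up to sign, a product of linear differences $\alpha_c-\alpha_d$. Starting from the trivial base case $\det C_0=1$, every transition formula $\det(C_{k+1})=\det(C_k)\det(D_1)\prod \phi_i(\alpha_{k+p+1}-\alpha_{k+1})$ and $\det(D_m)=\det(C_k)\det(D_{m+1})\prod \phi_i(\alpha_{k+r+m}-\alpha_{k+m+1})$ appearing there — together with the closing relation $\det(D_{n-k-1})=\det(C_k)$ — multiplies by products of factors $\phi_i(\alpha_a-\alpha_b)=\alpha_{\phi_i(a)}-\alpha_{\phi_i(b)}$. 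Unwinding the recursion yields $\Delta=\pm\prod_{k}(\alpha_{c_k}-\alpha_{d_k})$ for certain pairs $c_k\neq d_k$ with positive multiplicities.

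The second step is the Vandermonde observation. Set $V:=\prod_{i<j}(\alpha_j-\alpha_i)$. Then $V^2$ is symmetric in the $\alpha_i$, hence a polynomial expression in the elementary symmetric polynomials, i.e.\ in the coefficients of $f$, so $V^2\in R$. Because $f$ is Weierstrass, the fibre roots are distinct at every point of $Y$, so $V^2$ is nowhere vanishing on $Y$; writing $V^2=h\circ q$ with $h\in\mathcal{C}(X)$ makes $h$ a unit of $\mathcal{C}(X)$, whence $V^{-2}\in R$ and consequently $V^{-1}=V\cdot V^{-2}\in R[\alpha_1,\ldots,\alpha_n]$. For each pair $c\neq d$, the polynomial $W_{cd}:=V/(\alpha_c-\alpha_d)$ lies in $\Z[\alpha_1,\ldots,\alpha_n]\subset R[\alpha_1,\ldots,\alpha_n]$ because $(\alpha_c-\alpha_d)$ divides $V$ up to sign, so
$$(\alpha_c-\alpha_d)^{-1}=\pm W_{cd}\cdot V^{-1}\in R[\alpha_1,\ldots,\alpha_n].$$
Multiplying these inverses together across the factorization of $\Delta$ from the first step delivers $\Delta^{-1}\in R[\alpha_1,\ldots,\alpha_n]$.

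The main obstacle is the bookkeeping in the first step: one must verify carefully that every factor introduced at each stage of the inductive determinant formulas in the proof of Lemma~\ref{LemC} is of the form $\alpha_a-\alpha_b$, with no extraneous terms slipping in. Once this factorization of $\Delta$ is secured, the remaining arguments — the Vandermonde/discriminant manipulation and the application of Lemma~\ref{RG} — are routine.
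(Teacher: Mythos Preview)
Your proposal is correct, but it takes a noticeably longer route than the paper.  The paper avoids your first step entirely: rather than factoring $\Delta$ as a product of linear differences by unwinding the recursion of Lemma~\ref{LemC}, it observes that any $\sigma\in G_f$ permutes $\alpha_1,\ldots,\alpha_n$, hence permutes the rows of $V_n$ (since the $\phi_i$ list all of $S_n$), so $\sigma(\Delta)=\pm\Delta$.  Thus $\Delta^2$ is $G_f$-invariant and lies in $R[\alpha_1,\ldots,\alpha_n]^{G_f}=R$ (the fixed-point equality coming from Lemma~\ref{LemGC}).  Since $\Delta$ is nowhere vanishing by Lemma~\ref{LemC}, $\Delta^{-2}\in R$, and then $\Delta^{-1}=\Delta\cdot\Delta^{-2}\in R[\alpha_1,\ldots,\alpha_n]$.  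This is exactly your Vandermonde trick from the second step, but applied directly to $\Delta$ rather than to the auxiliary $V=\prod_{i<j}(\alpha_j-\alpha_i)$; the symmetric-function argument you use for $V^2\in R$ is just the $S_n$-version of the paper's $G_f$-invariance argument.  So your approach works, but it does the hard invertibility step twice (once for $V$, then again factor-by-factor for $\Delta$) and carries the extra burden of verifying the factorization bookkeeping, when a single application of the same idea to $\Delta$ itself suffices.
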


\begin{proof}
Since $\omega_{Y}(A(Y/X)) < G_{f}$, Lemma~\ref{LemGC} implies
$$R \subset R[\alpha_{1},\cdots ,\alpha_{n}]^{G_{f}} \subset
R[\alpha_{1},\cdots ,\alpha_{n}]^{\omega_{Y}(A(Y/X))} \subset R.$$
Hence $R[\alpha_{1},\cdots ,\alpha_{n}]^{G_{f}}=R$. By definition,
$\Delta \in R[\alpha_{1},\cdots ,\alpha_{n}]$, and for each $\sigma
\in G_{f}$ which permutes $\alpha_{1},\cdots ,\alpha_{n}$, $$\sigma(\Delta)=det((\sigma \circ
\phi_{i}(x_{j}))_{i,j=1,\cdots ,n!})=sign(\sigma)\Delta.$$
Therefore, $\Delta^{2} \in R[\alpha_{1},\cdots
,\alpha_{n}]^{G_{f}}=R$. By Lemma~\ref{LemC}, $\Delta(t) \neq 0, \;
\forall t \in Y$. Hence $\frac{1}{\Delta^{2}} \in R$, and
$\frac{1}{\Delta}=\frac{\Delta}{\Delta^{2}} \in R[\alpha_{1},\cdots
,\alpha_{n}]$. By Lemma ~\ref{RG}, $R[\alpha_1, \cdots , \alpha_n]$ is
$G_f$-Galois over $R$.
\end{proof}

\begin{corollary}(cf. Example~\ref{ExS})
Let $X=S^{1}= \lbrace z \in \C : \vert z \vert =1 \rbrace$ and $\tilde{X}=\R \stackrel{p(s)=e^{2\pi si}}{\longrightarrow} X$
is a universal covering space of $S^{1}$. Then $G_{f}$ is cyclic with a generator $\phi^{*}$, where $\phi:\R \rightarrow \R :s \mapsto s+1$.
\end{corollary}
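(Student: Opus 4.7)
My plan is to apply Theorem~\ref{delta} (via Lemma~\ref{RG}) to the universal covering $p:\R \to S^1$ directly, rather than to the splitting covering $E_f$. For $f_x(z) = z^n - x$ (the polynomial of Example~\ref{ExS}), $p^*f$ already splits on $\R$ with the explicit roots $\alpha_j(s) = e^{2\pi i(s+j-1)/n}$, so by Proposition~\ref{indep} one may identify $G_f$ with $Aut_R R[\alpha_1,\ldots,\alpha_n]$ where $R = p^*\mathcal{C}(S^1) \subset \mathcal{C}(\R)$.

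The universal covering $\R \to S^1$ is Galois, and Theorem~\ref{delta} tells us that $\Delta^{-1}$ lies in $R[\alpha_1,\ldots,\alpha_n]$ whenever $f$ splits on $Y$; in particular the hypotheses of Lemma~\ref{RG} with $T = R$ and $Y = \R$ are satisfied. That lemma supplies a surjective group homomorphism $\omega_\R: A(\R/S^1) \twoheadrightarrow G_f$. Since $A(\R/S^1)$ is the well-known infinite cyclic deck group of the universal covering, generated by the translation $\phi(s) = s + 1$, it follows that $G_f$ is cyclic as a quotient of $\Z$.

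To pin down the specific generator promised in the statement, I use the formula $\omega_\R(\Phi)(\beta)(s) = \beta(\Phi^{-1}(s))$ from Proposition~\ref{homo}: taking $\Phi = \phi^{-1}$ gives $\omega_\R(\phi^{-1})(\beta)(s) = \beta(s+1)$, which is precisely the automorphism $\phi^*$ of the statement (coinciding with the $\sigma$ of Example~\ref{ExS}). Since $\phi^{-1}$ also generates the infinite cyclic group $A(\R/S^1)$, its image $\phi^*$ generates $G_f$. The only real subtlety is the inverse convention in Proposition~\ref{homo} --- harmless here since both $\phi$ and $\phi^{-1}$ generate $\Z$ --- so the corollary is essentially an application of Theorem~\ref{delta} that recovers and refines the ad hoc computation of Example~\ref{ExS}.
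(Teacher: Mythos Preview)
Your argument is essentially correct, but you have misread the scope of the corollary. The statement is meant to hold for an \emph{arbitrary} Weierstrass polynomial $f$ on $S^1$, not just for $f_x(z)=z^n-x$; this is confirmed by the example immediately following the corollary, where the authors apply it to $f=(z^2-x)(z^2-2x)$ and conclude $G_f=\langle\phi^*\rangle\cong\Z_2$. Since Example~\ref{ExS} already computed $G_{z^n-x}\cong\Z_n$ by a direct calculation, a corollary restricted to that single polynomial would add nothing.

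Fortunately your method works in full generality with almost no change: replace the explicit roots of $z^n-x$ by an appeal to Theorem~\ref{sol}, which guarantees that \emph{every} Weierstrass polynomial on $S^1$ splits on the universal cover $\R$. Then Proposition~\ref{indep}, Theorem~\ref{delta} (giving $\Delta^{-1}\in R[\alpha_1,\dots,\alpha_n]$ on any $Y$ where $f$ splits), and Lemma~\ref{RG}(2) applied to the Galois covering $Y=\R\to S^1$ yield the surjection $\omega_{\R,R}:A(\R/S^1)\twoheadrightarrow G_f$ for arbitrary $f$. The remainder of your argument---that $A(\R/S^1)\cong\Z$ is generated by $\phi$ (or $\phi^{-1}$), so that $G_f$ is cyclic with generator $\phi^*=\omega_{\R,R}(\phi^{-1})$---then goes through unchanged. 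The paper gives no proof of this corollary, and this is precisely the intended deduction from Theorem~\ref{delta}.
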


\begin{example}(cf. Example~\ref{ExS})
Let $X=S^{1}= \lbrace z \in \C : \vert z \vert =1 \rbrace$ and $f=(z^{2}-x)(z^{2}-2x)$, $x \in S^{1}$.
Then $f$ is a Weierstrass polynomial, and its solution space $E$ is homeomorphic to a disjoint union of two circles and a 4-fold covering of $S^{1}.$

$\tilde{X}=\R \stackrel{p(s)=e^{2\pi si}}{\longrightarrow} X$  is
the universal covering space of $S^{1}$. $p^{*}f=(z^{2}-e^{2\pi
si})(z^{2}-2e^{2\pi si})$, where $s \in \R$. It is easy to see that
$$\alpha_{1}(s):=e^{\pi i s}, \; \alpha_{2}(s):=-\alpha_{1}(s), \;
\beta_{1}(s):=\sqrt{2}e^{\pi i s}, \; \beta_{2}(s)=\beta_{1}(s), \;
s \in \R$$ are all roots. By the previous corollary, $$G_{f}=<\phi^{*}>\cong
\Z_{2}.$$ Here we give another direct proof of , e.g., $\alpha_{1}
\mapsto \alpha_{2}, \; \beta_{1} \mapsto \beta_{1}$ can't induce an
element in $G_{f}$. In fact, this is immediately from the
observations $$\alpha_{1}\beta_{1} \in R=p^{*}\mathcal{C}(X),$$ but
$$\alpha_{1}\beta_{1} \neq \alpha_{2}\beta_{1}.$$
\end{example}

\subsection{The fundamental theorem of Galois theory}

In this section, we will develop a theorem corresponding to the
fundamental theorem of Galois theory. More precisely, we will
construct an 1-1 correspondences between certain ring extensions and
Galois groups, Galois groups and covering transformations, covering
transformations and covering spaces.

\begin{definition}
Let $Y \stackrel{p}{\rightarrow} X$ be a covering space and $x \in
X$. The cardinality of $p^{-1}(x)$ is called the \textbf{degree} of
$Y$ over $X$, denoted by $[Y:X]$. If $H$ is a subgroup of $G$, we
denote $\vert G/H \vert$ by $[G:H]$.
\end{definition}

The following result is clear.

\begin{lemma}
If $Z \stackrel{q}{\rightarrow} Y$ and $Y \stackrel{p}{\rightarrow} X$
are two covering spaces with finite fibres, then $Z \stackrel{pq}{\rightarrow} X$
is a covering and $$[Z:X]=[Z:Y][Y:X].$$
\end{lemma}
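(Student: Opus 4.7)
The plan is to verify the standard fact that a composition of coverings with finite fibres is again a covering, and then read off the multiplicativity of the degree from the local picture. I will work locally around an arbitrary point $x\in X$ and stack the two trivialisations.

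First, since $p:Y\to X$ is a covering, pick an open neighbourhood $U_0$ of $x$ with $p^{-1}(U_0)=\coprod_{i=1}^n V_i$ and $p|_{V_i}:V_i\to U_0$ a homeomorphism, where $n=[Y:X]$ (finite by hypothesis). For each $i$, let $y_i=(p|_{V_i})^{-1}(x)$. Using that $q:Z\to Y$ is a covering, choose an open neighbourhood $W_i\subset V_i$ of $y_i$ evenly covered by $q$, so $q^{-1}(W_i)=\coprod_{j=1}^{m}W_{i,j}$ with $q|_{W_{i,j}}:W_{i,j}\to W_i$ a homeomorphism; since $Z$ is path-connected, the cardinality $m=[Z:Y]$ is independent of the chosen base point.

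Next, shrink: set $U=\bigcap_{i=1}^n p(W_i)$, which is an open neighbourhood of $x$ because there are only finitely many $V_i$'s, and define $\widetilde{W}_i=W_i\cap p^{-1}(U)$. Because $p|_{V_i}$ is a homeomorphism with $p(W_i)\supset U$, one checks $p^{-1}(U)\cap V_i=\widetilde{W}_i$, so
$$p^{-1}(U)=\coprod_{i=1}^{n}\widetilde{W}_i$$
with each $\widetilde{W}_i$ mapped homeomorphically onto $U$ by $p$. Feeding this into $q$ gives
$$(pq)^{-1}(U)=q^{-1}\!\left(\coprod_{i=1}^{n}\widetilde{W}_i\right)=\coprod_{i=1}^{n}\coprod_{j=1}^{m}\bigl(W_{i,j}\cap q^{-1}(\widetilde{W}_i)\bigr),$$
and each of the $nm$ pieces is mapped homeomorphically onto $U$ by $pq$ (as a composition of two homeomorphisms onto their images followed by $p|_{\widetilde{W}_i}$). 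This exhibits $pq$ as a covering, locally trivial over $U$ with fibre of cardinality $nm$, so $[Z:X]=nm=[Z:Y][Y:X]$.

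There is no real obstacle: the only delicate point is to make sure the trivialisations fit together, which is why one shrinks $U_0$ down to $U=\bigcap_i p(W_i)$ so that every sheet $V_i\cap p^{-1}(U)$ actually lies inside the chosen trivialising neighbourhood $W_i$ for $q$. Finiteness of the fibre of $p$ is used precisely to guarantee that this finite intersection $U$ is still open.
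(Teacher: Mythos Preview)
Your argument is correct: the shrinking step $U=\bigcap_{i=1}^n p(W_i)$ is exactly what is needed to make the two stacks of sheets compatible, and the verification that $p^{-1}(U)\cap V_i=\widetilde{W}_i$ goes through because $p|_{V_i}$ is injective. The paper itself gives no proof of this lemma at all---it simply states that the result is clear---so your write-up is a standard explicit justification of what the authors left to the reader.
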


\begin{lemma}
If $Y \stackrel{p}{\rightarrow} X$ is a Galois covering, then $G:=A(Y/X)$ has order $[Y:X]$.
\end{lemma}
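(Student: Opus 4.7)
The plan is to fix a basepoint $x_0 \in X$ and a lift $y_0 \in p^{-1}(x_0)$, and then show that the evaluation map
$$\psi : A(Y/X) \longrightarrow p^{-1}(x_0), \qquad \psi(\Phi) := \Phi(y_0),$$
is a bijection. Since the right-hand side has cardinality $[Y:X]$ by definition, this gives the desired equality.

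First I would check surjectivity of $\psi$. This is immediate from the hypothesis: by the definition of a Galois covering, $A(Y/X)$ acts transitively on the fibre $p^{-1}(x_0)$, so every point of the fibre is hit.

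Next I would verify injectivity, which is where the unique lifting property does the work. Suppose $\Phi, \Psi \in A(Y/X)$ satisfy $\Phi(y_0) = \Psi(y_0)$. Then $\Psi^{-1}\circ\Phi$ is a covering transformation (hence $p\circ(\Psi^{-1}\Phi) = p$) fixing $y_0$. Both $\Psi^{-1}\Phi$ and the identity $\mathrm{id}_Y$ are continuous maps $(Y,y_0) \to (Y,y_0)$ lifting $p: (Y,y_0)\to (X,x_0)$ along $p$ itself. Since $Y$ is connected, Theorem~\ref{ULT} forces $\Psi^{-1}\Phi = \mathrm{id}_Y$, so $\Phi = \Psi$.

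Combining the two steps yields $|A(Y/X)| = |p^{-1}(x_0)| = [Y:X]$. There is no real obstacle here; the only subtle point is recognizing that freeness of the action of $A(Y/X)$ on any fibre is automatic from the unique lifting property, so one does not need to invoke $\pi_1$ or the normal-subgroup description of Galois coverings recorded in the remark after the definition.
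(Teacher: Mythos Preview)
Your argument is correct and self-contained. It differs from the paper's proof, which instead observes that because the covering is Galois the orbit map $\pi:Y\to Y/G$ is a covering equivalent to $p:Y\to X$, so the fibres of $p$ have cardinality $|G|$. Your approach is more elementary: it uses only the definition of Galois covering (transitivity on a fibre) together with Theorem~\ref{ULT}, and avoids invoking the fact that the quotient by the deck group recovers the base. The paper's route is shorter to state but tacitly assumes this identification of $Y/G$ with $X$, which is itself proved via exactly the free-and-transitive action on fibres that you spelled out directly.
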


\begin{proof}
Since $Y$ is Galois over $X$, the quotient space $\pi : Y \rightarrow Y/G$ is a covering
equivalent to $Y \stackrel{p}{\rightarrow} X$. Hence the number of each fibre is $\vert G \vert$.
\end{proof}

\begin{lemma}
Let $(Z,z_{0}) \stackrel{q}{\rightarrow} (Y,y_{0})$ and $(Y,y_{0}) \stackrel{p}{\rightarrow} (X,x_{0})$ be
two covering spaces. If $Z \stackrel{pq}{\rightarrow} X$ is Galois, then $Z \stackrel{q}{\rightarrow} Y$ is Galois.
\end{lemma}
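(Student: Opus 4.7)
The plan is to use the characterization of Galois coverings via normal subgroups of the fundamental group, as quoted in the remark after the definition of Galois covering. Write $G_Z := (pq)_{\ast}\pi_1(Z,z_0)$, $H := p_{\ast}\pi_1(Y,y_0)$, and $G := \pi_1(X,x_0)$. The hypothesis that $Z \stackrel{pq}{\longrightarrow} X$ is Galois says $G_Z \lhd G$. The goal is to show $q_{\ast}\pi_1(Z,z_0) \lhd \pi_1(Y,y_0)$.

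First I would recall the standard fact that for a covering $(Y,y_0) \stackrel{p}{\to} (X,x_0)$ the induced map $p_{\ast}:\pi_1(Y,y_0)\to \pi_1(X,x_0)$ is injective. Applying $p_{\ast}$ to the desired normality statement, it suffices (and is equivalent) to show $p_{\ast}q_{\ast}\pi_1(Z,z_0) \lhd p_{\ast}\pi_1(Y,y_0)$, i.e. $G_Z \lhd H$.

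Next, observe the containment $G_Z = p_{\ast}q_{\ast}\pi_1(Z,z_0) \subseteq p_{\ast}\pi_1(Y,y_0) = H \subseteq G$. Since a subgroup that is normal in a larger group is automatically normal in any intermediate subgroup, $G_Z \lhd G$ together with $G_Z \subseteq H \subseteq G$ immediately yields $G_Z \lhd H$. Pulling back through the injection $p_{\ast}$ gives $q_{\ast}\pi_1(Z,z_0) \lhd \pi_1(Y,y_0)$, so $Z \stackrel{q}{\to} Y$ is Galois by the remark.

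There is essentially no obstacle: the argument is a one-line group-theoretic observation combined with injectivity of $p_{\ast}$. The only thing to be mildly careful about is that the standing topological hypotheses (Hausdorff, path-connected, locally path-connected, semi-locally simply connected) ensure $Y$ and $Z$ behave as needed so that the remark's equivalence between Galois coverings and normal subgroups applies to both $pq$ and $q$, but this is automatic from the conventions fixed at the beginning of Section~2.
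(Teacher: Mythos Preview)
Your proof is correct and follows essentially the same route as the paper: both use the normality criterion from the remark, observe that $(pq)_{*}\pi_1(Z,z_0)\lhd \pi_1(X,x_0)$ together with $(pq)_{*}\pi_1(Z,z_0)\subseteq p_{*}\pi_1(Y,y_0)$ forces $(pq)_{*}\pi_1(Z,z_0)\lhd p_{*}\pi_1(Y,y_0)$, and then deduce $q_{*}\pi_1(Z,z_0)\lhd \pi_1(Y,y_0)$. Your write-up is slightly more explicit in invoking injectivity of $p_{*}$ for the last step, but the argument is the same.
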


\begin{proof}
Since $Z$ is Galois over $X$, $(pq)_{*}\pi_{1}(Z,z_{0}) \lhd \pi_{1}(X,x_{0})$. Also note that
$(pq)_{*}\pi_{1}(Z,z_{0}) < p_{*}\pi_{1}(Y,y_{0})$. Hence $(pq)_{*}\pi_{1}(Z,z_{0}) \lhd p_{*}\pi_{1}(Y,y_{0})$
which implies $$q_{*}\pi_{1}(Z,z_{0}) \lhd \pi_{1}(Y,y_{0}).$$ Therefore, $Z \stackrel{q}{\rightarrow} Y$ is Galois.
\end{proof}

\begin{theorem}\label{FT}
Let $f$ be a Weierstrass polynomial of degree $n$ in $X$, and
$\alpha_1, ..., \alpha_n$ be the $n$ roots of $f$ in the splitting
covering $q:(E_f, e_0) \rightarrow (X, x_0)$. Suppose $T$ is a
subring of $R$ containing coefficients of $f$ where
$R=q^{*}\mathcal{C}(X)$ and
\begin{enumerate}
\item $T[\alpha_{1},\cdots ,\alpha_{n}] \cap R = T$
\item $\dfrac{1}{\Delta} \in T[\alpha_{1},\cdots ,\alpha_{n}]$.
\end{enumerate}
Then
\begin{enumerate}
\item $\omega=\omega_{E_{f},T}:A(E_{f}/X)\rightarrow G=Aut_{T}T[\alpha_{1},\cdots ,\alpha_{n}]$ is an isomorphism.
\item We have the following one-to-one correspondences among (based) covering spaces between $(E_{f},e_{0})\stackrel{q}{\rightarrow}
(X,x_{0})$, subgroups of $A(E_{f}/X)$, subgroups of $G_{f}$, and separable subrings of $T[\alpha_{1},\cdots ,\alpha_{n}]$ over $T$
$$\begin{array}{ccccccc}
(E_{f},e_{0}) & \longleftrightarrow & <e> & \longleftrightarrow & <e'> & \longleftrightarrow & T[\alpha_{1},\cdots ,\alpha_{n}]  \\
\downarrow & & \wedge & & \wedge & & \cup \\
(L,l_{0}) & \longleftrightarrow & H & \longleftrightarrow & H' & \longleftrightarrow & L' \\
\downarrow & & \wedge & & \wedge & & \cup \\
(M,m_{0}) & \longleftrightarrow & J & \longleftrightarrow & J' & \longleftrightarrow & M' \\
\downarrow & & \wedge & & \wedge & & \cup \\
(X,x_{0}) & \longleftrightarrow & A(E_{f}/X) & \longleftrightarrow & G & \longleftrightarrow & T
\end{array}$$
which are given by the theory of covering spaces, $\omega$, and
Chase-Harrison-Rosenberg theorem, that is, $H=A(E_{f}/L)$,
$H'=\omega (H)$, $L'=T[\alpha_{1},\cdots ,\alpha_{n}]^{H'}$, and
$H'=G_{L'}=\lbrace \phi \in G \mid \phi\vert_{L'}=id_{L'}
\rbrace$. Moreover, $$[L:M]=[J:H]=[J':H'].$$
\end{enumerate}
In particular, $$A(E_{f}/X) \cong G_{f}. $$
\end{theorem}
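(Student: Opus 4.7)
The plan is to combine three correspondences: the covering-space Galois correspondence for the Galois cover $E_f\to X$ (Theorem~\ref{split}), the group isomorphism $\omega$ to be established in part~(1), and the Chase-Harrison-Rosenberg correspondence applied to the $G$-Galois ring extension $T[\alpha_1,\dots,\alpha_n]/T$. The final assertion $A(E_f/X)\cong G_f$ is then part~(1) applied with $T=R$, whose hypotheses hold trivially ($R[\alpha_1,\dots,\alpha_n]\cap R=R$) together with Theorem~\ref{delta} ($1/\Delta\in R[\alpha_1,\dots,\alpha_n]$).

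For part~(1), surjectivity of $\omega=\omega_{E_f,T}$ is given directly by Lemma~\ref{onto} applied with $Y=E_f$, since $E_f\to X$ is Galois and the assumptions on $T$ are precisely those of that lemma. For injectivity, I would argue as follows. Suppose $\omega(\Phi)=\mathrm{id}_G$. Then $\alpha_j\circ\Phi^{-1}=\alpha_j$ for each $j$, so $\alpha_j(\Phi(y))=\alpha_j(y)$ for all $y\in E_f$. Combined with $q\circ\Phi=q$ and the explicit inclusion $E_f\subset X\times\C^n$ identifying $y$ with $(q(y),\alpha_1(y),\dots,\alpha_n(y))$, this forces $\Phi(y)=y$ for every $y$, hence $\Phi=\mathrm{id}_{E_f}$. (Equivalently: $\Phi$ fixes every point of every fibre, and a nontrivial deck transformation of a connected covering has no fixed points.)

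For part~(2), I would chain three bijections. First, based intermediate coverings $(E_f,e_0)\to(L,l_0)\to(X,x_0)$ correspond to subgroups $H\le A(E_f/X)$ via $H=A(E_f/L)$, by the standard covering-space Galois correspondence applied to the Galois covering $E_f\to X$. Second, subgroups of $A(E_f/X)$ correspond to subgroups of $G$ via $H\mapsto H'=\omega(H)$, by part~(1). Third, Theorem~\ref{delta} and Lemma~\ref{onto} ensure that $T[\alpha_1,\dots,\alpha_n]$ is $G$-Galois over $T$, so Chase-Harrison-Rosenberg supplies a lattice-inverting bijection between subgroups $H'\le G$ and separable $T$-subalgebras $L'$ of $T[\alpha_1,\dots,\alpha_n]$, with $L'=T[\alpha_1,\dots,\alpha_n]^{H'}$ and $H'=\{\phi\in G:\phi|_{L'}=\mathrm{id}\}$. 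For the index equality, any intermediate $E_f\to L\to X$ is Galois over $L$ by the lemma that connected sub-coverings of a Galois covering are themselves Galois, hence $|A(E_f/L)|=[E_f:L]$ by the order-equals-degree lemma. For a tower $L\to M\to X$ with $H=A(E_f/L)\subset J=A(E_f/M)$, multiplicativity of covering degrees then gives $[L:M]=[E_f:M]/[E_f:L]=|J|/|H|=[J:H]$, and the group isomorphism $\omega$ immediately yields $[J:H]=[J':H']$.

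The bulk of the technical content has already been absorbed into Lemma~\ref{onto} and Theorem~\ref{delta}, which supply the crucial $G$-Galois property of $T[\alpha_1,\dots,\alpha_n]/T$ (without which Chase-Harrison-Rosenberg could not be invoked). Given those, the one genuinely new ingredient here is injectivity of $\omega$, which is the main conceptual step and falls out cleanly from the product embedding of $E_f$; everything else is bookkeeping to verify that the three bijections compose to the four-column diagram in the statement.
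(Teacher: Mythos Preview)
Your proposal is correct and follows essentially the same route as the paper: injectivity of $\omega$ via the explicit embedding $E_f\subset X\times\C^n$ (so that $q$ and the $\alpha_j$ jointly determine points), surjectivity via Lemma~\ref{onto}, and part~(2) by chaining the covering-space correspondence, the isomorphism $\omega$, and Chase--Harrison--Rosenberg, with the index equality coming from $[E_f:L]=|A(E_f/L)|$ for the intermediate Galois covers. One small remark: in your third bijection you need only Lemma~\ref{onto} (whose hypotheses are precisely conditions~(1) and~(2) on $T$) to conclude that $T[\alpha_1,\dots,\alpha_n]$ is $G$-Galois over $T$; Theorem~\ref{delta} is the special case $T=R$ and is only needed, as you correctly note, for the final ``in particular'' clause.
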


\begin{proof}
Let $E_{f}\stackrel{q}{\rightarrow} X$ be the splitting covering
constructed in the section of splitting coverings and $\alpha_{i}$
be the projection of the $(i+1)^{th}$ component, $i=1,\cdots ,n$.
Suppose $\Phi \in ker(\omega)$ and write $\Phi : E_{f}\rightarrow
E_{f}$ as $\Phi(x,z_{1},\cdots ,z_{n})=(\Phi_{1}(x,z_{1},\cdots
,z_{n}),\cdots ,\Phi_{n+1}(x,z_{1},\cdots ,z_{n}))$. Since
$(\Phi^{-1})^{*}(\alpha_{i})=\alpha_{i}$, $\alpha_{i} \circ \Phi =
\alpha_{i}$, $i=1,\cdots ,n$. Therefore
$$\Phi_{i+1}(x,z_{1},\cdots ,z_{n})=\alpha_{i}(\Phi(x,z_{1},\cdots ,z_{n}))=\alpha_{i}(x,z_{1},\cdots ,z_{n})=z_{i},$$
and
$$\Phi_{1}(x,z_{1},\cdots ,z_{n})=q(\Phi(x,z_{1},\cdots ,z_{n}))=q(x,z_{1},\cdots ,z_{n})=x.$$ Hence $\Phi=id_{E_{f}}$.
In other words, $\omega$ is injective. Furthermore, from Theorem~\ref{onto}, $\omega$ is surjective; therefore, $\omega$
is an isomorphism.

For the second part, the correspondences follow part one. Since $E_{f}$ is Galois over $X$, it is also Galois over $L$.
Hence $[E_{f}:L]= \vert A(E_{f}/L)= H \vert $. Similarly, $[E_{f}:M]= \vert J \vert$ Therefore,
$$[L:M]=[E_{f}:M]/[E_{f}:L]=\vert J \vert / \vert H \vert = [J:H].$$
\end{proof}

\begin{corollary}\label{conti}
Let $f$ be a Weierstrass polynomial of degree $n$ on $X$ with roots
$\alpha_{1},\cdots ,\alpha_{n}:E_{f}\rightarrow \C$ where $E_{f}
\stackrel{q}{\rightarrow}X$ is the splitting covering. Then
$$\mathcal{C}(E_{f})=
q^{*}\mathcal{C}(X)[\alpha_{1},\cdots ,\alpha_{n}].$$ In particular, $$A(E_{f}/X) \cong
Aut_{p^{*}\mathcal{C}(X)}\mathcal{C}(E_{f}).$$
\end{corollary}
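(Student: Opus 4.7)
The plan is to prove the equality $\mathcal{C}(E_{f}) = q^{*}\mathcal{C}(X)[\alpha_{1},\cdots ,\alpha_{n}]$ by an explicit averaging argument, since the reverse inclusion is immediate from the continuity of the $\alpha_{i}$ and of the pullbacks $q^{*}\gamma$. The ingredients come from Theorem~\ref{delta} applied with $R = q^{*}\mathcal{C}(X)$: the ring $R[\alpha_{1},\cdots ,\alpha_{n}]$ is $G_{f}$-Galois over $R$ and contains $\Delta^{-1}$, so the construction in the proof of Lemma~\ref{RG} produces explicit elements $y_{1},\cdots ,y_{n!} \in R[\alpha_{1},\cdots ,\alpha_{n}]$, namely the components of $V^{-1}(1,0,\cdots ,0)^{T}$, satisfying the Galois identity
\[
\sum_{i=1}^{n!} \sigma(x_{i})\, y_{i} = \delta_{e,\sigma}\qquad\text{for every }\sigma \in G_{f},
\]
where the $x_{i}$ are the monomials of Definition~\ref{Notation}.

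The key step is a trace averaging. For $\lambda \in \mathcal{C}(E_{f})$, I define
\[
t_{i} := \sum_{\Phi \in A(E_{f}/X)} (\lambda\cdot x_{i})\circ \Phi^{-1},\qquad i=1,\cdots ,n!,
\]
which is a finite sum because $E_{f}\to X$ is a finite Galois covering (Theorem~\ref{split}(3), together with $E_{f}\subset S_{f}$ having degree at most $n!$). Each $t_{i}$ is continuous and, by construction, $A(E_{f}/X)$-invariant; Lemma~\ref{LemGC} therefore places $t_{i}\in R$. Transporting the Galois identity through the isomorphism $\omega : A(E_{f}/X)\cong G_{f}$ of Theorem~\ref{FT} (recall $\omega(\Phi)(\beta) = \beta\circ \Phi^{-1}$), it becomes $\sum_{i} x_{i}(\Phi^{-1}(y))\, y_{i}(y) = \delta_{e,\omega(\Phi)}$ pointwise on $E_{f}$, and substituting into the definition of the $t_{i}$ collapses the double sum to yield $\sum_{i} t_{i}\, y_{i} = \lambda$. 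Thus $\lambda \in R[\alpha_{1},\cdots ,\alpha_{n}]$, proving the non-trivial inclusion.

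The ``in particular'' clause is then formal: the established equality $\mathcal{C}(E_{f}) = R[\alpha_{1},\cdots ,\alpha_{n}]$ combined with the isomorphism $A(E_{f}/X)\cong G_{f} = Aut_{R} R[\alpha_{1},\cdots ,\alpha_{n}]$ from Theorem~\ref{FT} yields $A(E_{f}/X)\cong Aut_{q^{*}\mathcal{C}(X)}\mathcal{C}(E_{f})$. I expect the main subtlety to be the careful translation between the ring-theoretic action of $G_{f}$ on $R[\alpha_{1},\cdots ,\alpha_{n}]$ and the geometric action of $A(E_{f}/X)$ by pre-composition with $\Phi^{-1}$; once this dictionary is made precise via $\omega$, the pointwise check that the trace sum collapses correctly is a routine computation.
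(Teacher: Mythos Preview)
Your argument is correct, but it proceeds by a different mechanism than the paper's. The paper does not perform the trace computation explicitly; instead it observes that the same elements $x_{1},\ldots,x_{n!},y_{1},\ldots,y_{n!}$ from Lemma~\ref{RG} witness that the \emph{larger} ring $\mathcal{C}(E_{f})$ is itself $H$-Galois over $R=q^{*}\mathcal{C}(X)$, where $H=\omega(A(E_{f}/X))$, and then invokes the Chase--Harrison--Rosenberg correspondence as a black box: both $\mathcal{C}(E_{f})$ and $R[\alpha_{1},\ldots,\alpha_{n}]$ are separable $R$-subalgebras with associated subgroup $\{e\}$, hence must coincide. Your averaging argument unwinds what that correspondence proof would do in this particular case, writing each $\lambda\in\mathcal{C}(E_{f})$ concretely as $\sum_{i}t_{i}y_{i}$ with $t_{i}\in R$. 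The benefit of your route is that it is self-contained and avoids appealing to the CHR classification of separable subalgebras; the benefit of the paper's route is brevity once CHR is already in hand. Both rest on the same three ingredients: Lemma~\ref{LemGC}, the Galois system $(x_{i},y_{i})$ built from $V^{-1}$, and the injectivity of $\omega$ (you use the full isomorphism from Theorem~\ref{FT}, but only injectivity is needed to make $\delta_{e,\omega(\Phi)}$ pick out $\Phi=\mathrm{id}$).
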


\begin{proof}
    Clearly, $\mathcal{C}(E_{f})\supset q^{*}\mathcal{C}(X)[\alpha_{1},...,\alpha_{n}]$. Conversely, we consider the map $\omega :A(E_{f}/X)\rightarrow Aut_{q^{*}\mathcal{C}(X)}\mathcal{C}(E_{f})$ defined by $$\omega(\Phi)=(\Phi^{-1})^{*}.$$ By Lemma~\ref{LemGC}, $$\mathcal{C}(E_{f})^{H}=q^{*}\mathcal{C}(X)$$ where $H$ is the finite group $\omega(A(E_{f}/X))$. Moreover, from the proof of Lemma~\ref{RG}, there are $x_{1},...,x_{n!}, y_{1},...,y_{n!} \in q^{*}\mathcal{C}(X)[\alpha_{1},...,\alpha_{n}] \subset \mathcal{C}(E_{f})$ such that $$\sum_{i=1}^{n!}\sigma(x_{i})y_{i}=\delta_{e,\sigma}, \; \sigma \in H.$$
Therefore, $\mathcal{C}(E_{f})$ is $H$-Galois over $q^{*}\mathcal{C}(X)$. As a result of Chase-Harrison-Rosenberg Theorem and Lemma~\ref{RG}, $$\mathcal{C}(E_{f})= q^{*}\mathcal{C}(X)[\alpha_{1},...,\alpha_{n}]$$ since  $\mathcal{C}(E_{f})\supset q^{*}\mathcal{C}(X)[\alpha_{1},...,\alpha_{n}]$ are both separable over $q^{*}\mathcal{C}(X)$ and $$H_{q{*}\mathcal{C}(X)[\alpha_{1},...,\alpha_{n}]}=\lbrace e \rbrace =H_{\mathcal{C}(E_{f})}.$$
\end{proof}

It is natural to ask whether any finite connected Galois covering of
$X$ (that is, connected Galois covering of finite degree) is
equivalent to a splitting covering of a Weierstrass polynomial. At
this moment we have not yet solved this problem, so we make it as a
conjecture.

\begin{conjecture}\label{conj}
Suppose $Y \stackrel{p}{\rightarrow}X$ is a finite connected Galois
covering of $X$. Then there exists a Weierstrass polynomial $f$ on
$X$ such that the splitting covering
$E_{f}\stackrel{q}{\rightarrow}X$ is equivalent to $Y
\stackrel{p}{\rightarrow}X$.
\end{conjecture}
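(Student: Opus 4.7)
The plan is to realise $Y$ as the splitting covering of a Weierstrass polynomial $f$ whose roots on $Y$ form the orbit, under the Galois group $A(Y/X)$, of a single fibre-separating function. Set $n:=[Y:X]=|A(Y/X)|$ and attempt to find a continuous map $\beta: Y\to\C$ that is injective on every fibre $p^{-1}(x)$. Given such $\beta$, define
\[ f_x(z) := \prod_{y\in p^{-1}(x)}\bigl(z - \beta(y)\bigr). \]
Its coefficients are elementary symmetric functions of the $\beta$-values on each fibre; on any trivialising open $U\subset X$ with local sections $s_1,\ldots,s_n$ of $p$, they are symmetric polynomials in the continuous functions $\beta\circ s_i$, and since symmetric polynomials are label-independent these local expressions patch to continuous functions on $X$. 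Hence $f\in\mathcal{C}(X)[z]$, and the fibre-injectivity of $\beta$ ensures that the $n$ roots of each $f_x$ are distinct, so $f$ is a Weierstrass polynomial of degree $n$.

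Next, I would show that $E_f\cong Y$. Transitivity of $A(Y/X)$ on fibres together with fibre-injectivity of $\beta$ makes the $n$ functions $\{\beta\circ g : g\in A(Y/X)\}$ pairwise distinct roots of $p^{*}f$, so $f$ splits on $Y$. The universal property of the splitting covering (Theorem~\ref{split}) then yields a covering map $\pi: Y\to E_f$ over $X$. Writing $\pi^{*}\alpha_i = \beta\circ g_{\sigma(i)}$ for a bijection $\sigma:\{1,\ldots,n\}\to A(Y/X)$, any $g\in A(Y/X)$ satisfying $\pi\circ g=\pi$ must obey $\beta\circ (g_{\sigma(i)}\circ g) = \beta\circ g_{\sigma(i)}$ for every $i$, and fibre-injectivity then forces $g_{\sigma(i)}\circ g = g_{\sigma(i)}$, i.e.\ $g=e$. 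Hence $A(Y/E_f)=\{e\}$, so $\pi$ is a covering isomorphism.

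The entire difficulty, and presumably the reason the statement remains a conjecture, lies in step one: producing a fibre-injective continuous $\beta: Y\to\C$. A direct partition-of-unity gluing of local labels $\zeta_1,\ldots,\zeta_n\in\C$ typically destroys fibre-injectivity, because different trivialisations attach labels to sheets incompatibly. A more promising route is first to embed $Y$ into $X\times\C^N$ over $X$ for large $N$ via partition of unity with labels in general position, obtaining a fibre-injective $\widetilde{\beta}: Y\to\C^N$, and then to compose with a ``generic'' linear projection $\C^N\to\C$; the locus of projections that identify two points in a common fibre of $p$ should be meagre, but establishing this rigorously under the paper's mild hypotheses is delicate and may force additional regularity assumptions (paracompactness, finite covering dimension, or CW structure). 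An alternative bundle-theoretic viewpoint interprets the Galois covering as a principal $A(Y/X)$-bundle whose classifying map $X\to BA(Y/X)\to BS_n$ (via the regular representation with $n=|A(Y/X)|$) one would like to lift through $B^n\simeq K(B_n,1)\to BS_n$; this reformulation may be more amenable to an obstruction-theoretic attack, though global existence of such a lift is itself a non-trivial topological question.
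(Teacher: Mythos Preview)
This statement is labelled a \emph{conjecture} in the paper, and the authors explicitly write that they ``have not yet solved this problem''. There is therefore no proof in the paper to compare against. What the paper does offer is a partial result (Proposition~\ref{SCC}): when $\pi_1(X)$ is free, Hansen's theorem (Theorem~\ref{quote}) guarantees that every finite covering is the \emph{solution space} of some Weierstrass polynomial, and then Corollary~\ref{CorSplit} upgrades a Galois solution space to a splitting covering.

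Your reduction is correct and in fact recovers exactly this partial result by a slightly different route. The existence of a fibre-injective $\beta:Y\to\C$ is equivalent to $Y$ being the solution space of a Weierstrass polynomial (namely your $f$, via the map $y\mapsto(p(y),\beta(y))$), and your argument that $A(Y/E_f)=\{e\}$, combined with the fact that $Y\to E_f$ is Galois (since $Y\to X$ is), correctly yields $Y\cong E_f$. So your Steps~2--3 amount to a self-contained proof of Corollary~\ref{CorSplit} in this setting.

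You have also correctly located the obstruction: producing a fibre-injective continuous $\beta:Y\to\C$ under the paper's hypotheses is precisely the open point. Your suggested attacks (generic projection from a high-dimensional embedding, or lifting the classifying map through $B^n\to BS_n$) are reasonable lines, but as you note neither goes through without additional assumptions; the braid-group lifting question in particular is known to be subtle. In short, your proposal is a sound reformulation of the conjecture rather than a proof of it, and the paper is in the same position.
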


One reason to believe that this conjecture is true is due to the
following result.

\begin{theorem}\label{quote} (\cite[Theorem 6.3, pg 110]{Ha})
Suppose the $\pi_{1}(X)$ is a free group. Then every finite covering map onto $X$
is equivalent to a polynomial covering map, that is,
there is a Weierstrass polynomial on $X$ such that its solution space is equivalent to that
finite covering as a covering space.
\end{theorem}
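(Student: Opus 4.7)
The plan is to use a classifying-space argument based on the space $B^n$ already introduced in the paper. First, observe that $B^n = \C^n \setminus Z(\delta)$ is naturally homeomorphic to the unordered configuration space $\mathrm{UConf}_n(\C) = (\C^n \setminus \mathrm{diag})/S_n$, via the map sending $(a_0,\ldots,a_{n-1})$ to the unordered multiset of roots of $z^n + a_{n-1}z^{n-1} + \cdots + a_0$. A classical result asserts that $\mathrm{UConf}_n(\C)$ is a $K(B_n,1)$, where $B_n$ is Artin's braid group on $n$ strands, and that the natural surjection $B_n \twoheadrightarrow S_n$ (recording how strands permute their endpoints) is precisely the monodromy of the $n!$-fold cover $S \to B^n$ of an earlier lemma. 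Consequently, the tautological $n$-fold polynomial covering over $B^n$ (i.e., the solution space of the universal Weierstrass polynomial) has monodromy equal to the composite $B_n \twoheadrightarrow S_n$ followed by the standard action of $S_n$ on $\{1,\ldots,n\}$.

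With this in hand, I would proceed as follows. Let $Y \stackrel{p}{\rightarrow} X$ be a finite connected covering of degree $n$; it determines a transitive monodromy homomorphism $\rho:\pi_1(X) \to S_n$, well-defined up to conjugation. Since $\pi_1(X)$ is free, it is a projective object in the category of groups, so $\rho$ lifts to a homomorphism $\tilde\rho:\pi_1(X) \to B_n$ along the surjection $B_n \twoheadrightarrow S_n$ (simply choose arbitrary lifts of the images of the free generators). Because $B^n$ is aspherical, based homotopy classes of maps $X \to B^n$ are in bijection with homomorphisms $\pi_1(X) \to B_n$, so we obtain a continuous map $a=(a_0,\ldots,a_{n-1}):X \to B^n$ realizing $\tilde\rho$ on $\pi_1$. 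Set $f = f_x(z) := z^n + a_{n-1}(x)z^{n-1} + \cdots + a_0(x)$, a Weierstrass polynomial on $X$ by construction. Its solution space is the pullback of the tautological $n$-fold cover along $a$, so its monodromy is exactly the composite $\pi_1(X) \xrightarrow{\tilde\rho} B_n \twoheadrightarrow S_n = \rho$. Hence $E_f \stackrel{q}{\rightarrow} X$ is equivalent to $Y \stackrel{p}{\rightarrow} X$.

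The main obstacle is the bookkeeping that identifies $B^n$ with a $K(B_n,1)$ and pins down the monodromy of the universal polynomial cover as the canonical quotient $B_n \to S_n$; once these identifications are made precise (and the standing hypotheses on $X$ ensure the covering-theory/monodromy dictionary applies), the lifting step uses only the freeness of $\pi_1(X)$ and the bijection between $[X,B^n]_*$ and $\mathrm{Hom}(\pi_1(X), B_n)$. A minor additional subtlety is handling the conjugacy ambiguity of the monodromy by a compatible choice of basepoints, and invoking Corollary~\ref{homotopy} to convert the homotopical construction of $a$ into a genuine equivalence of covering spaces.
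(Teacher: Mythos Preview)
The paper does not supply its own proof of this result; it is quoted verbatim from Hansen's monograph, so there is no in-paper argument to compare against. Your sketch is precisely the classical proof that Hansen gives: identify $B^n$ with the Eilenberg--MacLane space $K(B_n,1)$ for Artin's braid group, lift the monodromy representation $\pi_1(X)\to S_n$ through the surjection $B_n\twoheadrightarrow S_n$ using freeness of $\pi_1(X)$, and realize the lifted homomorphism by a continuous map $a:X\to B^n$, which then serves as the coefficient vector of the desired Weierstrass polynomial.

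Two small points are worth flagging. First, the bijection $[X,B^n]_*\cong\mathrm{Hom}(\pi_1(X),B_n)$ you invoke is the standard representability of a $K(G,1)$, but it requires $X$ to have the homotopy type of a CW complex; the standing hypotheses in this paper (Hausdorff, path-connected, locally path-connected, semi-locally simply connected) do not guarantee this. Hansen works in the CW setting, so this is a discrepancy in stated generality rather than an error in your reasoning, but you should make the hypothesis explicit. Second, you tacitly restricted to connected $Y$; for a disconnected finite cover one handles each component separately and multiplies the resulting Weierstrass polynomials, after translating roots in $\C$ to keep them pointwise disjoint so that the product remains Weierstrass.
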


We have the following partial result.

\begin{proposition}\label{SCC}
Suppose $\pi_{1}(X)$ is free. Then any finite connected Galois covering of $X$ is equivalent to a
splitting covering of a Weierstrass polynomial on $X$.
\end{proposition}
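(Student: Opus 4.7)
The plan is to combine Theorem~\ref{quote} with Corollary~\ref{CorSplit} in a nearly direct fashion. Given a finite connected Galois covering $Y \stackrel{p}{\rightarrow} X$, the freeness of $\pi_1(X)$ lets us apply Theorem~\ref{quote} to produce a Weierstrass polynomial $f$ on $X$ whose solution space $E \stackrel{\pi}{\rightarrow} X$ is equivalent to $Y \stackrel{p}{\rightarrow} X$ as a covering space.

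Next I would transfer the two topological features of $Y$ across this equivalence. First, since $Y$ is connected and $E \to X$ is equivalent to $Y \to X$, the solution space $E$ is connected; by the result quoted just before Definition~2.2 (the solution space of a Weierstrass polynomial is connected iff the polynomial is irreducible), this forces $f$ to be irreducible in $\mathcal{C}(X)[z]$. Second, since being Galois is a property of the subgroup $p_*\pi_1(Y,y_0) \lhd \pi_1(X,x_0)$, equivalence of covering spaces preserves it, so $E \stackrel{\pi}{\rightarrow} X$ is also a Galois covering.

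With $f$ irreducible and $E \to X$ Galois, Corollary~\ref{CorSplit} applies and tells us that $E \stackrel{\pi}{\rightarrow} X$ is itself a splitting covering of $f$. In other words, $E \to X$ coincides with the splitting covering $E_f \stackrel{q}{\rightarrow} X$ (up to covering isomorphism, by the uniqueness part of Theorem~\ref{split}). Composing the equivalences, $Y \stackrel{p}{\rightarrow} X$ is equivalent to $E_f \stackrel{q}{\rightarrow} X$, which is exactly what we need.

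There is no real obstacle in this argument; it is essentially an assembly of existing results. The only point worth stating carefully in the write-up is the transfer of the Galois property under a covering equivalence, and the observation that irreducibility of $f$ follows from connectedness of its solution space, so that the hypotheses of Corollary~\ref{CorSplit} are both met. If one wanted to be more explicit one could note that, conversely, once the splitting covering is constructed, Theorem~\ref{FT} identifies $A(E_f/X)$ with $G_f$, so this proposition fits naturally with the semi-topological Galois theory developed in the preceding sections.
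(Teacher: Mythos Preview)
Your proposal is correct and follows essentially the same route as the paper: apply Theorem~\ref{quote} to realize $Y\to X$ as the solution space of some Weierstrass polynomial $f$, then invoke Corollary~\ref{CorSplit} to conclude that this solution space is already the splitting covering. You are in fact more careful than the paper's write-up, since you explicitly verify both hypotheses of Corollary~\ref{CorSplit} (irreducibility of $f$ via connectedness of the solution space, and the Galois property transferred along the covering equivalence), whereas the paper leaves these checks implicit.
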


\begin{proof}
Let $Y \stackrel{p}{\rightarrow}X$ be a finite connected Galois
covering of $X$. Theorem~\ref{quote} demonstrates that there is a
Weierstrass polynomial $f$ on $X$ such that its solution space is
equivalent to $Y \stackrel{p}{\rightarrow}X$. Moreover, by
Corollary~\ref{CorSplit}, the splitting covering $E_{f}
\stackrel{q}{\rightarrow}X$ is equivalent to $Y
\stackrel{p}{\rightarrow}X$.
\end{proof}

\begin{corollary}
Suppose $\pi_{1}(X)$ is free and $Y \stackrel{p}{\rightarrow}X$ is a
finite connected Galois covering of $X$. Then $$A(Y/X) \cong
Aut_{p^{*}\mathcal{C}(X)}\mathcal{C}(Y),$$
and $\mathcal{C}(Y)$ is
$A(Y/X)$-Galois over $p^{*}\mathcal{C}(X)$.
\end{corollary}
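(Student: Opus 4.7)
The plan is to reduce the statement to the splitting-covering case, where everything needed is already in place. By Proposition \ref{SCC}, the hypothesis that $\pi_1(X)$ is free together with $Y\stackrel{p}{\to}X$ being a finite connected Galois covering guarantees a covering equivalence $\varphi:Y\to E_f$ over $X$ for some Weierstrass polynomial $f$ on $X$, i.e.\ $q\circ\varphi = p$. I would start by spelling out how such a $\varphi$ transports all the relevant structure: the pullback $\varphi^{*}:\mathcal{C}(E_f)\to\mathcal{C}(Y)$ is a ring isomorphism, its restriction identifies $q^{*}\mathcal{C}(X)$ with $p^{*}\mathcal{C}(X)$ (since $q\circ\varphi=p$), and conjugation by $\varphi$ gives a group isomorphism $A(E_f/X)\to A(Y/X)$ compatible with the natural actions on continuous functions via the map $\omega$ of Proposition \ref{homo}.

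Next, I would invoke the results already established for the splitting covering. Corollary \ref{conti} identifies $\mathcal{C}(E_f) = q^{*}\mathcal{C}(X)[\alpha_1,\cdots,\alpha_n]$ and yields the isomorphism $A(E_f/X)\cong \mathrm{Aut}_{q^{*}\mathcal{C}(X)}\mathcal{C}(E_f) = G_f$. Moreover, Theorem \ref{delta} shows that $q^{*}\mathcal{C}(X)[\alpha_1,\cdots,\alpha_n]$ is $G_f$-Galois over $q^{*}\mathcal{C}(X)$; combining with Corollary \ref{conti} this means $\mathcal{C}(E_f)$ is $G_f$-Galois over $q^{*}\mathcal{C}(X)$. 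Transporting along $\varphi^{*}$ from the first paragraph then gives $A(Y/X)\cong \mathrm{Aut}_{p^{*}\mathcal{C}(X)}\mathcal{C}(Y)$ and shows that $\mathcal{C}(Y)$ is $A(Y/X)$-Galois over $p^{*}\mathcal{C}(X)$, which are exactly the two assertions.

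There is essentially no genuine obstacle here, since Proposition \ref{SCC} already carries the entire content; the only thing one must be careful about is the verification that the covering equivalence $\varphi$ intertwines the Chase--Harrison--Rosenberg Galois data. Concretely, if $\{x_i\},\{y_i\}$ are the elements in $\mathcal{C}(E_f)$ witnessing the $G_f$-Galois property (as produced in the proof of Lemma \ref{RG} via $V^{-1}$), then $\{\varphi^{*}x_i\},\{\varphi^{*}y_i\}$ witness the corresponding property for $\mathcal{C}(Y)$ under the transported action of $A(Y/X)$. Checking that $p^{*}\mathcal{C}(X) = \mathcal{C}(Y)^{A(Y/X)}$ is then immediate from Lemma \ref{LemGC} applied to the Galois covering $Y\to X$, which completes the argument.
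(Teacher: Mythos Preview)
Your proposal is correct and follows essentially the same route as the paper: reduce to the splitting-covering case via Proposition~\ref{SCC}, then transport the conclusions of Corollary~\ref{conti} (and the Galois property established in its proof/Theorem~\ref{delta}) along the covering equivalence $\varphi$. Your write-up is in fact more careful than the paper's in spelling out why the transported data witnesses the $A(Y/X)$-Galois structure and in invoking Lemma~\ref{LemGC} for the fixed-ring identification.
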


\begin{proof}
By Proposition~\ref{SCC}, there exists a Weierstrass polynomial $f$
on $X$ with splitting covering $E_{f} \stackrel{q}{\rightarrow}X$
equivalent to $Y \stackrel{p}{\rightarrow}X$, that is, there is a
covering equivalence $\Phi :Y \rightarrow E_{f}$. Therefore,
$\Phi^{*}$ gives an isomorphism from $\mathcal{C}(E_{f})$ to
$\mathcal{C}(Y)$ mapping $q^{*}\mathcal{C}(X)$ onto
$p^{*}\mathcal{C}(X)$. As a result of Proposition~\ref{conti}, $$A(Y/X)\cong A(E_{f}/X) \cong Aut_{p^{*}\mathcal{C}(X)}\mathcal{C}(Y),$$
and $\mathcal{C}(Y)$ is $A(Y/X)$-Galois over $p^{*}\mathcal{C}(X)$.
\end{proof}

\section{Groups as Galois groups}
The inverse Galois problem is a hundred-year old open problem which
asks if every finite group can be realized as some Galois group over
the field of rational numbers. Hilbert used his irreducibility
theorem to realize some groups like $S_n$ and Shafarevich used tools
from number theory to show that every solvable finite group can be
realized. Recently rigidity method is used to realize many finite
groups but there is no way which may realize all finite groups.
Here we provide a new viewpoint from our semi-topological Galois
theory toward the inverse Galois problem.

\begin{proposition}
Let $G$ be any finite group. For any compact disc $D\subseteq \C$,
there exist some disjoint open discs $D_1, ..., D_m\subseteq D$ and
an irreducible Weierstrass polynomial defined on $X=D -
\cup^m_{j=1}D_j$ with semi-topological Galois group $G$.
\end{proposition}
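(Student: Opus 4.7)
The plan is to reduce the problem to constructing a finite connected Galois covering of $X$ with deck group $G$, and then to apply the polynomial-realization results of the previous section.

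First I would choose $m \geq 1$ large enough that $G$ is generated by $m$ elements (any $m$ at least the minimal number of generators of $G$, e.g.\ $m=|G|$, works). Inside the prescribed compact disc $D$ I would place $m$ pairwise disjoint open discs $D_1,\dots,D_m$ in its interior, which is evidently possible once their radii are small enough. The space $X = D - \bigcup_{j=1}^{m} D_j$ is Hausdorff, path-connected, locally path-connected and semi-locally simply connected, and it deformation retracts onto a wedge of $m$ circles, so $\pi_1(X)$ is free of rank $m$.

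Next I would fix an identification $\pi_1(X) \cong F_m$ and a surjection $F_m \twoheadrightarrow G$ sending the free generators to a chosen $m$-element generating set of $G$. The kernel $N$ is normal of finite index, and classical covering space theory produces a finite connected covering $p:(Y,y_0) \to (X,x_0)$ with $p_{*}\pi_1(Y,y_0) = N$; normality of $N$ makes $p$ a Galois covering with $A(Y/X) \cong \pi_1(X)/N \cong G$. Because $\pi_1(X)$ is free, Proposition~\ref{SCC} applies: its proof uses Theorem~\ref{quote} to produce a Weierstrass polynomial $f$ on $X$ whose solution space is equivalent to $Y$, which forces $f$ to be irreducible (since $Y$ is connected, and the solution space of a Weierstrass polynomial is connected iff the polynomial is irreducible), and Corollary~\ref{CorSplit} then identifies this solution space with the splitting covering $E_f$. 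Hence $f$ is an irreducible Weierstrass polynomial on $X$ with $E_f \cong Y$, and in particular $A(E_f/X) \cong G$.

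Finally, Theorem~\ref{FT} applied with $T = R = q^{*}\mathcal{C}(X)$ (its hypotheses being trivially satisfied once Theorem~\ref{delta} is in hand) yields $G_f \cong A(E_f/X) \cong G$, completing the construction. All the substantive work is already packaged in Theorem~\ref{quote} and in the Galois correspondence of Theorem~\ref{FT}; the only genuinely new input is the elementary group-theoretic observation that any finite group is a quotient of a free group of finite rank, which is precisely what allows the planar topology of $X$ to realize $G$ as a deck group. No serious obstacle is anticipated; the only point to verify by hand is that $m$ disjoint open discs fit inside the prescribed compact disc $D$, which is immediate by choosing their radii small enough.
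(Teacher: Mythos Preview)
Your proposal is correct and follows essentially the same route as the paper: realize $G$ as a quotient of a free group $F_m$, choose $X=D-\cup_j D_j$ so that $\pi_1(X)\cong F_m$, take the corresponding finite Galois covering, then invoke Theorem~\ref{quote} and Corollary~\ref{CorSplit} (packaged in Proposition~\ref{SCC}) to realize it as the splitting covering of an irreducible Weierstrass polynomial, and conclude $G_f\cong A(E_f/X)\cong G$ via Theorem~\ref{FT}. Your explicit justification of irreducibility (connectedness of the solution space) is in fact a detail the paper leaves implicit.
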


\begin{proof}
Since $G$ is finite, there exists a finite generated free group $F$
and a normal subgroup $N$ of $F$ such that $$G=F/N.$$ If $F$ is
generated by $m$ elements, then we take $m$ disjoint open discs
$D_1, ..., D_m$ in $D$ such that
$$\pi_{1}(X) \cong F$$ where $X=D-\cup^m_{j=1}D_j$. By the Galois correspondence of covering
spaces, there is a connected covering space
$E\stackrel{p}{\rightarrow} X$ such that
$$N \cong p_{*}\pi_{1}(E) $$ and $$A(E/X) \cong F/N \cong G.$$ Hence
$E \stackrel{p}{\rightarrow} X$ is Galois. By Theorem~\ref{quote},
$E$ is equivalent to the solution space $E'$ of some irreducible
Weierstrass polynomial $f$ defined on $X$. Since $p':E' \rightarrow
X$ is Galois, by Corollary~\ref{CorSplit}, $E'$ is the splitting
covering of $f$. By Theorem~\ref{FT}, the semi-topological Galois
group of $f$ is $G$.
\end{proof}

\begin{corollary}\label{CorIG}
Let $G$ be any finite group and $D$ be a compact disc in $\C$. Then
there exist finite disjoint open discs $D_{1},\cdots ,D_m$ in $D$
and an irreducible Weierstrass polynomial
in $X=D-\cup_{j=1}^{m}D_{j}$ with coefficients in $\Q(i)[x]$ and
semi-topological Galois group $G$.
\end{corollary}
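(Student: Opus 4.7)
I would combine the previous proposition with Corollary~\ref{homotopy} and a two-step approximation. By the previous proposition, there exist disjoint open disks $D_1, \ldots, D_m \subset D$ and an irreducible Weierstrass polynomial $f_0 = z^n + \sum_{i=0}^{n-1} a_i(x) z^i \in \mathcal{C}(X)[z]$ on $X = D - \bigcup_j D_j$ with $G_{f_0} \cong G$; its coefficient map $a : X \to B^n$ classifies the associated cover. My goal is to produce $g \in \Q(i)[x][z]$ whose coefficient map is homotopic to $a$ in $B^n$, because by Corollary~\ref{homotopy} the splitting covers of $f_0$ and $g$ will then coincide and force $G_g \cong G_{f_0} \cong G$.

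The first step replaces $a$ by a coefficient map with entries in $\C[x]$. By Riemann's existence theorem, the cover $E_{f_0} \to X$ extends to a finite branched cover $\widetilde{C} \to \mathbb{P}^1_\C$ realized by an algebraic curve over $\C$, with branch points at the $c_j$ (centers of the $D_j$) and possibly at $\infty$. A primitive element of $\C(\widetilde{C})$ over $\C(x)$ can be chosen to be integral over $\C[x]$ (for instance by clearing denominators in its minimal polynomial), producing a monic polynomial $\widetilde{f}(x,z) \in \C[x][z]$ of degree $n$ whose $z$-discriminant has zeros only at the finitely many branch points in $\C$. After enlarging each $D_j$ as necessary so that $\bigcup_j D_j$ contains all of these zeros, $\widetilde{f}$ becomes a Weierstrass polynomial on $X$ whose solution space is equivalent to that of $f_0$; hence its coefficient map $\widetilde{a}: X \to B^n$ is homotopic to $a$.

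The second step descends from $\C[x]$ to $\Q(i)[x]$ by perturbation. Each $\widetilde{a}_i \in \C[x]$ has finitely many complex coefficients, and since $\Q(i)$ is dense in $\C$ and $X$ is compact, I can choose $b_i \in \Q(i)[x]$ of the same degree with coefficients arbitrarily close to those of $\widetilde{a}_i$. Because the $z$-discriminant $\delta(\widetilde{f})$ is continuous and nonvanishing on the compact set $X$, it is bounded away from zero there, so a sufficiently fine approximation yields a coefficient map $b : X \to \C^n$ that stays inside $B^n$ and is homotopic to $\widetilde{a}$ via the straight-line homotopy. Setting $g = z^n + \sum_i b_i(x) z^i \in \Q(i)[x][z]$ then gives a Weierstrass polynomial on $X$ with coefficients in $\Q(i)[x]$, and Corollary~\ref{homotopy} yields $G_g \cong G$. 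Irreducibility of $g$ is preserved because it is equivalent to connectedness of the solution space, which is stable under a small enough continuous perturbation.

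The main obstacle is the algebraic realization in step one, i.e.\ producing $\widetilde{f} \in \C[x][z]$ whose solution space is equivalent to the cover $E_{f_0} \to X$ from the previous proposition. This rests on Riemann's existence theorem (giving the branched algebraic cover) together with a primitive-element/integrality argument (producing the monic polynomial over $\C[x]$), and must be executed carefully when $\infty$ is itself a branch point. Once step one is in place, step two is a routine compactness-and-density argument based entirely on Corollary~\ref{homotopy}.
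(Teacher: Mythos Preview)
Your approach differs substantially from the paper's. The paper skips your step~1 entirely: it applies the Stone--Weierstrass theorem directly to the continuous coefficients $a_0,\ldots,a_{n-1}$ of the $f$ furnished by the previous proposition, obtaining $\tilde a_j\in\Q(i)[x]$ with $\|\tilde a_j-a_j\|$ so small that the straight-line homotopy from $a$ to $\tilde a$ stays inside $B^n$ (using compactness of $a(X)$ and its positive distance $\varepsilon$ to $Z(\delta)$). Corollary~\ref{homotopy} then gives $G_{\tilde f}\cong G_f\cong G$. In other words, the paper's entire argument is exactly your step~2, but applied to the original continuous coefficient map $a$ rather than to an intermediate algebraic one; no Riemann existence theorem, no branched covers, no primitive elements.

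Your detour through Riemann's existence theorem is heavier machinery but does buy something: it produces coefficients that are genuinely holomorphic polynomials in the complex coordinate $x$, which is what one wants if $\Q(i)[x]$ is read strictly (Stone--Weierstrass for complex-valued functions really gives density of the self-adjoint algebra $\Q(i)[x,\bar x]$, a point the paper glosses over). That said, your step~1 has a soft spot you flag only partially. The $z$-discriminant of the minimal polynomial of a primitive element can vanish away from the branch locus of $\widetilde C\to\P^1$, namely at the singularities of the affine plane model $\widetilde f(x,z)=0$; these extra zeros need not lie near the centers $c_j$, so ``enlarging each $D_j$'' may force overlaps or require additional disks. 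You would then have to check that the cover restricted to the smaller $X$ still has deck group $G$ (it does, since removing further disks from $X$ induces a surjection on $\pi_1$ and the monodromy representation is unchanged, but this deserves a sentence). Alternatively one can choose the primitive element more carefully so that the plane model is smooth over $\C\setminus\{c_1,\ldots,c_m\}$.
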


\begin{proof}
By the above result, there is a Weierstrass polynomial
$f=z^{n}+a_{n-1}z^{n-1}+\cdots +a_{0} \in \mathcal{C}(X)[z]$ such
that $G_{f} \cong G$ for some $X=D-\cup_{j=1}^{N}D_{j}$. Let $a:X
\rightarrow B^n$ be the continuous function defined by
$$a(x)=(a_{0}(x),\cdots ,a_{n-1}(x))$$ Then $a(X)\subset B^{n}$
is compact where $B^n=\C^n-Z(\delta)$ and $\delta$ is the
discriminant polynomial. Since $Z(\delta)$ is closed in $\C^{n}$,
the distance between $a(X)$ and $Z(\delta)$ is $\varepsilon =
d(a(X),Z(\delta))>0$.

By the Stone-Weierstrass theorem, there are $\tilde{a}_{0},\cdots
,\tilde{a}_{n-1} \in \Q(i)[x]$ such that
$$\Vert \tilde{a}_{j}-a_{j}\Vert <  \varepsilon/2n  ,  \; j=0,\cdots ,n-1$$
where $||\tilde{a}_j-a_j||=max_{x \in X} |\tilde{a}_j(x)-a_j(x)|$.
Hence $$||\tilde{a}-a||\leq
\sum^n_{i=1}||\tilde{a}_j-a_j||<\varepsilon/2.$$ Then for any $x\in
X$,
$$d(\tilde{a}(x), Z(\delta))\geq d(a(x), Z(\delta))-d(a(x),
\tilde{a}(x))>\varepsilon-\varepsilon/2=\varepsilon/2.$$

Therefore we have a map $\tilde{a}=(\tilde{a}_{0},\cdots
,\tilde{a}_{n-1}):X \rightarrow B^{n}$ and a Weierstrass polynomial
$\tilde{f}=z^{n}+ \sum_{j=0}^{n-1}\tilde{a}_{j}z^{j}$.

Let
$$H(x,t):=(1-t)a(x)+t\tilde{a}(x)$$ for $t\in [0, 1], x\in X$.
Then $|a(x)-H(x, t)|=t|a(x)-\tilde{a}(x)|<t\varepsilon/2\leq
\varepsilon/2$, so $H:X\times I \rightarrow B^n$ is a homotopy
between $a$ and $\tilde{a}$. Hence Corollary~\ref{homotopy} and
Theorem~\ref{FT} imply that
$$G_{\tilde{f}} \cong G_{f} \cong G.$$
\end{proof}

In the following, we fix $X$ a path-connected subset of $\C$. Let
$E_{f}\stackrel{q}{\rightarrow} X$ be the splitting covering of a
Weierstrass polynomial $f$, and let $\alpha_{1},\cdots ,\alpha_{n}
:E_{f}\rightarrow \C$ be all roots of $f$. We define $$T:=\lbrace
\frac{q^{*}g}{q^{*}h} \mid g, h \in \Q(i)[x], \; h(x) \neq 0 , \;
\forall x \in X \rbrace$$ and $$W:=\lbrace \frac{q^{*}g}{q^{*}h}
\mid g, h \in \Q(i)[x], \; h \neq 0 \rbrace \cong \Q(i)(x).$$

\begin{lemma}\label{CQ}
$$Aut_{T}T[\alpha_{1},\cdots ,\alpha_{n}] \cong Aut_{W}W[\alpha_{1},\cdots ,\alpha_{n}].$$
\end{lemma}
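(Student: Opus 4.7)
The plan is to exhibit the isomorphism via mutually inverse extension and restriction maps between the two automorphism groups, using the fact that $W$ is a localization of $T$. First, I would observe that $T$ is isomorphic via $q^{*}$ to the localization of $\Q(i)[x]$ at polynomials nonvanishing on $X$; this is an integral domain whose fraction field is $\Q(i)(x)\cong W$. Consequently $W=S^{-1}T$, where $S=q^{*}(\Q(i)[x]\setminus\{0\})$, and one may identify $W[\alpha_{1},\cdots,\alpha_{n}]$ with $S^{-1}T[\alpha_{1},\cdots,\alpha_{n}]$, since every element of the former can be written with a common denominator drawn from $S$.

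Next I would construct $\Psi\colon \mathrm{Aut}_{T}T[\alpha_{1},\cdots,\alpha_{n}] \to \mathrm{Aut}_{W}W[\alpha_{1},\cdots,\alpha_{n}]$ by extension. Given $\phi\in \mathrm{Aut}_{T}T[\alpha_{1},\cdots,\alpha_{n}]$, since $\phi$ fixes $T$ pointwise it also fixes $S$, so by the universal property of localization it extends uniquely to a ring automorphism $\tilde{\phi}(a/s):=\phi(a)/s$ of $S^{-1}T[\alpha_{1},\cdots,\alpha_{n}]=W[\alpha_{1},\cdots,\alpha_{n}]$. This $\tilde{\phi}$ fixes $W$ and has inverse $\widetilde{\phi^{-1}}$, so $\tilde{\phi}\in \mathrm{Aut}_{W}W[\alpha_{1},\cdots,\alpha_{n}]$. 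For the restriction map $\Phi$ in the opposite direction, given $\psi\in \mathrm{Aut}_{W}W[\alpha_{1},\cdots,\alpha_{n}]$: the map $\psi$ fixes $W\supset T$ pointwise, and because the coefficients of $f$ lie in $T$, it must permute the roots $\{\alpha_{1},\cdots,\alpha_{n}\}$. Therefore $\psi$ carries $T[\alpha_{1},\cdots,\alpha_{n}]$ into itself, and the analogous statement applied to $\psi^{-1}$ shows the restriction lies in $\mathrm{Aut}_{T}T[\alpha_{1},\cdots,\alpha_{n}]$.

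Finally I would verify that $\Psi$ and $\Phi$ are mutually inverse group homomorphisms: $\Phi\circ\Psi=\mathrm{id}$ is immediate from the construction of $\tilde{\phi}$, while $\Psi\circ\Phi=\mathrm{id}$ follows from the uniqueness clause in the universal property of localization, since any $W$-algebra endomorphism of $W[\alpha_{1},\cdots,\alpha_{n}]=S^{-1}T[\alpha_{1},\cdots,\alpha_{n}]$ is determined by its restriction to $T[\alpha_{1},\cdots,\alpha_{n}]$. The main step requiring care is the identification $W[\alpha_{1},\cdots,\alpha_{n}]=S^{-1}T[\alpha_{1},\cdots,\alpha_{n}]$, which relies on $T$ being an integral domain so that elements of $S$ are nonzerodivisors and localization behaves well; everything else reduces to verifying naturality of localization with respect to ring automorphisms.
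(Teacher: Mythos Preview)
Your proposal is correct and follows essentially the same strategy as the paper: build mutually inverse extension and restriction maps between the two automorphism groups and check they compose to the identity. The only difference is cosmetic---you invoke the universal property of localization to obtain the extension $\phi\mapsto\tilde\phi$, whereas the paper writes the extension explicitly as $\widetilde{\psi}\bigl(\sum_I \frac{a_I}{b_I}\alpha^I\bigr)=\frac{1}{B}\psi\bigl(\sum_I a_I B_I\alpha^I\bigr)$ (with $B=\prod_I b_I$) and verifies well-definedness, the homomorphism property, and bijectivity by direct computation.
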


\begin{proof}
Since $T$ is a subring of $W$, we have a restriction map
$r:Aut_{W}W[\alpha_{1},\cdots ,\alpha_{n}]\rightarrow
Aut_{T}T[\alpha_{1},\cdots ,\alpha_{n}]$ by sending $\varphi$ to
$\varphi \vert_{T[\alpha_{1},\cdots ,\alpha_{n}]}$. We use that
notation $\alpha^I:=\alpha_1^{i_1}\alpha_2^{i_2}\cdots
\alpha_n^{i_n}$ where $I=(i_1, i_2, ..., i_n)$.

For $\psi \in Aut_{T}T[\alpha_{1},\cdots ,\alpha_{n}]$, define
$\widetilde{\psi}:W[\alpha_{1},\cdots ,\alpha_{n}] \rightarrow
W[\alpha_{1},\cdots ,\alpha_{n}]$ by
$$\widetilde{\psi}(\sum_I\frac{a_I}{b_I}\alpha^I)=\frac{1}{B}\psi(\sum_Ia_IB_I\alpha^I)$$
where $B=\prod_Ib_I$ and $B_I=\frac{B}{b_I}$.

\begin{description}
\item[Well defined:] If $\sum_I\frac{a_I}{b_I}\alpha^I=0$, then
$\sum_Ia_IB_I\alpha^I=0$. So
$\frac{1}{B}\psi(\sum_Ia_IB_I\alpha^I)=0$ which implies that
$\widetilde{\psi}(\sum_I\frac{a_I}{b_I}\alpha^I)=0$.

\item[Homomorphism:]
$\widetilde{\psi}((\sum_I\frac{a_I}{b_I}\alpha^I)(\sum_J\frac{c_J}{d_J}\alpha^J))
=
\frac{1}{BD}\psi((\sum_Ia_IB_I\alpha^I)(\sum_Jc_JD_J\alpha^J))=$
\\ $ [\frac{1}{B}\psi(\sum_Ia_IB_I \alpha^I)]
[\frac{1}{D}\psi(\sum_Jc_JD_J\alpha^J)]
=\widetilde{\psi}(\sum_I\frac{a_I}{b_I}\alpha^I)\widetilde{\psi}
(\sum_J\frac{c_J}{d_J}\alpha^J)$ where $D=\prod_Jc_J$ and
$D_J=\frac{D}{c_J}$.

\item[Injective:]
If $\widetilde{\psi}(\sum_I\frac{a_I}{b_I}\alpha^I)=0$, then
$\frac{1}{B}\psi(\sum_Ia_IB_I\alpha^I)=0$, so
$\psi(\sum_Ia_IB_I\alpha^I)=0$. The map $\psi$ is an
automorphism, so $\sum_Ia_IB_I\alpha^I=0$ and hence
$\sum_I\frac{a_I}{b_I}\alpha^I=0$.

\item[Surjective:]
If $\sum_I\frac{a_I}{b_I}\alpha^I\in W[\alpha_1, ...,
\alpha_n]$, then
$\widetilde{\psi}(\sum_I\frac{a_I}{b_I}(\psi^{-1}(\alpha))^I)=\sum_I\frac{a_I}{b_I}\alpha^I$.
\end{description}

Consequently, $\widetilde{\psi} \in Aut_{W}W[\alpha_{1},\cdots
,\alpha_{n}]$.

We obtain a map $\Phi :Aut_{T}T[\alpha_{1},\cdots ,\alpha_{n}]
\rightarrow Aut_{W}W[\alpha_{1},\cdots ,\alpha_{n}]$ defined by
$$\Phi(\psi)=\widetilde{\psi}.$$ Since
$$\Phi(\psi_1\circ
\psi_2)(\sum_I\frac{a_I}{b_I}\alpha^I)=\frac{1}{B}\psi_1\circ
\psi_2(\sum_Ia_IB_I\alpha^I)
=\frac{1}{B}\psi_1(\sum_Ia_IB_I\psi_2(\alpha^I))$$
$$=\widetilde{\psi}_1(\sum_I\frac{a_I}{b_I}\psi_2(\alpha^I))
=\widetilde{\psi}_1(\frac{1}{B}\psi_2(\sum_Ia_IB_I\alpha^I))=\widetilde{\psi}_1\circ
\widetilde{\psi}_2(\sum_I\frac{a_I}{b_I}\alpha^I)=\Phi(\psi_1)\Phi(\psi_2)(\sum_I\frac{a_I}{b_I}\alpha^I),
$$ $\Phi$ is a
group homomorphism. It is clear that $\Phi \circ r = id$ and $r
\circ \Phi = id$, so $\Phi$ is a group isomorphism.
\end{proof}

\begin{proposition}
Suppose $T[\alpha_{1},\cdots ,\alpha_{n}] \cap R=T$ where
$R=q^{*}\mathcal{C}(X)$. Then $G_{f}$ occurs as a Galois group of a
Galois extension of $\Q(i)$.
\end{proposition}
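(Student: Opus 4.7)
The plan is to chain together (a) the isomorphism $G_f\cong Aut_TT[\alpha_1,\ldots,\alpha_n]$ from Theorem~\ref{FT}, (b) the algebraic isomorphism $Aut_TT[\alpha_1,\ldots,\alpha_n]\cong Aut_WW[\alpha_1,\ldots,\alpha_n]$ of Lemma~\ref{CQ} together with the identification $W\cong\Q(i)(x)$, and (c) Hilbert's irreducibility theorem to descend from the function field $\Q(i)(x)$ down to $\Q(i)$.

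First I would verify the two hypotheses of Theorem~\ref{FT} for the subring $T$. Hypothesis~(1) is precisely the standing assumption. For hypothesis~(2), I would invoke Corollary~\ref{CorIG} to arrange that the coefficients of $f$ lie in $\Q(i)[x]\subset T$; then $\Delta^2$, being a symmetric polynomial in $\alpha_1,\ldots,\alpha_n$, equals a fixed polynomial expression in the coefficients of $f$ and hence lies in $T$, and by Lemma~\ref{LemC} it is also nonvanishing on $E_f$, so $1/\Delta^2\in T$ and $1/\Delta=\Delta/\Delta^2\in T[\alpha_1,\ldots,\alpha_n]$. Theorem~\ref{FT} then yields $G_f\cong Aut_TT[\alpha_1,\ldots,\alpha_n]$, and chaining with Lemma~\ref{CQ} produces $G_f\cong Aut_WW[\alpha_1,\ldots,\alpha_n]$.

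Since $W$ is a field and $W[\alpha_1,\ldots,\alpha_n]$ is $G_f$-Galois over $W$ in the Chase--Harrison--Rosenberg sense, the standard structure theorem decomposes it as a product $W[\alpha_1,\ldots,\alpha_n]\cong K^m$ of $m$ copies of a single finite Galois field extension $K$ of $W$, on which $G_f$ acts by permuting the $m$ components transitively with stabilizer isomorphic to $\mathrm{Gal}(K/W)$. The hypothesis $T[\alpha_1,\ldots,\alpha_n]\cap R=T$, combined with the fact that $T[\alpha_1,\ldots,\alpha_n]$ embeds in the connected ring $\mathcal{C}(E_f)$ (hence has no non-trivial idempotents), is what I would use to rule out a non-trivial product decomposition after localization, forcing $m=1$; then $K$ is a classical Galois field extension of $\Q(i)(x)$ whose Galois group is identified with $G_f$.

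Finally, because $\Q(i)$ is a number field and hence Hilbertian, Hilbert's irreducibility theorem supplies infinitely many specializations $x\mapsto c\in\Q(i)$ whose residue extensions are Galois extensions of $\Q(i)$ with Galois group still isomorphic to $G_f$, realizing $G_f$ as the Galois group of a Galois extension of $\Q(i)$. I expect the main obstacle to be the connectedness step in the preceding paragraph: localizations of connected rings can in general introduce idempotents, so extracting from $T[\alpha_1,\ldots,\alpha_n]\cap R=T$ the conclusion that $W[\alpha_1,\ldots,\alpha_n]$ itself admits no non-trivial idempotents (so is actually a field rather than a proper product of isomorphic Galois fields) is the delicate point. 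The remaining steps are either direct applications of results already proved in the paper or standard consequences of the Hilbertian property of $\Q(i)$.
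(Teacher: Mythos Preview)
Your overall architecture (Theorem~\ref{FT} $\Rightarrow$ Lemma~\ref{CQ} $\Rightarrow$ Hilbertian descent) matches the paper's exactly, but you diverge from the paper in two places, and in both the paper takes a shorter path.

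\textbf{Getting $1/\Delta\in T[\alpha_1,\ldots,\alpha_n]$.} You appeal to Corollary~\ref{CorIG} to ``arrange'' that the coefficients of $f$ lie in $\Q(i)[x]$; but $f$ is fixed in the statement, so you cannot alter it. The paper instead uses the hypothesis itself: $\Delta$ is an integer polynomial in the $\alpha_i$, so $\Delta^2\in T[\alpha_1,\ldots,\alpha_n]$, while the proof of Theorem~\ref{delta} already shows $\Delta^2\in R$; hence the assumption $T[\alpha_1,\ldots,\alpha_n]\cap R=T$ forces $\Delta^2\in T$. Writing $\Delta^2=q^*g/q^*h$ with $h$ nonvanishing on $X$, Lemma~\ref{LemC} gives $\Delta^2$ nonvanishing, so $g$ is nonvanishing on $X$ and $1/\Delta^2=q^*h/q^*g\in T$. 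Thus $1/\Delta=\Delta/\Delta^2\in T[\alpha_1,\ldots,\alpha_n]$, and Theorem~\ref{FT} applies. No symmetric-function computation or modification of $f$ is needed.

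\textbf{Passing to a Galois field extension of $\Q(i)(x)$.} You route this through the Chase--Harrison--Rosenberg structure theorem, decompose $W[\alpha_1,\ldots,\alpha_n]\cong K^m$, and then try to force $m=1$ via a connectedness argument that you yourself flag as delicate. The paper bypasses all of this: it simply observes that $W\cong\Q(i)(x)$ is a field, $f\in W[z]$, and $W[\alpha_1,\ldots,\alpha_n]$ is generated over $W$ by a full set of roots of $f$, so it \emph{is} the splitting field of $f$ over $W$ and hence a Galois field extension with group $Aut_W W[\alpha_1,\ldots,\alpha_n]\cong G_f$. Your idempotent worry is legitimate in the abstract, but here the concrete realization of the $\alpha_i$ as functions on the connected space $E_f$ (so that $T[\alpha_1,\ldots,\alpha_n]$ sits inside a ring with no nontrivial idempotents) is what makes the splitting-field identification immediate; the paper uses this implicitly rather than invoking any product decomposition. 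With that recognized, the Hilbertian step is exactly as you describe.
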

\begin{proof}
Since $\Delta^2\in R$ and $\Delta\in T[\alpha_1, ..., \alpha_n]$, so
$\Delta^2\in T$ by assumption. So $\Delta^2=\frac{q^*g}{q^*h}$ for
some $g, h\in \Q(i)[x]$ and $h(x)\neq 0$ for $x\in X$. By Lemma
\ref{LemC}, $\frac{1}{\Delta^2}=\frac{q^*h}{q^*g}\in R$, so
$g(x)\neq 0$ for all $x\in X$. This implies $\frac{1}{\Delta^2}\in
T$. Then $\frac{1}{\Delta}=\frac{\Delta}{\Delta^2}\in T[\alpha_1,
..., \alpha_n]$. By Theorem \ref{FT},
$$Aut_{T}(T[\alpha_{1},\cdots,\alpha_{n}]) \cong A(E/X) \cong G_{f}$$
Lemma \ref{CQ} implies that
$$Aut_{W}(W[\alpha_{1},\cdots ,\alpha_{n}]) \cong G_{f}.$$
Moreover, $W[\alpha_{1},\cdots ,\alpha_{n}]$ is the splitting field
of $f \in W[z]$, and hence, $W[\alpha_{1},\cdots ,\alpha_{n}]$ is a
Galois extension of $W$. Since $W \cong \Q(i)(x)$ and $\Q(i)$ is
Hilbertian, $G_{f}$ occurs as a Galois group of certain Galois
extension $L$ of $\Q(i)$ as wished (see \cite{Vo}).
\end{proof}

\begin{acknowledgement}
The authors thank the National Center of Theoretical Sciences of
Taiwan (Hsinchu) for providing a wonderful working environment.
\end{acknowledgement}

\bibliographystyle{amsplain}

Authors' addresses:

Hsuan-Yi Liao, Department of Mathematics, National Tsing Hua
University of Taiwan, Hsinchu, 30043, Taiwan. Email:
s9821502@m98.nthu.edu.tw

Jyh-Haur Teh, Department of Mathematics, National Tsing Hua
University of Taiwan, Hsinchu, 30043, Taiwan. Email:
jyhhaur@math.nthu.edu.tw

\end{document}